\theoremstyle{plain}
\newtheorem{thm}{Theorem}[section]
\newtheorem{cor}[thm]{Corollary}
\newtheorem{prp}[thm]{Proposition}
\newtheorem{lem}[thm]{Lemma}
\theoremstyle{definition}
\newtheorem{dfn}[thm]{Definition}
\newtheorem{rmk}[thm]{Remark}
\numberwithin{equation}{section}
\newcommand{\mi}{\mu}
\newcommand{\N}{\mathbb{N}}
\newcommand{\Z}{\mathbb{Z}}
\newcommand{\Q}{\mathbb{Q}}
\newcommand{\R}{\mathbb{R}}
\newcommand{\id}{\mathrm{id}}
\newcommand{\D}{\mathbb{D}}
\newcommand{\X}{\mathfrak{X}}
\newcommand{\I}{\mathfrak{I}}
\newcommand{\Hr}{\mathscr{H}}
\newcommand{\E}{\mathcal{E}}
\newcommand{\nr}{\mathrm{nr}}
\newcommand{\an}{\mathrm{an}}
\newcommand{\rig}{\mathrm{rig}}
\newcommand{\ad}{\mathrm{ad}}
\newcommand{\pdiv}{p\mathrm{-div}}
\newcommand{\pgr}{p\mathrm{-gr}}
\renewcommand{\phi}{\varphi}
\renewcommand{\epsilon}{\varepsilon}
\renewcommand{\O}{\mathcal{O}}
\renewcommand{\k}{\mathit{k}}
\renewcommand{\int}{\mathrm{int}}
\renewcommand{\H}{\mathcal{H}}
\renewcommand{\r}{\mathrm{r}}
\DeclareMathOperator{\Ker}{Ker}
\DeclareMathOperator{\Spec}{Spec}
\DeclareMathOperator{\Spf}{Spf}
\DeclareMathOperator{\End}{End}
\DeclareMathOperator{\Hom}{Hom}
\DeclareMathOperator{\height}{ht}
\DeclareMathOperator{\length}{lg}
\DeclareMathOperator{\rk}{rk}
\DeclareMathOperator{\Fitt}{Fitt}
\DeclareMathOperator{\Newt}{Newt}
\DeclareMathOperator{\Hdg}{Hdg}
\DeclareMathOperator{\PR}{PR}
\DeclareMathOperator{\HN}{HN}
\title{The integral Hodge polygon for $p$-divisible groups \\ with endomorphism structure}
\author{Stéphane Bijakowski, Andrea Marrama}
\date{}
\begin{document}

\maketitle

\paragraph{Abstract.}
Let $p$ be a prime number,
let $\O_F$ be the ring of integers of a finite field extension~$F$ of $\Q_p$ and
let $\O_K$ be a complete valuation ring of rank $1$ and mixed characteristic~$(0,p)$.
We introduce and study the \emph{integral Hodge polygon},
a new invariant of $p$-divisible groups~$H$ over $\O_K$ endowed with an action~$\iota$ of $\O_F$.
If $F|\Q_p$ is unramified, this invariant recovers the classical Hodge polygon and
only depends on the reduction of $(H,\iota)$ to the residue field of $\O_K$.
This is not the case in general, whence the attribute~``integral''.
The new polygon lies between
Fargues' Harder-Narasimhan polygons of the $p$-power torsion parts of $H$ and
another combinatorial invariant of $(H,\iota)$ called the Pappas-Rapoport polygon.
Furthermore,
the integral Hodge polygon behaves continuously in families over a $p$-adic analytic space.

\tableofcontents

\section{Introduction}

Let $p$ be a prime number.
When one studies the geometry of the modular curve over a base ring of mixed characteristic~$(0,p)$,
one often considers not the universal elliptic curve~$E$,
but rather its $p$-divisible group~$E[p^\infty]$.
Indeed, Serre-Tate theory ensures that
deforming an elliptic curve in positive characteristic amounts to
deforming the associated $p$-divisible group.
More generally, when studying the $p$-adic geometry of Shimura varieties
(such as the Hilbert modular variety or the Siegel modular variety),
one is especially interested in the $p$-divisible group associated to the universal abelian scheme.

\smallskip\noindent
As far as the special fibre is concerned, several invariants have been attached to
a $p$-divisible group~$H$ over an algebraically closed field of characteristic $p$.
The \emph{Newton polygon}~$\Newt(H)$ classifies the isogeny class of $H$ and
leads to the Newton stratification of Shimura varieties.
The \emph{Hodge polygon}~$\Hdg(H)$ is determined by the dimension of $H$ and
always lies above the Newton polygon (all polygons are concave in this article).

\smallskip\noindent
In the context of Shimura varieties, however,
one is led to consider objects endowed with additional structure,
such as an \emph{endomorphism structure}.
Let $\O_F$ be the ring of integers of a finite field extension~$F$ of $\Q_p$ (possibly ramified) and
assume that $H$ comes with an action~$\iota$ of $\O_F$.
One can then refine the previous invariants and define polygons~$\Newt(H,\iota)$ and $\Hdg(H,\iota)$,
taking this action into account (see \cite[\S 1]{BH1}).
The polygon~$\Newt(H,\iota)$ is just a renormalisation of $\Newt(H)$,
but $\Hdg(H,\iota)$ is genuinely different from $\Hdg(H)$.

\smallskip\noindent
When $F|\Q_p$ is ramified,
the necessity of a good integral model of the Shimura variety suggests to
impose a \emph{Pappas-Rapoport condition} on $(H,\iota)$,
encoding the action of $\O_F$ on the module~$\omega_H$ of invariant differential forms of $H$
(see \cite[Définition~2.2.1]{BH1}).
This condition is based on a fixed combinatorial datum~$\mi$, which in turn determines a new polygon,
the \emph{Pappas-Rapoport polygon}, lying above $\Newt(H,\iota)$.
It is then proved in \cite[Théorème~1.3.1]{BH1} that
$\Hdg(H,\iota)$ lies between $\Newt(H,\iota)$ and the Pappas-Rapoport polygon associated to $\mi$;
its variation throughout the reduction of Shimura varieties is used to
study the geometry of the latter (cf.\ \cite{BH2}).

\smallskip\noindent
Turning now our attention to the generic fibre (after $p$-adic completion),
the objects of interest become $p$-divisible groups with endomorphism structure~$(H,\iota)$ as above,
but defined over a complete valuation ring~$\O_K$ of rank 1 and mixed characteristic $(0,p)$.
In this case, the datum $\mi$ is determined by $(H,\iota)$ itself and
the reduction~$(H_\k,\iota)$ of $(H,\iota)$ to the residue field~$\k$ of $\O_K$ naturally inherits
a compatible Pappas-Rapoport condition, see \S\ref{S-Hdg-PR}.
The associated Pappas-Rapoport polygon has already been considered in \cite{AM},
where it is referred to as the ``Hodge polygon'' of $(H,\iota)$.
Actually, if $F|\Q_p$ is ramified,
this polygon may be different from the Hodge polygon of $(H_\k,\iota)$ mentioned above.
In order to avoid any confusion and in accordance with the previous discussion,
here we write $\PR(H,\iota)$ for the Pappas-Rapoport polygon associated to
the datum~$\mi$ determined by $(H,\iota)$ and call this the ``Pappas-Rapoport polygon'' of $(H,\iota)$.

\smallskip\noindent
Other invariants of $(H,\iota)$ to be considered stem from
the Harder-Narasimhan theory for finite locally free group schemes developed by Fargues in \cite{Fa1}.
Namely, the theory allows one to attach a \emph{Harder-Narasimhan polygon}~$\HN(H[p^i],\iota)$ to
each $p^i$-torsion part of $(H,\iota)$.
These polygons contain information about certain finite locally free sub-group-schemes of $H$ and
converge from above to an invariant~$\HN(H,\iota)$ of the whole $p$-divisible group
(the presence of $\iota$ only accounts for a renormalisation of the polygons in this case);
an application to the $p$-adic geometry of Shimura varieties may be found in \cite{Fa3}.
Under the assumption that $\O_K$ is discretely valued,
it is proved in \cite[Proposition~3.14]{AM} that $\HN(H[p],\iota)$,
and along with it the other Harder-Narasimhan polygons~$\HN(H[p^i],\iota)$,
lie below $\PR(H,\iota)$.

\bigskip\noindent
In this article, we define the \emph{integral Hodge polygon}~$\Hdg^\int(H,\iota)$ of
a $p$-divisible group with endomorphism structure $(H,\iota)$ over $\O_K$,
a new invariant describing the action of a uniformiser $\pi$ of $\O_F$ on $\omega_H$.
If $F|\Q_p$ is unramified,
this polygon only depends on the reduction of $(H,\iota)$ to $\k$ and recovers $\Hdg(H_\k,\iota)$.
This is not the case in general, whence the attribute ``integral''.
The basic feature of the integral Hodge polygon is that
it lies between $\HN(H[p],\iota)$ and $\PR(H,\iota)$.
\begin{thm}[Corollary~\ref{Hdgi-HN}, Theorem~\ref{Hdgi-PR}]
Let $(H,\iota)$ be a $p$-divisible group over $\O_K$ with endomorphism structure for $\O_F$.
Then:
\[
 \HN(H[p],\iota)\le\Hdg^\int(H,\iota)\le\PR(H,\iota).
\]
\end{thm}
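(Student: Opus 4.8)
The plan is to reduce both inequalities to per-embedding statements about the module $\omega_H$ of invariant differentials, regarded as a module over $\O_F\otimes_{\Z_p}\O_K$ on which the fixed uniformiser $\pi$ acts $\O_K$-linearly. After base-changing $\O_K$ so that it contains a copy of the maximal unramified subextension of $F$, one decomposes $\omega_H=\bigoplus_\tau\omega_\tau$ along the embeddings $\tau$ of that subextension; each $\omega_\tau$ is finite free over $\O_K$ and carries the $\O_K$-linear endomorphism $\phi_\tau$ induced by $\pi$, subject to the Eisenstein relation $E_\tau(\phi_\tau)=0$. I would first check that $\Hdg^\int(H,\iota)$, $\PR(H,\iota)$ and $\HN(H[p],\iota)$ all have the same endpoints, fixed by the $\O_F$-height and the dimension of $(H,\iota)$, so that the problem is genuinely about the intermediate breakpoints and may be treated one $\tau$ at a time before reassembling with the normalisation by $e$ and $v(p)$ common to the three polygons.

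For the upper inequality $\Hdg^\int(H,\iota)\le\PR(H,\iota)$, which is the substantive assertion, I would use the Pappas-Rapoport condition directly. For each $\tau$ it supplies an exhaustive filtration $0=F^{0}\subseteq F^{1}\subseteq\cdots\subseteq F^{e}=\omega_\tau$ by $\O_K$-submodules, stable under $\phi_\tau$, whose successive quotients are $\O_K$-free of the ranks prescribed by the datum $\mi$ and on which $\pi$ acts through a single root of $E_\tau$; the polygon $\PR(H,\iota)$ is exactly the concave polygon read off from this combinatorial shape. The content is then the module-theoretic comparison: an $\O_K$-linear endomorphism preserving such a filtration and acting by a scalar on each graded piece has elementary divisors whose associated concave polygon lies on or below the one determined by the filtration. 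I would establish this by passing to exterior powers of $\phi_\tau$ and comparing Fitting ideals of the cokernels: the $k$-th breakpoint of $\Hdg^\int$ is governed by a Fitting ideal of $\operatorname{coker}(\wedge^{k}\phi_\tau)$, and the filtration exhibits a distinguished minor---the product of the scalars on a well-chosen union of graded pieces---whose valuation realises the corresponding $\PR$ ordinate, forcing the desired divisibility.

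For the lower inequality $\HN(H[p],\iota)\le\Hdg^\int(H,\iota)$ I would argue that it is a corollary of Fargues' foundational inequality for finite locally free group schemes. The first step is to identify $\Hdg^\int(H,\iota)$ with the $\O_F$-equivariant Hodge polygon of the single group scheme $H[p]$, computed from $\omega_{H[p]}$: since $\pi$ divides $p$ on $\omega_H$, passing to $\omega_{H[p]}\cong\omega_H/p\omega_H$ does not disturb the relevant invariants of the $\pi$-action, so both polygons are read from the same data. Fargues' degree of an $\O_F$-stable subgroup $G\subseteq H[p]$ is the valuation of $\Fitt_0\,\omega_G$, and $\omega_G$ is an equivariant quotient of $\omega_{H[p]}$; the concave hull of the points $(\height_{\O_F}G,\deg G)$ is by definition $\HN(H[p],\iota)$, and each such point lies below the equivariant Hodge polygon by Fargues' inequality $\HN\le\Hdg$ for finite flat group schemes. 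Once the identification of the previous step is in place, this yields the claim with no further work, which is why it is stated as a corollary.

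I expect the principal difficulty to reside in the majorization underlying the upper bound, and precisely in making it unconditional over an arbitrary rank-one valuation ring $\O_K$. Over a discrete valuation ring one could diagonalise $\phi_\tau$ by Smith normal form and read off the inequality from an adapted basis compatible with the Pappas-Rapoport filtration; for a general, possibly non-discretely-valued $\O_K$ the filtration need neither split nor arise from a free $\O_K[\pi]/E_\tau$-structure, so I would run the entire argument through Fitting ideals and exterior powers, where finite presentation suffices and no basis is required. A secondary but real check is bookkeeping: verifying that the per-$\tau$ inequalities, once rescaled by $e$ and by $v(p)$, assemble into a genuine inequality of concave polygons with matching endpoints, so that the intermediate comparison with $\PR(H,\iota)$ and the Fargues comparison with $\HN(H[p],\iota)$ take place on the same normalised picture.
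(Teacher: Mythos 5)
Your overall architecture (decomposition along the embeddings $\upsilon$ of $F^\nr$, matching of endpoints, reduction to per-embedding statements via Fitting ideals, and realising $\Hdg^\int$ as an invariant of the $p$-torsion level) parallels the paper's, but each of the two inequalities has a genuine gap at its key step.

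For $\Hdg^\int(H,\iota)\le\PR(H,\iota)$, the filtration you invoke does exist over $\O_K$ (this is the content of the paper's Lemma~\ref{Hdg-PR-lem} and \S\ref{S-Hdg-PR}, so it already requires an argument), but your ``distinguished minor'' step runs in the wrong direction. Writing $\omega_{H,\upsilon}/[\pi]\omega_{H,\upsilon}\cong\bigoplus_{j}\O_K/a_j\O_K$ with $v(a_1)\ge\dots\ge v(a_n)$, one has $\Hdg^\int(H,\iota)_\upsilon(k)=\sum_{j\le k}v(a_j)=v(\Fitt_0)-v(\Fitt_k)$, and $v(\Fitt_k)$ is the \emph{minimum} of the valuations of all $(n-k)\times(n-k)$ minors of $[\pi]$. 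So an upper bound on $\Hdg^\int_\upsilon(k)$ requires a \emph{lower} bound on the valuation of \emph{every} such minor; exhibiting one distinguished minor whose valuation realises the $\PR$ ordinate only bounds the Fitting ideal from above, i.e.\ it bounds $\Hdg^\int_\upsilon(k)$ from \emph{below} and proves (at best) the reverse inequality. The same inversion persists in the annihilator formulation: $\sum_{j\le k}v(a_j)$ generates $\operatorname{Ann}(\operatorname{coker}\wedge^k[\pi])$, so one must produce an explicit element $c$ of the prescribed valuation together with a proof of the containment $c\cdot\wedge^k\omega_{H,\upsilon}\subseteq\operatorname{im}(\wedge^k[\pi])$; no single minor gives this, and the natural Cayley--Hamilton-type annihilator (product of the distinct eigenvalues of $\wedge^k[\pi]\otimes K$) is already too weak for $e=2$, $k=2$, $(r_1,r_2)=(3,1)$, where it yields $2$ instead of the required $3/2$. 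The paper supplies exactly the missing mechanism: for every subset $I$ of embeddings above $\upsilon$, the direct summand $\omega_I=\omega_{H,\upsilon}\cap\bigoplus_{\tau\in I}\omega_{H,K,\tau}$ is free of rank $r_I$, its quotient $\omega_I/[\pi]\omega_I$ is killed by $\rho_I=\prod_{\tau\in I}\tau(\pi)$ (the second, eigenvalue-theoretic half of Lemma~\ref{Hdg-PR-lem}, which uses more than the filtration), and the elementary-divisor comparison Lemma~\ref{Hdgi-PR-lem} for the injection $\omega_I/[\pi]\omega_I\hookrightarrow\omega_{H,\upsilon}/[\pi]\omega_{H,\upsilon}$ converts this into the upper bound.

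For $\HN(H[p],\iota)\le\Hdg^\int(H,\iota)$, the gap is the citation. Fargues' inequality $\HN\le\Hdg$ for a $p$-torsion group scheme concerns the Hodge polygon built from $\omega$ as a bare $\O_K$-module, blind to the $\O_F$-action. For $H[p]$ that polygon comes from $\omega_{H[p]}\cong\omega_H/p\omega_H\cong(\O_K/p\O_K)^{\dim H}$, hence after renormalisation it is the maximal polygon with slopes in $\{0,1\}$ and endpoint $\dim H/d$, which lies \emph{above} $\Hdg^\int(H,\iota)$ (strictly so in the ramified case); citing it therefore yields a strictly weaker statement than the one to be proved, and the equivariant refinement you attribute to Fargues is precisely the content at stake. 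The correct identification is not with $H[p]$ but with $H[\pi]=\Ker\iota(\pi)$: since $\omega_{H[\pi]}\cong\omega_H/[\pi]\omega_H$, the polygon $\Hdg^\int(H,\iota)$ \emph{is} the Hodge polygon of the $p$-group $(H[\pi],\iota^\nr)$. The paper then proves $\HN(H[\pi],\iota^\nr)\le\Hdg^\int(H,\iota)$ (Proposition~\ref{Hdgi-Hpi}, following Shen: surjectivity of $\omega_{H[\pi],\upsilon}\to\omega_{H',\upsilon}$ for HN steps $H'$ plus a count of generators per $\upsilon$), and finally deduces $\HN(H[p],\iota)\le\HN(H[\pi],\iota^\nr)$ by repeated application of Fargues' result on HN polygons in short exact sequences to $0\to H[\pi^j]\to H[\pi^i]\to H[\pi^{i-j}]\to 0$. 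Your sketch never introduces $H[\pi]$ and omits this exact-sequence comparison, which is not formal; without it, even a correct equivariant inequality for $H[\pi]$ does not transfer to $H[p]$.
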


\noindent
This refines and generalises the inequality~$\HN(H[p],\iota)\le\PR(H,\iota)$ obtained in
\cite[Proposition~3.14]{AM} for $\O_K$ discretely valued.
More conceptually, this result tells that
the presence of additional (ramified) endomorphism structure on $H$ produces
a constraint on $\HN(H[p],\iota)$,
with consequences on the possible subobjects of $H$, see Remark~\ref{Hdgi-HN-rmk}.

\smallskip\noindent
The integral Hodge polygon of $(H,\iota)$ is in general unrelated to
the Newton polygon and the Hodge polygon of $(H_\k,\iota)$, see \S\ref{S-ex}.
An exception is the limit case when $\Hdg^\int(H,\iota)=\PR(H,\iota)$,
which happens if and only if $\Hdg(H_\k,\iota)=\PR(H,\iota)$, see Proposition~\ref{max}.
This situation is realised for instance when $\O_F$ acts on $\omega_H$ through
a fixed embedding~$\O_F\to\O_K$, that is, when $(H,\iota)$ is a $p$-divisible \emph{$\O_F$-module}.
The case of $\O_F$-modules also falls within a class of
objects which we call \emph{$\pi$-diagonalisable},
meaning that the action of $\pi$ on $\omega_H$ is diagonalisable.
This condition is detected by $\Hdg^\int(H,\iota)$ if $F|\Q_p$ is tamely ramified and
it always implies that $\Hdg^\int(H,\iota)=\Hdg(H_\k,\iota)$, see \S\ref{S-pid}.

\smallskip\noindent
Another notable feature of the integral Hodge polygon,
especially in view of geometric applications, is that it behaves continuously in families.
\begin{thm}[Theorem~\ref{cont}]
 Let $\X$ be a formal scheme as in \S\ref{S-cont} and
 denote by $\X^\an$ its generic fibre as a Berkovich analytic space.
 Let $(\H,\iota)$ be a $p$-divisible group over $\X$ with endomorphism structure for $\O_F$ and
 suppose that $\H$ has constant height.
 Then, the function~$\Hdg^\int(\H,\iota)$ from $\X^\an$ to the space of polygons is continuous.
 Moreover, for every fixed polygon $f_0$, the subset:
 \[
  \X^\an_{\Hdg^\int\le f_0}\coloneqq\Set{x\in\X^\an|\Hdg^\int(\H_x,\iota)\le f_0}\subseteq\X^\an
 \]
 defines a closed analytic domain of $\X^\an$.
\end{thm}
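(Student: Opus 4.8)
The plan is to reduce the statement to the continuity of finitely many functions of the form $x\mapsto-\log|f(x)|$, for suitable analytic functions $f$ on $\X^\an$, and then to invoke the standard facts that such functions are continuous and that the loci they cut out are closed analytic domains. The starting point is the description of the integral Hodge polygon recalled in its definition: at each point $x\in\X^\an$, writing $\Hr(x)$ for the completed residue field and $\Hr(x)^\circ$ for its valuation ring, the value of $\Hdg^\int(\H_x,\iota)$ at every break point is governed by the elementary divisors of the action of $\pi$ on $\omega_{\H_x}$, equivalently by the Fitting ideals of the $\Hr(x)^\circ$-module $\operatorname{coker}(\pi\colon\omega_{\H_x}\to\omega_{\H_x})$, refined according to the fixed Pappas-Rapoport combinatorial type, which is locally constant on $\X^\an$.

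First I would localise. As $\H$ has constant height, $\omega_{\H}$ is a locally free $\O_{\X}$-module of constant rank, so after replacing $\X$ by an affine formal open $U=\Spf(A)$ I may assume $\omega_{\H}|_U$ is free of rank $d$, with the action of $\pi$ given by a $d\times d$ matrix $M$ with entries in $A$. The crucial point is that the formation of $\omega_{\H}$ and of the $\pi$-action commutes with base change to each point: specialising $M$ along $A\to\Hr(x)^\circ$ recovers exactly the matrix of $\pi$ on $\omega_{\H_x}$. Since Fitting ideals commute with base change and $\Hr(x)^\circ$ is a rank-$1$ valuation ring, each ideal $\Fitt_{d-j}(\operatorname{coker}M)\cdot\Hr(x)^\circ$ is principal, generated by the $j\times j$ minors of $M(x)$, with valuation $\min_\alpha v_x(m_\alpha)$, where the $m_\alpha\in A$ are the fixed $j\times j$ minors of $M$ and $v_x(f)=-\log|f(x)|$. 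In this way every partial sum of $\Hdg^\int(\H_x,\iota)$ is exhibited, up to the locally constant total, as a finite minimum $\min_\alpha v_x(m_\alpha)$ of valuations of fixed analytic functions.

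Continuity is then immediate: each minor $m_\alpha$ defines an analytic function on $U^\an$, the map $x\mapsto|m_\alpha(x)|$ is continuous by the very definition of the Berkovich topology, and a finite minimum of continuous functions is continuous; hence every break value of $\Hdg^\int(\H,\iota)$, and therefore the polygon itself, varies continuously. For the second assertion, concavity together with the common endpoint—shared with the locally constant polygon $\PR(\H_x,\iota)$ by the sandwiching result and the constant-height hypothesis—reduces the condition $\Hdg^\int(\H_x,\iota)\le f_0$ to finitely many inequalities on partial sums, each reading $\min_\alpha v_x(m_\alpha)\ge c$ for a real constant $c$ read off from $f_0$; that is, $|m_\alpha(x)|\le\rho$ for all $\alpha$, with $\rho=\rho(c)\in\R_{>0}$. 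Each such system is a Weierstrass condition, hence defines a closed analytic domain of $U^\an$; intersecting over the finitely many relevant indices and glueing over a finite affine cover of $\X$ shows that $\X^\an_{\Hdg^\int\le f_0}$ is a closed analytic domain.

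The main obstacle I anticipate is not the topology but the claim that $\Hdg^\int$ is genuinely computed by the minors of a \emph{single} integral matrix $M$ over $A$, uniformly across all of $U^\an$—including the non-discretely-valued points, which form the bulk of $U^\an$ and lie outside the discretely-valued framework of \cite{AM}. Securing this uniformity amounts to checking that the integral module $\omega_{\H}$ and, more delicately, the Pappas-Rapoport filtration it carries specialise correctly at every rank-$1$ point, so that the pointwise definition of $\Hdg^\int(\H_x,\iota)$ really agrees with the Fitting-ideal recipe applied to $M(x)$. The base-change compatibility of $\omega_{\H}$ and the local freeness granted by constant height make the bare $\pi$-action behave well; the delicate part is propagating this to the \emph{filtered} invariant, and thereby confirming that the refinement by the locally constant Pappas-Rapoport type does not disturb continuity.
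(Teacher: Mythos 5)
There is a genuine gap, and it lies exactly where you did \emph{not} anticipate it: your argument computes the Fitting ideals of the cokernel of $[\pi]$ on the \emph{whole} module $\omega_{\H}$, via the minors of a single matrix $M$, but the integral Hodge polygon is not determined by that cokernel. By definition, $\Hdg^\int(\H_x,\iota)=\frac{1}{f(F|\Q_p)}\sum_\upsilon\Hdg^\int(\H_x,\iota)_\upsilon$, where $\upsilon$ runs over the embeddings of $F^\nr$ and each summand is computed from the Fitting ideals of $\omega_{\H_x,\upsilon}/[\pi]\omega_{\H_x,\upsilon}$ \emph{separately}; averaging the per-component polygons is not any rescaling of the polygon of the total cokernel $\omega_{\H_x}/[\pi]\omega_{\H_x}$, because the latter re-sorts all elementary divisors across the components. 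Concretely, take $F|\Q_p$ unramified quadratic, so $\pi=p$, $f(F|\Q_p)=2$, $d=2$, and compare two objects of height $4$ and dimension $2$ over $\O_K$: one with $\rk_{\O_K}\omega_{H,\upsilon_1}=2$, $\rk_{\O_K}\omega_{H,\upsilon_2}=0$ (e.g.\ the square of a Lubin--Tate formal $\O_F$-module), the other with $\rk_{\O_K}\omega_{H,\upsilon_1}=\rk_{\O_K}\omega_{H,\upsilon_2}=1$ (e.g.\ the Serre tensor $H_0\otimes_{\Z_p}\O_F$ of a formal group $H_0$ of height $2$ and dimension $1$). In both cases $\omega_H/[\pi]\omega_H\cong(\O_K/p\O_K)^2$, so your minors $m_\alpha$ of $M$ take the same values on both; yet $\Hdg^\int=(1/2,1/2)$ in the first case and $(1,0)$ in the second. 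Hence the step ``every partial sum of $\Hdg^\int(\H_x,\iota)$ is exhibited \dots as $\min_\alpha v_x(m_\alpha)$'' fails whenever $f(F|\Q_p)>1$ (it is correct for totally ramified $F$). Your closing worry about the Pappas--Rapoport filtration is, by contrast, a red herring: $\Hdg^\int$ never involves the PR filtration or the generic-fibre type $(r_\tau)_\tau$, only the unramified decomposition just described.

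The fix is straightforward and turns your argument into the paper's proof. Since $E\supseteq F^\nr$, one has $\O_{F^\nr}\otimes_{\Z_p}\O_\X\cong\prod_\upsilon\O_\X$, so the integral sheaf decomposes as $\omega_\H=\bigoplus_\upsilon\omega_{\H,\upsilon}$ into finite locally free $\O_\X$-modules, and this decomposition specialises at every point $x\in\X^\an$ for trivial reasons (idempotents). Now run your minor/Fitting-ideal argument blockwise: with $\bar{\omega}_{\H,\upsilon}=\omega_{\H,\upsilon}/[\pi]\omega_{\H,\upsilon}$, the value $\Hdg^\int(\H,\iota)(x)(j)$ equals $\dim\H_x/d-\frac{1}{f(F|\Q_p)}\,v_x\bigl(\prod_\upsilon(\Fitt_j\bar{\omega}_{\H,\upsilon})_x\bigr)$, and the valuation of the \emph{product} ideal is again the minimum of $v_x$ over finitely many fixed elements of $A$ (products of $j\times j$ minors, one factor from each block); this is precisely the paper's choice of generators $f_{j,1},\dots,f_{j,r_j}$ of $\prod_\upsilon\Fitt_j\bar{\omega}_{\H,\upsilon}$. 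From there, everything else you wrote — continuity of $x\mapsto\min_\alpha v_x(m_\alpha)$, local constancy of $\dim\H_x/d$ after shrinking, reduction of $\Hdg^\int\le f_0$ to finitely many Weierstrass conditions $|m_\alpha(x)|\le\rho$, and glueing over a locally finite affine cover — goes through verbatim and coincides with the paper's argument.
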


\paragraph{Acknowledgements.}
The first author is part of the project ANR-19-CE40-0015 COLOSS.
The second author was funded by the Fondation Mathématique Jacques Hadamard,
while staying at the Centre de Mathématiques Laurent Schwartz, École Polytechnique.
The authors would like to thank Lorenzo Fantini, Laurent Fargues and
Valentin Hernandez for helpful discussions.
They also thank the anonymous referee for some comments that improved the exposition.

\section{Setup and notation}

Let $p$ be a prime number.
Let $K$ be a field extension of the $p$-adic numbers~$\Q_p$ and
assume that $K$ is complete with respect to
a valuation~$v$ with values in $\R$ extending the $p$-adic valuation (thus normalised at $v(p)=1$).
Denote by $\O_K$ the valuation ring of $K$ and by $\k$ its residue field,
which is then of characteristic $p$.

Let $F$ be a finite field extension of $\Q_p$ of degree $d$,
with ring of integers $\O_F$ and residue field $\k_F$;
we choose a uniformiser $\pi\in\O_F$.
Denote by $F^\nr$ the maximal unramified subextension (or \emph{inertia subfield})
of $F|\Q_p$ and by $\O_{F^\nr}$ its ring of integers,
so that $F^\nr|\Q_p$ is an unramified extension of degree $f(F|\Q_p)$, the inertia degree of $F|\Q_p$,
and $F|F^\nr$ is a totally ramified extension of degree $e(F|\Q_p)$,
the ramification index of $F|\Q_p$.

We assume throughout the document that $\k$ is perfect and that $K$ contains a Galois closure of
$F$ over $\Q_p$ (and hence that $\k$ contains $\k_F$), although we do not fix an embedding.
However, let us remark that the main definitions can be given without these assumptions,
as they are invariant under suitable base change (we will indicate when this is the case).
In particular, the statements depending only on these definitions hold in general.

Write $W(\k)$ for the ring of Witt vectors with coefficients in $\k$,
which is naturally a subring of $\O_K$,
and let $K_0$ be its fraction field, a subfield of $K$.
We denote by $\sigma$ the Frobenius endomorphism of $W(\k)$ and its extension to $K_0$.

For $n\in\N$, we write $\R^n_+\coloneqq\Set{(a_i)_{i=1}^n\in\R^n|a_1\ge\dots\ge a_n}$ for
the set of decreasing $n$-tuples of real numbers.
As a subset of $\R^n$,
note that $\R^n_+$ is closed under addition and nonnegative scalar multiplication.
We endow $\R^n_+$ with the following partial order:
\[
 (a_i)_{i=1}^n\le (b_i)_{i=1}^n \qquad\text{if}\qquad
 \sum_{i=1}^j a_i\le\sum_{i=1}^j b_i \quad\text{for all $1\le j\le n$ and}\quad
 \sum_{i=1}^n a_i=\sum_{i=1}^n b_i.
\]
An element $f=(a_i)_{i=1}^n\in\R^n_+$ can be viewed as a piecewise affine linear, continuous,
concave function $f\colon[0,n]\to\R$ starting at $(0,0)$ and
proceeding with slope $a_i$ on $[i-1,i]$.
In this sense, there is an obvious notion of \emph{break points} of $f$,
from which we exclude the extremal points $(0,0)$ and $(n,f(n))$.
The partial order defined above extends naturally to the set of all piecewise affine linear,
continuous, concave functions $f\colon[0,n]\to\R$ with $f(0)=0$,
namely $f\le g$ if we have pointwise inequality and $f(n)=g(n)$.

\section{Review and definitions}

\subsection{\texorpdfstring{$p$}{p}-divisible groups with endomorphism structure}

Let $R$ be a commutative $p$-adically complete local ring.

We call \emph{$p$-group} over $R$ any finite locally free commutative group scheme over $\Spec R$ of
$p$-power order.
If $H$ is such an object, we denote by $\height H$ the height of $H$,
i.e.\ the logarithm to base $p$ of its order,
and we write $\omega_H$ for the cotangent space of $H$ along the identity section,
a finitely presented $R$-module.
Recall that the association $H\mapsto\omega_H$, from the category of
finite locally free commutative group schemes over $\Spec R$ to that of $R$-modules,
defines a contravariant additive functor which is compatible with base change and right exact
(see \cite[Proposition~II.3.3.4]{Me}).
The notation $H^D$ stands for the Cartier dual of $H$.

For $H=(H[p^i])_{i\ge1}$ a $p$-divisible group over $\Spec R$ (or, for short, over $R$),
we denote by $\height H$ the height of $H$ (which equals the height of the $p$-group $H[p]$ over $R$)
and we set $\omega_H\coloneqq\varprojlim_{i\ge1}\omega_{H[p^i]}$.
If $p$ is nilpotent in $R$, then $\omega_H=\omega_{H[p^i]}$ for $i\ge1$ sufficiently large and
this is a finite free $R$-module (see \cite[\S 3.3.1]{BBM} and recall that $R$ is a local ring).
In general, since $R$ is $p$-adically complete,
we have that $\omega_H$ is anyway a finite free $R$-module,
with $\omega_H/p^i\omega_H=\omega_{H[p^i]}$ for all $i\ge1$.
The dimension of $H$, denoted by $\dim H$, is the rank of $\omega_H$ over $R$.
The association $H\mapsto\omega_H$,
from the category of $p$-divisible groups over $R$ to that of $R$-modules,
defines a contravariant $\Z_p$-linear functor which is compatible with base change
(to other $p$-adically complete local rings).
We write $H^D=(H[p^i]^D)_{i\ge1}$ for the Cartier dual of $H$.

If $H$ is a $p$-divisible group over $\k$,
we denote by $(\D(H),\phi_H)$ its contravariant Dieudonné module (see \cite[\S III]{Fo1}).
Recall that this is composed of a free $W(\k)$-module $\D(H)$ of rank $\height H$ and
an injective $\sigma$-linear endomorphism $\phi_H\colon\D(H)\to\D(H)$ such that
$p\D(H)\subseteq\phi_H\D(H)$;
in particular, $(\D(H),\phi_H)$ is an $F$-crystal over $\k$ as in \cite[Définition~1.1.1]{BH1}.
The association $H\mapsto(\D(H),\phi_H)$ determines a contravariant $\Z_p$-linear functor from
the category of $p$-divisible groups over $\k$ to that of $F$-crystals over $\k$,
inducing an antiequivalence with the full subcategory of Dieudonné modules.
This functor is compatible with base change of perfect fields.
Moreover, we have a natural identification of $\k$-vector-spaces:
\begin{equation}\label{Die-ctg}
 \D(H)/\phi_H\D(H)\cong\omega_H.
\end{equation}

\begin{dfn}
 A \emph{$p$-group}, respectively \emph{$p$-divisible group} over $R$
 \emph{with endomorphism structure} for $\O_F$ is a pair~$(H,\iota)$ consisting of
 a $p$-group, respectively a $p$-divisible group~$H$ over $\Spec R$ and
 a map of $\Z_p$-algebras $\iota\colon\O_F\to\End(H)$.
\end{dfn}

We denote by $\pgr_{R,\O_F}$, respectively $\pdiv_{R,\O_F}$ the category of
$p$-groups, respectively $p$-divisible groups over $R$ with endomorphism structure for $\O_F$,
with morphisms given by maps of $p$-groups, respectively $p$-divisible groups over $R$ that are
compatible with $\iota$ (or $\O_F$-\emph{equivariant}).

\begin{rmk}
Note that the definition of the Dieudonné module used here is the one given by
Fontaine in \cite[\S III]{Fo1}.
One may also use the Dieudonné crystal defined by
Berthelot, Breen and Messing in \cite[\S 3.3]{BBM}, evaluated at $W(\k)$.
The latter is naturally isomorphic to the former up to a Frobenius twist (see \cite[\S 4.2]{BBM}).
\end{rmk}

\subsection{The Hodge polygon in special fibre}

Let $(H,\iota)$ be a $p$-divisible group over $\k$ with endomorphism structure for $\O_F$,
where we remind that $\k$ is a perfect field of characteristic $p$ containing
the residue field $\k_F$ of $F$.
Let us recall from \cite[\S 1.1]{BH1} the definition of the Hodge polygon of $(H,\iota)$,
which is based on a more general invariant of $F$-crystals with $\O_F$-action.

In fact,
the Dieudonné module $(\D(H),\phi_H)$ of $H$ inherits from $\iota$ a $\Z_p$-linear action of $\O_F$,
that is, a map of $\Z_p$-algebras $\iota\colon\O_F\to\End(\D(H),\phi_H)$.
Because $\k$ contains $\k_F$, we have a decomposition of finite free $W(\k)$-modules:
\[
 \D(H)=\bigoplus_{\upsilon\colon F^\nr\to K_0}\D(H)_\upsilon,
\]
where $\upsilon$ ranges through the $f(F|\Q_p)$ embeddings of $F^\nr$ in $K_0$ and
$\O_{F^\nr}$ acts on $\D(H)_\upsilon$ via $\upsilon\colon\O_{F^\nr}\to W(\k)$.
The $\sigma$-linear endomorphism $\phi_H$ restricts then to injective maps:
\[
 \phi_H\colon\D(H)_{\sigma^{-1}\upsilon}\longrightarrow\D(H)_\upsilon,
\]
ensuring that the ranks $\rk_{W(\k)}\D(H)_\upsilon$ are all the same.
Moreover, the decomposition above reduces to a decomposition of $\k$-vector-spaces:
\begin{equation}\label{ctg-dcp-unr-spf}
 \D(H)/\phi_H\/D(H)=\bigoplus_\upsilon\D(H)_\upsilon/\phi_H\D(H)_{\sigma^{-1}\upsilon}.
\end{equation}
Observe now, for each embedding~$\upsilon$, that $\D(H)_\upsilon$ is a module over the ring:
\[
 W_{\O_F,\upsilon}(\k)\coloneqq\O_F\otimes_{\O_{F^\nr},\upsilon}W(\k)
\]
and that this is a discrete valuation ring, with uniformiser $\pi\otimes1$;
in fact, it is the ring of integers of a totally ramified extension of $K_0$ of degree $e(F|\Q_p)$.
Without ambiguity, we denote by $v$ the valuation of $W_{\O_F,\upsilon}(\k)$ normalised at $v(p)=1$.
Because $\D(H)_\upsilon$ is a torsion free $W(\k)$-module,
it is again torsion free and hence free as a $W_{\O_F,\upsilon}(\k)$-module.
In fact, we have that $\rk_{W(\k)}\D(H)_\upsilon=e(F|\Q_p)\rk_{W_{\O_F,\upsilon}(\k)}\D(H)_\upsilon$
independently of $\upsilon$, so we may set:
\[
 n\coloneqq\rk_{W_{\O_F,\upsilon}(\k)}\D(H)_\upsilon\in\N
\]
for any $\upsilon$.
We remark that this means in particular that:
\[
 \height H=\rk_{W(\k)}\D(H)=\sum_\upsilon\rk_{W(\k)}\D(H)_\upsilon=
 f(F|\Q_p)e(F|\Q_p)n=dn\in d\N.
\]
Now, for every $\upsilon\colon F^\nr\to K_0$,
since $W_{\O_F,\upsilon}(\k)$ is a valuation ring and $\phi_H$ is injective, we may write:
\[
 \bar{\D}_\upsilon\coloneqq\D(H)_\upsilon/\phi_H\D(H)_{\sigma^{-1}\upsilon}\cong
 \bigoplus_{i=1}^n W_{\O_F,\upsilon}(\k)/a_{\upsilon,i}W_{\O_F,\upsilon}(\k)
\]
for some nonzero elements~$a_{\upsilon,1},\dots,a_{\upsilon,n}\in W_{\O_F,\upsilon}(\k)$ with
$v(a_{\upsilon,1})\ge\dots\ge v(a_{\upsilon,n})$;
these elements are uniquely determined up to units of $W_{\O_F,\upsilon}(\k)$.
We set:
\[
 \Hdg(H,\iota)_\upsilon\coloneqq(v(a_{\upsilon,1}),\dots,v(a_{\upsilon,n}))\in\R^n_+.
\]
As a piecewise affine linear function,
$\Hdg(H,\iota)_\upsilon\colon[0,n]\to\R$ interpolates the following values:
\[
 \Hdg(H,\iota)_\upsilon(i)=\sum_{j=1}^i v(a_{\upsilon,j})
 =v(\Fitt_0\bar{\D}_\upsilon)-v(\Fitt_i\bar{\D}_\upsilon),
 \quad i\in\Set{0,\dots,n},
\]
where $\Fitt_i\bar{\D}_\upsilon$ denotes the $i$-th Fitting ideal of
the $W_{\O_F,\upsilon}(\k)$-module~$\bar{\D}_\upsilon$;
since $\Fitt_i\bar{\D}_\upsilon$ is a principal ideal, it makes sense to consider its valuation.
Note that $\bar{\D}_\upsilon$ is a $p$-torsion $W_{\O_F,\upsilon}(\k)$-module.
Thus, if we write:
\[
 \bar{\D}_\upsilon[\pi^j]\coloneqq\Set{w\in\bar{\D}_\upsilon|(\pi\otimes1)^jw=0}
  \subseteq\bar{\D}_\upsilon
\]
for $0\le j\le e(F|\Q_p)$, then we have the following alternative description:
\[
 \Hdg(H,\iota)_\upsilon\colon x\longmapsto\frac{1}{e(F|\Q_p)}
 \sum_{j=1}^{e(F|\Q_p)}\min\Set{x,\dim_\k\bar{\D}_\upsilon[\pi^j]/\bar{\D}_\upsilon[\pi^{j-1}]},
 \quad x\in[0,n].
\]
Indeed, $\dim_\k\bar{\D}_\upsilon[\pi^j]/\bar{\D}_\upsilon[\pi^{j-1}]$
equals the number of indices $i\in\Set{1,\dots,n}$ such that $e(F|\Q_p)v(a_{\upsilon,i})\ge j$.
In particular, the end point of $\Hdg(H,\iota)_\upsilon$ is:
\begin{equation}\label{Hdg-ups-end}
 \Hdg(H,\iota)_\upsilon(n)=\frac{1}{e(F|\Q_p)}
 \sum_{j=1}^{e(F|\Q_p)}\dim_\k\bar{\D}_\upsilon[\pi^j]/\bar{\D}_\upsilon[\pi^{j-1}]
 =\frac{1}{e(F|\Q_p)}\dim_\k\bar{\D}_\upsilon.
\end{equation}
The \emph{Hodge polygon} of $(H,\iota)$ is defined to be:
\[
 \Hdg(H,\iota)\coloneqq\frac{1}{f(F|\Q_p)}\sum_\upsilon\Hdg(H,\iota)_\upsilon\in\R^n_+,
\]
where $\upsilon$ ranges through the $f(F|\Q_p)$ embeddings of $F^\nr$ in $K_0$.
Its end point is given by the average over $\upsilon$ of the equations~\eqref{Hdg-ups-end}:
\[
 \Hdg(H,\iota)(n)=\frac{1}{f(F|\Q_p)}\sum_\upsilon\frac{1}{e(F|\Q_p)}\dim_\k\bar{\D}_\upsilon=
  \frac{1}{d}\dim_\k\D(H)/\phi_H\/D(H)=\frac{\dim H}{d},
\]
the last equality due to the identification~\eqref{Die-ctg}.
We remark that up to reversing the order of the slopes in order to get a convex polygon,
$\Hdg(H,\iota)$ equals the Hodge polygon of the $F$-crystal~$(\D(H),\phi_H)$ with
$\O_F$-action~$\iota$, as introduced in \cite[Définition~1.1.7]{BH1}.

Note that the above definition is invariant under base change of $(H,\iota)$ to another perfect field.
Thus, one may define $\Hdg(H,\iota)$ even if $\k$ is not perfect or does not contain $\k_F$,
namely as the Hodge polygon of the base change of $(H,\iota)$ to
a sufficiently large perfect field extension of $\k$.
In this case, we still find that $\height H=dn$ for some $n\in\N$ and so $\Hdg(H,\iota)\in\R^n_+$.
All statements concerning $\Hdg(H,\iota)$ will then hold in this more general setup.

\begin{rmk}\label{Hdg-unr}
 If $F|\Q_p$ is an unramified extension,
 then we have that $W_{\O_F,\upsilon}(\k)=W(\k)$ for every embedding $\upsilon$ of $F^\nr=F$ in $K_0$.
 In particular, for $(H,\iota)\in\pdiv_{\k,\O_F}$ one sees that in this case:
 \[
  \Hdg(H,\iota)_\upsilon\coloneqq(1,\dots,1,0,\dots,0),
 \]
 the number of $1$'s being equal to $\dim_\k\D(H)_\upsilon/\phi_H\D(H)_{\sigma^{-1}\upsilon}$.
 In fact, the notion of Hodge polygon in this setting can be traced back to that of
 ``Hodge point'' for $F$-crystals with additional (unramified) structure,
 introduced in \cite[\S 4]{RR}.
\end{rmk}

\subsection{The Pappas-Rapoport polygon}\label{S-PR}

Let $(H,\iota)$ be a $p$-divisible group over $\O_K$ with endomorphism structure for $\O_F$,
where we remind that $\O_K$ is the valuation ring of a complete valued field extension~$(K,v)$ of
$\Q_p$ containing a Galois closure of $F$ over $\Q_p$.
We will now define the Pappas-Rapoport polygon of $(H,\iota)$.
Note that $\height H=\height H_\k\in d\N$,
where $H_\k$ denotes the reduction of $H$ to the residue field~$\k$ of $\O_K$;
thus, we may write $\height H=dn$ with $n\in\N$.
Set $\omega_{H,K}\coloneqq\omega_H\otimes_{\O_K}K$ and
observe that $\iota$ induces a map of $\Q_p$-algebras $\iota\colon F\to\End_K(\omega_{H,K})$.
Because $K$ contains a Galois closure of $F$ over $\Q_p$, we have a decomposition of $K$-vector-spaces:
\begin{equation}\label{ctg-dcp}
 \omega_{H,K}=\bigoplus_{\tau\colon F\to K}\omega_{H,K,\tau},
\end{equation}
given by $\omega_{H,K,\tau}=\Set{w\in \omega_{H,K}|\forall a\in F:\iota(a)(w)=\tau(a)w}$ for
each embedding~$\tau$ of $F$ in $K$.
Set $r_\tau\coloneqq\dim_K\omega_{H,K,\tau}$ for every $\tau$.

\begin{rmk}\label{ctg-dcp-dim}
 Recall from \cite[Lemma~1.13]{Bi} that we have an $\O_F$-equivariant exact sequence of $\O_K$-modules:
 \begin{equation}\label{Hdg-fil}
  0\longrightarrow\omega_H\longrightarrow\E\longrightarrow\omega_{H^D}^\vee\longrightarrow 0,
 \end{equation}
 where $\E$ is a free $\O_F\otimes_{\Z_p}\O_K$-module of rank $n$ and
 $\omega_{H^D}^\vee\coloneqq\Hom_{\O_K}(\omega_{H^D},\O_K)$ carries
 an $\O_F$-action induced naturally from $\iota$.
 Write $\E_K\coloneqq\E\otimes_{\O_K}K=\bigoplus_\tau\E_{K,\tau}$ in
 a similar fashion as for $\omega_{H,K}$ above.
 Since $\E$ is free of rank $n$ over $\O_F\otimes_{\Z_p}\O_K$,
 we have that $\dim_K\E_{K,\tau}=n$ for every $\tau\colon F\to K$.
 Hence, because the $K$-vector-space~$\omega_{H,K,\tau}$ injects into $\E_{K,\tau}$,
 we find that $r_\tau=\dim_K\omega_{H,K,\tau}\le n$ for all $\tau$'s.
\end{rmk}

We may now set:
\[
 \PR(H,\iota)_\tau\coloneqq(1,\dots,1,0,\dots,0)\in\R^n_+,
\]
the number of $1$'s being equal to $r_\tau$, for each $\tau\colon F\to K$.
The \emph{Pappas-Rapoport polygon} of $(H,\iota)$ is defined to be:
\[
 \PR(H,\iota)\coloneqq\frac{1}{d}\sum_\tau\PR(H,\iota)_\tau\in\R^n_+,
\]
where $\tau$ ranges through the $d$ embeddings of $F$ in $K$.
Equivalently, as a piecewise affine linear function:
\[
 \PR(H,\iota)\colon x\mapsto\frac{1}{d}\sum_\tau\min\Set{x,r_\tau},\quad x\in[0,n].
\]
Its end point is given by:
\[
 \PR(H,\iota)(n)=\frac{1}{d}\sum_\tau r_\tau=\frac{1}{d}\dim_K\omega_{H,K}=\frac{\dim H}{d}.
\]

Note that the above definition is invariant under base change of $(H,\iota)$ to
the valuation ring of a larger field $K$ within our setup.
Thus, one may define $\PR(H,\iota)$ even if $K$ does not contain a Galois closure of $F$ over $\Q_p$, namely as the Pappas-Rapoport polygon of the base change of $(H,\iota)$ to
the valuation ring of a sufficiently large finite field extension of $K$.
In addition, we did not really make use of the fact that $\k$ is perfect,
so the Pappas-Rapoport polygon is also defined without this assumption.
All statements concerning $\PR(H,\iota)$ will then hold in this more general setup.

\begin{rmk}\label{rmk-PR-nm}
 In the case that the valuation of $K$ is discrete, the Pappas-Rapoport polygon of
 an object~$(H,\iota)\in\pdiv_{\O_K,\O_F}$ was already considered in \cite{AM},
 although in loc.\ cit.\ it is named ``Hodge polygon'' of $(H,\iota)$.
 Here, in accordance with the terminology of \cite{BH1},
 we choose to reserve the latter name for an invariant of objects over $\k$.
 The name ``Pappas-Rapoport polygon'' comes then from \cite{BH1} as well and
 its use here is motivated by the following observation.

 For each embedding~$\upsilon$ of $F^\nr$ in $K_0$,
 let $\tau_{\upsilon,1},\dots,\tau_{\upsilon,e(F|\Q_p)}$ be an ordering of
 the embeddings~$\tau\colon F\to K$ which restrict to $\upsilon$ on $F^\nr$.
 Set then $r_{\upsilon,i}\coloneqq\dim_K\omega_{H,K,\tau_{\upsilon,i}}$,
 with notation as in \eqref{ctg-dcp}, for every $\upsilon$ and $1\le i\le e(F|\Q_p)$.
 Now, up to reversing the order of the slopes in order to get a convex polygon,
 one finds that $\PR(H,\iota)$ corresponds to the ``Pappas-Rapoport polygon'' associated to
 the tuple~$\mi\coloneqq(r_{\upsilon,i})_{\upsilon,1\le i\le e(F|\Q_p)}$ in \cite[\S 1.2]{BH1}.
\end{rmk}

\begin{rmk}\label{PR-unr}
 If $F|\Q_p$ is unramified,
 the decomposition~\eqref{ctg-dcp} for $(H,\iota)\in\pdiv_{\O_K,\O_F}$ restricts to
 a decomposition of $\omega_H$ into a direct sum of finite free $\O_K$-submodules,
 on each of which $\O_F$ acts via a single embedding of $F=F^\nr$ in $K_0\subseteq K$
 (see \eqref{ctg-dcp-unr} below).
 The reduction of this to $\k$ recovers, via \eqref{Die-ctg},
 the decomposition~\eqref{ctg-dcp-unr-spf} relative to
 the reduction $(H_\k,\iota)$ of $(H,\iota)$ to $\k$.
 Taking Remark~\ref{Hdg-unr} into consideration,
 we get that in this case $\PR(H,\iota)=\Hdg(H_\k,\iota)$.

 For a general extension $F|\Q_p$, we will see in the next section that
 one can still compare the two polygons~$\PR(H,\iota)$ and $\Hdg(H_\k,\iota)$,
 see Proposition~\ref{Hdg-PR}.
\end{rmk}

\subsection{Comparison between Pappas-Rapoport and Hodge polygons}\label{S-Hdg-PR}

Let us keep the notation of the previous section,
with $(H,\iota)\in\pdiv_{\O_K,\O_F}$ and $\height H=nd$.
Consider the $\O_F$-action induced by $\iota$ on $\omega_H$,
a map of $\Z_p$-algebras $\iota\colon\O_F\to\End_{\O_K}(\omega_H)$.
Looking at the restriction of $\iota$ to $\O_{F^\nr}$,
we have a decomposition of finite free $\O_K$-modules:
\begin{equation}\label{ctg-dcp-unr}
 \omega_H=\bigoplus_{\upsilon\colon F^\nr\to K_0}\omega_{H,\upsilon},
\end{equation}
where $\upsilon$ ranges through the $f(F|\Q_p)$ embeddings of $F^\nr$ in $K_0$ and $\O_{F^\nr}$ acts on
$\omega_{H,\upsilon}$ via $\upsilon\colon\O_{F^\nr}\to W(\k)\subseteq\O_K$.
Observe now that the $\O_F$-action~$\iota$ restricts to each component~$\omega_{H,\upsilon}$ and
the decomposition~\eqref{ctg-dcp} restricts to:
\begin{equation}\label{ctg-dcp-ups}
 \omega_{H,\upsilon}\otimes_{\O_K}K=\bigoplus_{\tau|\upsilon}\omega_{H,K,\tau},
\end{equation}
where $\tau|\upsilon$ stands for the embeddings of $F$ in $K$ which agree with $\upsilon$ on $F^\nr$.
Fix then an ordering~$\tau_{\upsilon,1},\dots,\tau_{\upsilon,e(F|\Q_p)}$ of
the set~$\Set{\tau\colon F\to K|\tau|\upsilon}$ for every $\upsilon\colon F^\nr\to K_0$ and
consider the filtrations:
\begin{equation}\label{PR-fil}
 0=\omega_{\upsilon,0}\subseteq\omega_{\upsilon,1}\subseteq\dots
 \subseteq\omega_{\upsilon,e(F|\Q_p)}=\omega_{H,\upsilon}
\end{equation}
given by the $\O_K$-submodules:
\[
 \omega_{\upsilon,i}\coloneqq\omega_{H,\upsilon}\cap\bigoplus_{j=1}^{i}\omega_{H,K,\tau_{\upsilon,j}}
\]
for $0\le i\le e(F|\Q_p)$.
In order to make use of these filtrations, we will need the first part of the following lemma
(the final statement will only be useful later on).

\begin{lem}\label{Hdg-PR-lem}
 Let $M$ be a finitely generated and free $\O_K$-module,
 together with a map of $\Z_p$-algebras $\iota\colon\O_F\to\End_{\O_K}(M)$.
 Consider the decomposition of $K$-vector-spaces:
 \[
  M_K\coloneqq M\otimes_{\O_K}K=\bigoplus_{\tau\colon F\to K}M_{K,\tau}
 \]
 given by $M_{K,\tau}=\Set{w\in M_K|\forall a\in F:\iota(a)(w)=\tau(a)w}$ for
 each embedding~$\tau$ of $F$ in $K$,
 where $\iota$ is extended in the natural way to an $F$-action on $M_K$.
 Then, for any subset~$I\subseteq\Set{\tau\colon F\to K}$ the $\O_K$-module:
 \[
  M_I\coloneqq M\cap\bigoplus_{\tau\in I}M_{K,\tau}\subseteq M_K
 \]
 is a direct summand of $M$, free of rank $\sum_{\tau\in I}\dim_K M_{K,\tau}$.
 Moreover, setting:
 \[
  \rho_I\coloneqq\prod_{\tau\in I}\tau(\pi)\in\O_K,
 \]
 we have that $\rho_I M_I\subseteq\iota(\pi)M$.
\end{lem}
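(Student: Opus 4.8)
The plan is to prove the two assertions separately: first that $M_I$ is a free direct summand of $M$ of the stated rank, and then the inclusion $\rho_I M_I\subseteq\iota(\pi)M$ by induction on the cardinality of $I$. Throughout I write $V_I\coloneqq\bigoplus_{\tau\in I}M_{K,\tau}$, so that $M_I=M\cap V_I$, and I use that $\iota(\pi)$ acts on each eigenspace $M_{K,\tau}$ as the scalar $\tau(\pi)\in\O_K$.

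For the first assertion, I would start by observing that the quotient $M/M_I$ embeds into $M_K/V_I=\bigoplus_{\tau\notin I}M_{K,\tau}$, hence is torsion-free; being finitely generated over the valuation ring $\O_K$, it is therefore free (a finitely generated torsion-free module over a valuation ring is free, any minimal generating set being a basis). The surjection $M\twoheadrightarrow M/M_I$ onto a free module then splits, so $M\cong M_I\oplus(M/M_I)$; in particular $M_I$ is a finitely generated direct summand of $M$, hence projective and so free over the local ring $\O_K$. To compute its rank, I would note that scaling any element of $V_I$ into $M$ by a nonzero element of $\O_K$ lands in $M\cap V_I=M_I$, whence $M_I\otimes_{\O_K}K=V_I$ and $\rk_{\O_K}M_I=\dim_K V_I=\sum_{\tau\in I}\dim_K M_{K,\tau}$.

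For the final assertion, the naive approach would be to divide $\iota(\pi)$ out eigenspace by eigenspace: the element obtained from $m\in M_I$ by scaling its $\tau$-component by $\prod_{\tau'\in I\setminus\{\tau\}}\tau'(\pi)$ is a visible preimage of $\rho_I m$ under $\iota(\pi)$. The trouble is that this preimage need not lie in $M$; realizing it as a polynomial in $\iota(\pi)$ with coefficients in $\O_K$ amounts to Lagrange interpolation at the values $\tau(\pi)$, which may coincide or fail to be $p$-adically separated (in the unramified case all $\tau(\pi)$ equal $p$). I expect this to be the main obstacle, and the plan is to circumvent it by induction on $|I|$ rather than by exhibiting such a polynomial.

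The induction runs as follows. The case $I=\emptyset$ is trivial, since then $M_I=0$ and $\rho_I=1$. For $I\neq\emptyset$, fix $\tau_0\in I$ and set $J\coloneqq I\setminus\{\tau_0\}$. The endomorphism $N\coloneqq\iota(\pi)-\tau_0(\pi)\,\id$ preserves $M$ (as $\tau_0(\pi)\in\O_K$) and annihilates the $\tau_0$-eigenspace, so for $m\in M_I$ one gets $Nm\in M\cap V_J=M_J$. By the induction hypothesis $\rho_J M_J\subseteq\iota(\pi)M$, hence $\rho_J Nm\in\iota(\pi)M$; since also $\rho_J\,\iota(\pi)m=\iota(\pi)(\rho_J m)\in\iota(\pi)M$, subtracting yields $\rho_J\,\tau_0(\pi)\,m\in\iota(\pi)M$. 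As $\rho_I=\tau_0(\pi)\rho_J$, this is exactly $\rho_I m\in\iota(\pi)M$, which closes the induction and proves $\rho_I M_I\subseteq\iota(\pi)M$.
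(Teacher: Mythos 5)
Your proof is correct and follows essentially the same route as the paper's: torsion-freeness of the finitely generated quotient $M/M_I$ over the valuation ring $\O_K$ yields freeness and hence the splitting, and the inclusion $\rho_I M_I\subseteq\iota(\pi)M$ is established by the very same induction on $|I|$, your operator $N=\iota(\pi)-\tau_0(\pi)\,\mathrm{id}$ being just a repackaging of the paper's observation that $\tau_0(\pi)w-\iota(\pi)(w)$ lies in $M_{I\setminus\{\tau_0\}}$, followed by the same rearrangement modulo $\iota(\pi)M$. The only (cosmetic) divergence is the rank computation, where you note that $M_I$ spans $\bigoplus_{\tau\in I}M_{K,\tau}$ over $K$ by clearing denominators, whereas the paper compares the ranks of $M_I$ and $M/M_I$ against $\dim_K M_K$ to force both inequalities to be equalities.
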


\begin{proof}
 The inclusion~$M\subseteq M_K$ induces an injective map:
 \begin{equation}\label{lem-M_I-eq1}
  M/M_I\longrightarrow M_K/\bigoplus_{\tau\in I}M_{K,\tau}\cong\bigoplus_{\tau\notin I}M_{K,\tau},
 \end{equation}
 ensuring that $M/M_I$ is a torsion free $\O_K$-module.
 Since $\O_K$ is a valuation ring and $M/M_I$ is finitely generated over it,
 it follows that $M/M_I$ is a free $\O_K$-module and, in particular, projective.
 This proves that $M_I$ is a direct summand of $M$ and, as such,
 it is also torsion free and finitely generated over $\O_K$, hence a free $\O_K$-module.
 Now, the inclusions $M_I\subseteq\bigoplus_{\tau\in I}M_{K,\tau}$ and
 \eqref{lem-M_I-eq1} imply respectively that:
 \[
  \rk_{\O_K}M_I\le\sum_{\tau\in I}\dim_K M_{K,\tau}
  \qquad\text{and}\qquad
  \rk_{\O_K}M/M_I\le\sum_{\tau\notin I}\dim_K M_{K,\tau}.
 \]
 However, the left-hand sides of the above expressions sum to $\rk_{\O_K}M=\dim_K M_K$,
 which also equals the sum of the right-hand sides.
 Thus, we actually have equality in both expressions.

 We prove the final statement by induction on the cardinality of $I$.
 If $I$ is empty, then $M_I=0$ and $\rho_I=1$, so the assertion is clear.
 Assume now that $I$ contains at least one element and fix $\tau_0\in I$.
 Set then $I'\coloneqq I\setminus\Set{\tau_0}$ and
 observe that $M_{I'}=M_I\cap\bigoplus_{\tau\in I'}M_{K,\tau}$.
 In particular, the inclusion $M_I\subseteq\bigoplus_{\tau\in I}M_{K,\tau}$ induces an injective map:
 \[
  M_I/M_{I'}\longrightarrow\bigoplus_{\tau\in I}M_{K,\tau}/\bigoplus_{\tau\in I'}M_{K,\tau}
   \cong M_{K,\tau_0},
 \]
 ensuring that the $\O_F$-action induced by $\iota$ on $M_I/M_{I'}$ factors through
 $\tau_0\colon\O_F\to\O_K$.
 Thus, for every element $w\in M_I$,
 we have that $\tau_0(\pi)w$ and $\iota(\pi)(w)$ have the same image in $M_I/M_{I'}$,
 i.e.\ their difference lies in $M_{I'}$.
 By inductive hypothesis, then:
 \[
  \rho_{I'}(\tau_0(\pi)w-\iota(\pi)(w))\in\iota(\pi)M.
 \]
 But this element can be rewritten as $\rho_I w-\iota(\pi)(\rho_{I'}w)$,
 so that $\rho_I w$ belongs itself to $\iota(\pi)M$,
 proving the inductive step and hence the claimed statement.
\end{proof}

Back to our discussion, the lemma implies that
for every embedding $\upsilon$ of $F^\nr$ in $K_0$ the filtration~\eqref{PR-fil} of
$\omega_{H,\upsilon}$ is by $\O_K$-direct-summands,
with $\rk_{\O_K}\omega_{\upsilon,i}/\omega_{\upsilon,i-1}=\dim_K\omega_{H,K,\tau_{\upsilon,i}}$ for
$1\le i\le e(F|\Q_p)$.
Furthermore, these filtrations are clearly $\O_F$-stable and the $\O_F$-action induced on
each graded piece~$\omega_{\upsilon,i}/\omega_{\upsilon,i-1}$ is via
$\tau_{\upsilon,i}\colon\O_F\to\O_K$.

Let now $(H_\k,\iota)$ denote the reduction of $(H,\iota)$ to $\k$.
Considering the $\O_F$-action induced by $\iota$ on $\omega_{H_\k}$,
or rather its restriction to $\O_{F^\nr}$, we have a decomposition of $\k$-vector-spaces:
\begin{equation}\label{ctg-dcp-unr-red}
 \omega_{H_\k}=\bigoplus_\upsilon\omega_{H_\k,\upsilon},
\end{equation}
where $\upsilon$ ranges through the embeddings of $F^\nr$ in $K_0$ and $\O_{F^\nr}$ acts on
$\omega_{H_\k,\upsilon}$ through $\upsilon\colon\O_{F^\nr}\to W(\k)\to\k$ (in fact,
via \eqref{Die-ctg} this recovers the decomposition~\eqref{ctg-dcp-unr-spf} relative to $H_\k$).
Note that \eqref{ctg-dcp-unr-red} identifies $\O_F$-equivariantly with
the reduction to $\k$ of \eqref{ctg-dcp-unr}.
Then, putting together the previous observations,
we have that for every $\upsilon$ the reduction to $\k$ of \eqref{PR-fil} gives a filtration:
\begin{equation}\label{PR-fil-red}
 0=\omega_{\upsilon,0,\k}\subseteq\omega_{\upsilon,1,\k}\subseteq\dots
 \subseteq\omega_{\upsilon,e(F|\Q_p),\k}=\omega_{H_\k,\upsilon}\cong\omega_{H,\upsilon}\otimes_{\O_K}\k
\end{equation}
of $\omega_{H_\k,\upsilon}$ by sub-$\k$-vector-spaces satisfying the following conditions:
\begin{itemize}
 \item $\iota(\pi)(\omega_{\upsilon,i,\k})\subseteq\omega_{\upsilon,i-1,\k}$ for $1\le i\le e(F|\Q_p)$,
  as the $\O_F$-action induced on the quotient~$\omega_{\upsilon,i,\k}/\omega_{\upsilon,i-1,\k}$ is
  via $\tau_{\upsilon,i}\colon\O_F\to\O_K\to\k$, which sends $\pi$ to zero;
 \item $\dim_\k\omega_{\upsilon,i,\k}/\omega_{\upsilon,i-1,\k}=\dim_K\omega_{H,K,\tau_{\upsilon,i}}$
  for $1\le i\le e(F|\Q_p)$.
\end{itemize}
In other words, according to \cite[Définition~2.2.1]{BH1},
the collection of the filtrations~\eqref{PR-fil-red} for varying $\upsilon$ yields
a \emph{Pappas-Rapoport condition} on $(H_\k,\iota)$ for
the tuple~$\mi=(r_{\upsilon,i})_{\upsilon,i}$ defined by
$r_{\upsilon,i}\coloneqq\dim_K\omega_{H,K,\tau_{\upsilon,i}}$ for
all $\upsilon\colon F^\nr\to K_0$ and $1\le i\le e(F|\Q_p)$.
Equivalently, we have a Pappas-Rapoport condition for $\mi$,
in the sense of \cite[Définition~1.2.1]{BH1},
on the Dieudonné module~$(\D(H_\k),\phi_{H_\k})$ of $H_\k$,
endowed with the $\O_F$-action corresponding to $\iota$.
This allows us to deduce the next proposition from \cite[Théorème~1.3.1]{BH1}
(after reversing the order of the slopes of the polygons and
taking the second observation of Remark~\ref{rmk-PR-nm} into account).

\begin{prp}[\cite{BH1}, Théorème~1.3.1]\label{Hdg-PR}
 Let $(H,\iota)$ be a $p$-divisible group over $\O_K$ with endomorphism structure for $\O_F$ and
 denote by $(H_\k,\iota)$ the reduction of $H$ to $\k$ together with the induced $\O_F$-action.
 Then:
 \[
  \Hdg(H_\k,\iota)\le\PR(H,\iota).
 \]
\end{prp}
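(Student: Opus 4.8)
The plan is to compare the two polygons one embedding $\upsilon\colon F^\nr\to K_0$ at a time and to recognise the resulting inequality as an instance of Schur concavity. Abbreviate $e\coloneqq e(F|\Q_p)$ and $f\coloneqq f(F|\Q_p)$. Both polygons are averages over $\upsilon$ of per-component polygons: by definition $\PR(H,\iota)=\frac{1}{f}\sum_\upsilon\PR_\upsilon$ with $\PR_\upsilon(x)=\frac{1}{e}\sum_{i=1}^{e}\min\{x,r_{\upsilon,i}\}$, while the alternative description recalled above gives $\Hdg(H_\k,\iota)=\frac{1}{f}\sum_\upsilon\Hdg(H_\k,\iota)_\upsilon$ with $\Hdg(H_\k,\iota)_\upsilon(x)=\frac{1}{e}\sum_{j=1}^{e}\min\{x,s_{\upsilon,j}\}$, where $s_{\upsilon,j}\coloneqq\dim_\k\bar{\D}_\upsilon[\pi^j]/\bar{\D}_\upsilon[\pi^{j-1}]$. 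Since the partial order on polygons is stable under such averaging and, for each $\upsilon$, the two component polygons share the endpoint $\frac{1}{e}\dim_\k\bar{\D}_\upsilon$ (by \eqref{Hdg-ups-end}, Remark~\ref{ctg-dcp-dim}, and $\sum_i r_{\upsilon,i}=\dim_\k\omega_{H_\k,\upsilon}=\dim_\k\bar{\D}_\upsilon$ via \eqref{Die-ctg}), it suffices to establish, for every $\upsilon$ and every $x\in[0,n]$, the inequality
\[
 \sum_{j=1}^{e}\min\{x,s_{\upsilon,j}\}\le\sum_{i=1}^{e}\min\{x,r_{\upsilon,i}\}.
\]

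The essential input is the Pappas-Rapoport condition recorded in the discussion preceding the statement. Identifying $\omega_{H_\k,\upsilon}$ with $\bar{\D}_\upsilon$ through \eqref{Die-ctg}, the filtration~\eqref{PR-fil-red} satisfies $\iota(\pi)(\omega_{\upsilon,i,\k})\subseteq\omega_{\upsilon,i-1,\k}$, so iterating yields $\iota(\pi)^i(\omega_{\upsilon,i,\k})=0$, that is $\omega_{\upsilon,i,\k}\subseteq\bar{\D}_\upsilon[\pi^i]$. Comparing dimensions, and using that the $i$-th graded piece of \eqref{PR-fil-red} has dimension $r_{\upsilon,i}$ while $\dim_\k\bar{\D}_\upsilon[\pi^i]=\sum_{j=1}^{i}s_{\upsilon,j}$, I obtain $\sum_{l=1}^{i}r_{\upsilon,l}\le\sum_{j=1}^{i}s_{\upsilon,j}$ for $0\le i\le e$. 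The tuple $(s_{\upsilon,j})_j$ is decreasing by construction; since the ordering of $\tau_{\upsilon,1},\dots,\tau_{\upsilon,e}$, and hence of the $r_{\upsilon,i}$, is at my disposal whereas the target inequality does not depend on it, I would fix it so that $r_{\upsilon,1}\ge\dots\ge r_{\upsilon,e}$. The partial-sum inequalities above then assert precisely that $(s_{\upsilon,j})_j$ majorises $(r_{\upsilon,i})_i$, the two tuples having equal total sum.

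To conclude, for fixed $x$ the function $t\mapsto\min\{x,t\}$ is concave, so $c\mapsto\sum_l\min\{x,c_l\}$ is Schur concave; applied to the majorisation just obtained it gives exactly the component inequality displayed above, whence $\Hdg(H_\k,\iota)_\upsilon\le\PR_\upsilon$ and, averaging over $\upsilon$, the proposition. I expect the only real difficulty to be combinatorial: correctly pairing the conjugate partition $(s_{\upsilon,j})_j$ of the elementary-divisor valuations of $\bar{\D}_\upsilon$ with the filtration dimensions $(r_{\upsilon,i})_i$, and securing the majorisation in the right direction, for which the freedom in ordering the $\tau_{\upsilon,i}$ is what makes the argument go through. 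Alternatively, once the Pappas-Rapoport condition on the Dieudonné module $(\D(H_\k),\phi_{H_\k})$ has been recorded as in that same discussion, the inequality is an immediate consequence of \cite[Théorème~1.3.1]{BH1} after reversing the order of the slopes.
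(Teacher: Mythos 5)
Your proposal is correct, and its core is genuinely different from what the paper does. The paper gives no separate proof text for Proposition~\ref{Hdg-PR}: its proof \emph{is} the discussion of \S\ref{S-Hdg-PR} (Lemma~\ref{Hdg-PR-lem} and the construction of the filtrations~\eqref{PR-fil}, whose reductions~\eqref{PR-fil-red} constitute a Pappas-Rapoport condition on $(H_\k,\iota)$), followed by an appeal to \cite[Th\'eor\`eme~1.3.1]{BH1} as a black box --- which is exactly the alternative you mention in your closing sentence. Your main argument keeps the same input, namely the filtration~\eqref{PR-fil-red} with its two listed properties and the $\O_F$-equivariant identification~\eqref{Die-ctg-ups} of $\omega_{H_\k,\upsilon}$ with $\bar{\D}_\upsilon$, but replaces the citation by a direct combinatorial argument, in effect reproving the relevant half of the cited theorem in this setting: iterating $\iota(\pi)(\omega_{\upsilon,i,\k})\subseteq\omega_{\upsilon,i-1,\k}$ gives $\omega_{\upsilon,i,\k}\subseteq\bar{\D}_\upsilon[\pi^i]$, hence the partial-sum bounds $\sum_{l=1}^{i}r_{\upsilon,l}\le\sum_{j=1}^{i}s_{\upsilon,j}$ (with $s_{\upsilon,j}=\dim_\k\bar{\D}_\upsilon[\pi^j]/\bar{\D}_\upsilon[\pi^{j-1}]$) together with equality at $i=e(F|\Q_p)$, and Schur concavity of the symmetric concave function $c\mapsto\sum_{l}\min\{x,c_l\}$ converts this majorisation into the pointwise inequality between the per-$\upsilon$ polygons, whence the proposition after averaging. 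The two points you flag as delicate do work out: the $s_{\upsilon,j}$ are decreasing because multiplication by $\pi$ injects $\bar{\D}_\upsilon[\pi^{j+1}]/\bar{\D}_\upsilon[\pi^{j}]$ into $\bar{\D}_\upsilon[\pi^{j}]/\bar{\D}_\upsilon[\pi^{j-1}]$ (equivalently, they form the conjugate partition of the tuple $(e(F|\Q_p)v(a_{\upsilon,i}))_i$); and it is indeed essential, as you do, to fix the ordering with $r_{\upsilon,1}\ge\dots\ge r_{\upsilon,e(F|\Q_p)}$ \emph{before} constructing the filtration, since the construction of \eqref{PR-fil} works for any ordering while decreasingly sorted partial sums are maximal over all orderings, so the bounds for an arbitrary ordering would not imply those for the sorted one. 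As for what each route buys: yours is self-contained, makes transparent that the inequality comes from the filtration sitting inside the $\pi$-torsion filtration of $\bar{\D}_\upsilon$ (with equality precisely in the generalised Rapoport case, matching Remark~\ref{rmk-gRc}), and directly yields the per-embedding refinement $\Hdg(H_\k,\iota)_\upsilon\le\frac{1}{e(F|\Q_p)}\sum_{\tau|\upsilon}\PR(H,\iota)_\tau$ recorded in that remark; the paper's citation is shorter and inherits the full strength of \cite{BH1}, including the comparison with the Newton polygon, which your argument does not address.
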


\begin{rmk}\label{rmk-gRc}
 A closer look at the proof of \cite[1.3.1]{BH1} shows that in fact,
 for $(H,\iota)\in\pdiv_{\O_K,\O_F}$ and its reduction $(H_\k,\iota)\in\pdiv_{\k,\O_F}$, we have:
 \[
  \Hdg(H_\k,\iota)_\upsilon\le\frac{1}{e(F|\Q_p)}\sum_{\tau|\upsilon}\PR(H,\iota)_\tau
 \]
 for all embeddings~$\upsilon$ of $F^\nr$ in $K_0$.
 Consider now the limit case $\Hdg(H_\k,\iota)=\PR(H,\iota)$.
 In the language of \cite[Définition~3.1.1]{BH1},
 this means that $(H_\k,\iota)$ satisfies the \emph{generalised Rapoport condition} with respect to
 the tuple~$\mi$ introduced above.
 Note that this is equivalent to having an equality in all expressions above.
 Following then a similar observation as in loc.\ cit.,
 one finds that this condition is in turn equivalent to the fact that for every $\upsilon$
 the filtration~\eqref{PR-fil-red} is given by:
 \[
  \omega_{\upsilon,i,\k}=\omega_{H_\k,\upsilon}[\pi^i]
   \coloneqq\Set{w\in\omega_{H_\k,\upsilon}|\iota(\pi^i)w=0}
  \qquad\text{for } 1\le i\le e(F|\Q_p).
 \]
\end{rmk}

\subsection{The Harder-Narasimhan polygons}\label{S-HN}

Let us recall from \cite{Fa1} the definition of the Harder-Narasimhan polygon of
a $p$-group~$C\in\pgr_{\O_K}$, which describes a certain filtration of $C$ by
finite locally free sub-group-schemes.
After suitable renormalisation,
this will allow us to attach similar invariants to objects of $\pgr_{\O_K,\O_F}$ and,
passing to the limit, of $\pdiv_{\O_K,\O_F}$.

First of all, to each nonzero object~$C\in\pgr_{\O_K}$ is associated a value~$\deg C\in\R$,
called the \emph{degree} of $C$ (see \cite[\S 3]{Fa1}).
Explicitly, if $\omega_C$ has a presentation:
\[
 \omega_C\cong\bigoplus_{i=1}^m\O_K/a_i\O_K
\]
for some $m\in\N$ and nonzero elements $a_1,\dots,a_m\in\O_K$, then:
\[
 \deg C=\sum_{i=1}^m v(a_i)=v(\Fitt_0\omega_C).
\]
The degree is additive in short exact sequences and satisfies the relation~$\deg C+\deg C^D=\height C$.
One defines then the \emph{slope} of $C$ to be $\mi(C)\coloneqq\deg C/\height C\in\R$.
By \cite[\S 4]{Fa1},
the category~$\pgr_{\O_K}$ admits a Harder-Narasimhan formalism for the slope function $\mi$.
More precisely, $C$ is called \emph{semi-stable} if
for every nonzero subobject $C'\subseteq C$ we have $\mi(C')\le\mi(C)$.
In general, $C$ possesses a unique \emph{Harder-Narasimhan filtration}:
\[
 0=C_0\subsetneq C_1\subsetneq\dots\subsetneq C_r=C
\]
in $\pgr_{\O_K}$, with semistable graded pieces and $\mi(C_1)>\dots>\mi(C/C_{r-1})$.
The \emph{Harder-Narasimhan polygon} of $C$ is defined to be:
\[
 \HN(C)\coloneqq
  (\mi(C_1)^{(\height C_1)},\dots,\mi(C/C_{r-1})^{(\height C/C_{r-1})})\in\R^{\height C}_+,
\]
where the superscripts denote the multiplicity of each slope.
Note that $\HN(C)(\height C)=\deg C$.
Let us also remark that the Harder-Narasimhan filtration, and hence the associated polygon,
are invariant under base change of $C$ to the ring of integers of a larger field $K$ within our setup.

Suppose now that $C$ is endowed with endomorphism structure $\iota\colon\O_F\to\End(C)$ for $\O_F$.
Then, by \cite[Corollaire~10]{Fa1}, the Harder-Narasimhan filtration is $\iota$-stable.
We define the Harder-Narasimhan polygon of $(C,\iota)\in\pgr_{\O_K,\O_F}$ to be
the piecewise affine linear function:
\begin{align*}
 \HN(C,\iota)\colon[0,\height C/d] &\longrightarrow\R \\
 x &\longmapsto\frac{1}{d}\HN(C)(dx).
\end{align*}

If $(H,\iota)$ is a $p$-divisible group over $\O_K$ with endomorphism structure for $\O_F$,
we may consider the family of Harder-Narasimhan polygons of
the objects~$(H[p^i],\iota)\in\pgr_{\O_K,\O_F}$, for $i\ge1$.
In order to compare these among each other,
we define the \emph{renormalised} Harder-Narasimhan polygons:
\begin{align*}
 \HN^\r(H[p^i],\iota)\colon[0,n] &\longrightarrow\R \\
 x &\longmapsto\frac{1}{i}\HN(H[p^i],\iota)(ix),
\end{align*}
where $n=\height H/d$ and $i\ge1$.
It follows then from \cite[Théorème~1 and Théorème~7]{Fa3}
that the sequence of functions~$\HN^\r(H[p^i],\iota)$ converges uniformly, for $i\to\infty$,
to a piecewise affine linear, continuous, concave function~$\HN(H,\iota)\colon[0,n]\to\R$,
which is equal to the infimum of the previous functions and
satisfies $\HN(H,\iota)(0)=0$ and $\HN(H,\iota)(n)=\dim H/d$.
We recall that $H$ (equivalently $(H,\iota)$) is called of \emph{HN type} if
the Harder-Narasimhan filtrations of $H[p^i]$, for $i\ge1$,
form a filtration of $H$ by sub-$p$-divisible groups.
By \cite[Proposition~7]{Fa3}, this is equivalent to the fact that $\HN(H,\iota)=\HN(H[p],\iota)$.

Note that for the definition of the above polygons we do not really need to assume that
$K$ contains a Galois closure of $F$ over $\Q_p$, nor that the residue field $\k$ of $\O_K$ is perfect.
All statements concerning the Harder-Narasimhan polygons hold therefore in this more general setup.

\section{The integral Hodge polygon}

\subsection{The definition}

Throughout this whole section,
we let $(H,\iota)$ be a $p$-divisible group over $\O_K$ with endomorphism structure for $\O_F$.
We denote by $(H_\k,\iota)$ the reduction of $H$ to $\k$ together with
the induced $\O_F$-action and write $\height H=\height H_\k=nd$ with $n\in\N$.
We now come to the definition of the integral Hodge polygon of $(H,\iota)$.
Recall that we have a decomposition~\eqref{ctg-dcp-unr} of finite free $\O_K$-modules:
\[
 \omega_H=\bigoplus_\upsilon\omega_{H,\upsilon},
\]
where $\upsilon$ ranges through the $f(F|\Q_p)$ embeddings of $F^\nr$ in $K_0$ and
$\O_{F^\nr}$ acts on $\omega_{H,\upsilon}$ via $\upsilon\colon\O_{F^\nr}\to W(\k)\subseteq\O_K$.
Let $[\pi]\colon\omega_H\to\omega_H$ denote the endomorphism of $\omega_H$ induced by
$\iota(\pi)$ and note that this restricts to each component $\omega_{H,\upsilon}$.
Because $\pi^{e(F|\Q_p)}$ equals $p$ times a unit in $\O_F$ and $\omega_H$ is torsion free,
we have that $[\pi]$ is an injective map.
For every embedding $\upsilon$, then, since $\O_K$ is a valuation ring,
the endomorphism~$[\pi]\colon\omega_{H,\upsilon}\to\omega_{H,\upsilon}$ may be represented,
with respect to suitable $\O_K$-bases, by a diagonal matrix with
nonzero entries~$a_{\upsilon,1},\dots,a_{\upsilon,r_\upsilon}\in\O_K$ with
$v(a_{\upsilon,1})\ge\dots\ge v(a_{\upsilon,r_\upsilon})$,
where $r_\upsilon=\rk_{\O_K}\omega_{H,\upsilon}$;
the elements~$a_{\upsilon,1},\dots,a_{\upsilon,r_\upsilon}$ are uniquely determined up to
units of $\O_K$ and can be found unchanged even with a different choice of uniformiser $\pi\in\O_F$.
Thus, we have an isomorphism of $\O_K$-modules:
\begin{equation}\label{ctg-pi-eld}
 \omega_{H,\upsilon}/[\pi]\omega_{H,\upsilon}\cong\bigoplus_{i=1}^{r_\upsilon}\O_K/a_{\upsilon,i}\O_K.
\end{equation}
We claim that this expression features at most $n$ nontrivial summands.
In order to see this, it suffices to show that the dimension of the $\k$-vector-space given by
the reduction along $\O_K\to\k$ has dimension at most $n$.
Note first that according to the decomposition in \eqref{ctg-dcp-unr-spf},
the identification \eqref{Die-ctg} restricts to:
\begin{equation}\label{Die-ctg-ups}
 \D(H_\k)_\upsilon/\phi_{H_\k}\D(H_\k)_{\sigma^{-1}\upsilon}\cong\omega_{H,\upsilon}\otimes_{\O_K}\k,
\end{equation}
compatibly with the induced $\O_F$-action on both sides.
In particular, $\omega_{H,\upsilon}\otimes_{\O_K}\k$ affords a surjection from $\D(H_\k)_\upsilon$,
which we recall being a free module over $W_{\O_F,\upsilon}(\k)=\O_F\otimes_{\O_{F^\nr},\upsilon}W(\k)$
of rank $\height H_\k/d=n$.
Quotienting by the action of $\pi\in\O_F$, then,
we find that $(\omega_{H,\upsilon}/[\pi]\omega_{H,\upsilon})\otimes_{O_K}\k$ affords a surjection from
a $\k$-vector-space of dimension $n$, proving the claim.
This means that we may always write:
\begin{equation}\label{ctg-pi-eld-red}
 \bar{\omega}_{H,\upsilon}\coloneqq
 \omega_{H,\upsilon}/[\pi]\omega_{H,\upsilon}\cong\bigoplus_{i=1}^n\O_K/a_{\upsilon,i}\O_K
\end{equation}
by neglecting trivial summands from \eqref{ctg-pi-eld} or
possibly setting $a_{\upsilon,i}=1$ for $i>r_\upsilon$.
Let us remark that the last statement remains true even without the assumption that $\k$ is perfect;
in fact, in the argument for the previous claim one may pass to any perfect field extension of $\k$.
We let:
\[
 \Hdg^\int(H,\iota)_\upsilon\coloneqq(v(a_{\upsilon,1}),\dots,v(a_{\upsilon,n}))\in\R^n_+.
\]
Note that the end point of $\Hdg^\int(H,\iota)_\upsilon$ is:
\begin{equation}\label{Hdgi-ups-end}
 \Hdg^\int(H,\iota)_\upsilon(n)=\sum_{i=1}^n v(a_{\upsilon,i})=v(\det([\pi]|\omega_{H,\upsilon}))
 =\frac{1}{e(F|\Q_p)}\rk_{\O_K}\omega_{H,\upsilon},
\end{equation}
the last equality due to the fact that $\pi^{e(F|\Q_p)}$ equals $p$ times a unit in $\O_F$.
More generally, as a piecewise affine linear function,
$\Hdg^\int(H,\iota)_\upsilon\colon[0,n]\to\R$ interpolates the following values:
\[
 \Hdg^\int(H,\iota)_\upsilon(i)=\sum_{j=1}^i v(a_{\upsilon,j})=
 v(\Fitt_0\bar{\omega}_{H,\upsilon})-v(\Fitt_i\bar{\omega}_{H,\upsilon}),
 \quad i\in\Set{0,\dots,n},
\]
where $\Fitt_i\bar{\omega}_{H,\upsilon}$ denotes the $i$-th Fitting ideal (in this case principal)
of the $\O_K$-module~$\bar{\omega}_{H,\upsilon}$.
Observe that $\bar{\omega}_{H,\upsilon}$ is a $p$-torsion $\O_K$-module.
Thus, in the case that the valuation of $K$ is discrete, say of absolute ramification index $e_K$,
we have the following alternative description:
\[
 \Hdg^\int(H,\iota)_\upsilon\colon x\longmapsto\frac{1}{e_K}\sum_{j=1}^{e_K}
  \min\Set{x,\length\bar{\omega}_{H,\upsilon}[\varpi^j]/\bar{\omega}_{H,\upsilon}[\varpi^{j-1}]},
 \quad x\in[0,n],
\]
where $\varpi\in\O_K$ is a uniformiser (so that $v(\varpi)=e_K^{-1}$) and for $0\le j\le e_K$:
\[
 \bar{\omega}_{H,\upsilon}[\varpi^j]\coloneqq\Set{w\in\bar{\omega}_{H,\upsilon}|\varpi^jw=0}.
\]
We define the \emph{integral Hodge polygon} of $(H,\iota)$ to be:
\[
 \Hdg^\int(H,\iota)\coloneqq\frac{1}{f(F|\Q_p)}\sum_\upsilon\Hdg^\int(H,\iota)_\upsilon\in\R^n_+.
\]
Its end point is given by the average over $\upsilon$ of the equations~\eqref{Hdgi-ups-end}:
\[
 \Hdg^\int(H,\iota)(n)=
 \frac{1}{f(F|\Q_p)}\sum_\upsilon\frac{1}{e(F|\Q_p)}\rk_{\O_K}\omega_{H,\upsilon}=
 \frac{1}{d}\rk_{\O_K}\omega_H=\frac{\dim H}{d}.
\]

Note that the above definition is invariant under base change of $(H,\iota)$ to
the valuation ring of a larger field $K$ within our setup.
Moreover, it only makes use of the fact that $\k$ contains $\k_F$,
i.e.\ that $K$ contains $F^\nr$, but not that it contains a whole Galois closure of $F$,
nor that $\k$ is perfect.
Thus, one may define $\Hdg^\int(H,\iota)$ even without these assumptions,
possibly as the integral Hodge polygon of the base change of $(H,\iota)$ to
the valuation ring of a finite field extension of $K$ whose residue field contains $\k_F$.
All statements concerning $\Hdg^\int(H,\iota)$ will then hold in this more general setup.

\begin{rmk}
 Suppose that $F|\Q_p$ is an unramified extension, so that $p\in\O_F$ is a uniformiser.
 In this case,
 we have that $\omega_{H,\upsilon}/[\pi]\omega_{H,\upsilon}=\omega_{H,\upsilon}/p\omega_{H,\upsilon}$
 for every embedding~$\upsilon$ of $F^\nr=F$ in $K_0$.
 In particular, one sees that for $(H,\iota)\in\pdiv_{\O_K,\O_F}$:
 \[
  \Hdg^\int(H,\iota)_\upsilon\coloneqq(1,\dots,1,0,\dots,0),
 \]
 the number of $1$'s being equal to $\rk_{\O_K}\omega_{H,\upsilon}$.
 By the identification~\eqref{Die-ctg-ups},
 this number is also equal to $\dim_\k\D(H_\k)_\upsilon/\phi_{H_\k}\D(H_\k)_{\sigma^{-1}\upsilon}$.
 Thus, recalling the observation in Remark~\ref{Hdg-unr} and taking the average over $\upsilon$,
 we obtain that in this case $\Hdg^\int(H,\iota)=\Hdg(H_\k,\iota)$.
 Putting this together with the considerations of Remark~\eqref{PR-unr},
 we conclude that if $F|\Q_p$ is unramified then $\Hdg^\int(H,\iota)=\Hdg(H_\k,\iota)=\PR(H,\iota)$.
\end{rmk}

\begin{rmk}\label{Hpi}
 Let $H[\pi]$ denote the kernel of the endomorphism~$\iota(\pi)\colon H\to H$,
 a closed sub-group-scheme of $H[p]$.
 Note that $\iota(\pi^{e(F|\Q_p)-1})$ induces an epimorphism $H[p]\to H[\pi]$,
 so that the Hopf algebra of $H[\pi]$ injects into that of $H[p]$ and
 is therefore a torsion free $\O_K$-module;
 since $\O_K$ a valuation ring,
 it follows that $H[\pi]$ is locally free over $\O_K$, i.e.\ it is a $p$-group over $\O_K$.
 The $\O_F$-action on $H$ clearly restricts to $H[\pi]$.
 However, since by definition $\pi$ acts trivially on this object,
 there is no loss of information in considering $H[\pi]$ equipped only with the restriction
 $\iota^\nr\colon\O_{F^\nr}\to\End(H[\pi])$ of the induced action to the unramified part.
 Observe now that we have an $\O_{F^\nr}$-equivariant identification:
 \begin{equation}\label{ctg-pi-Hpi}
  \omega_{H[\pi]}\cong\omega_H/[\pi]\omega_H.
 \end{equation}
 In particular, for $(H,\iota)\in\pdiv_{\O_K,\O_F}$ the polygon~$\Hdg^\int(H,\iota)$ only depends on
 the object~$(H[\pi],\iota^\nr)\in\pgr_{\O_K,\O_{F^\nr}}$.
 In fact, if $F|\Q_p$ is totally ramified,
 then $\Hdg^\int(H,\iota)$ is equal to the Hodge polygon of the $p$-group $H[\pi]$ over $\O_K$,
 as defined by Fargues in \cite[\S 8.2]{Fa1}.
 In general, $\Hdg^\int(H,\iota)$ coincides with the Hodge polygon of the $p$-group with
 unramified endomorphism structure $(H[\pi],\iota^\nr)$ over $\O_K$,
 as considered by Shen in \cite[Definition~3.8]{XS}.
 Here, we really want to view this polygon as an invariant of $(H,\iota)$,
 using it to describe properties of the whole $p$-divisible group with endomorphism structure and
 relating it to other invariants thereof.
\end{rmk}

\subsection{Comparison with the Harder-Narasimhan polygons}

Let us compare $\Hdg^\int(H,\iota)$ with the Harder-Narasimhan polygons recalled in \S\ref{S-HN}.
Consider the object~$(H[\pi],\iota^\nr)\in\pgr_{\O_K,\O_{F^\nr}}$ introduced in Remark~\ref{Hpi}.
More generally, for $i\ge1$ we can consider:
\[
 H[\pi^i]\coloneqq\Ker(\iota(\pi^i)\colon H\to H)
\]
and similarly observe that this is a $p$-group over $\O_K$.
For $1\le j<i$, then, we have exact sequences:
\begin{equation}\label{Hpi-ses}
 0\longrightarrow H[\pi^j]\longrightarrow H[\pi^i]\xlongrightarrow{\iota(\pi)^j}
  H[\pi^{i-j}]\longrightarrow0.
\end{equation}
Since $H[\pi^{e(F|\Q_p)}]=H[p]$,
an inductive reasoning shows that $\height H[\pi]=\height H/e(F|\Q_p)$.
In particular, the function $\HN(H[\pi],\iota^\nr)$ is defined on $[0,n]$ and
can be compared with $\Hdg^\int(H,\iota)$.
In light of Remark~\ref{Hpi},
the following statement really concerns $(H[\pi],\iota^\nr)\in\pgr_{\O_K,\O_{F^\nr}}$ and,
as such, it follows from \cite[Proposition~3.10]{XS}.
We will anyway sketch its proof for completeness.

\begin{prp}[\cite{XS}, Proposition~3.10]\label{Hdgi-Hpi}
 Let $(H,\iota)$ be a $p$-divisible group over $\O_K$ with endomorphism structure for $\O_F$.
 Then:
 \[
  \HN(H[\pi],\iota^\nr)\le\Hdg^\int(H,\iota).
 \]
\end{prp}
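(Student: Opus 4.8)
The plan is to work throughout with the $p$-group $(C,\iota^\nr)\coloneqq(H[\pi],\iota^\nr)\in\pgr_{\O_K,\O_{F^\nr}}$, to which the statement really refers by Remark~\ref{Hpi}; recall in particular the $\O_{F^\nr}$-equivariant identification $\omega_C\cong\bigoplus_\upsilon\bar{\omega}_{H,\upsilon}$ and that $\Hdg^\int(H,\iota)=\frac1f\sum_\upsilon\Hdg^\int(H,\iota)_\upsilon$, where I write $f\coloneqq f(F|\Q_p)$. By the Harder-Narasimhan formalism the polygon $\HN(C,\iota^\nr)$ is the upper concave boundary of the points $(\height C'/f,\deg C'/f)$ attached to the subobjects $C'\subseteq C$, and by \cite[Corollaire~10]{Fa1} its break points are realised by $\iota^\nr$-stable subgroups. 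Since the end points of $\HN(C,\iota^\nr)$ and $\Hdg^\int(H,\iota)$ coincide (both equal $\dim H/d$ at $x=n$, using $\deg C=\frac1e\dim H$), it suffices to show that every $\iota^\nr$-stable subgroup $C'\subseteq C$ satisfies $\deg C'\le\sum_\upsilon\Hdg^\int(H,\iota)_\upsilon(\height C'/f)$.

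For such a $C'$, the $\O_{F^\nr}$-action splits $\omega_{C'}=\bigoplus_\upsilon\omega_{C',\upsilon}$, so that $\deg C'=\sum_\upsilon v(\Fitt_0\omega_{C',\upsilon})$. The closed immersion $C'\hookrightarrow C$ induces an $\O_{F^\nr}$-equivariant surjection $\omega_C\twoheadrightarrow\omega_{C'}$, hence for each $\upsilon$ a surjection $\bar{\omega}_{H,\upsilon}\twoheadrightarrow\omega_{C',\upsilon}$. The target is generated by $s_\upsilon\coloneqq\dim_\k(\omega_{C',\upsilon}\otimes_{\O_K}\k)$ elements, and the elementary divisors of a quotient are bounded, term by term, by those of the source; this yields $v(\Fitt_0\omega_{C',\upsilon})\le\Hdg^\int(H,\iota)_\upsilon(s_\upsilon)$ and therefore the preliminary bound $\deg C'\le\sum_\upsilon\Hdg^\int(H,\iota)_\upsilon(s_\upsilon)$. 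This is the elementary-divisor input, essentially Fargues' comparison of the Hodge and Harder-Narasimhan polygons of a single $p$-group (see \cite[\S 8]{Fa1}) applied component by component.

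The main obstacle is to pass from this component-wise bound to the balanced one. The desired inequality would follow by concavity if the $s_\upsilon$ were all $\le\height C'/f$, but a priori only the total is controlled, namely $\sum_\upsilon s_\upsilon=\dim_\k(C'_\k)\le\height C'$; since the functions $\Hdg^\int(H,\iota)_\upsilon$ genuinely differ across $\upsilon$ (this is precisely the ramified phenomenon measured by the integral Hodge polygon), an unbalanced distribution of the $s_\upsilon$ could a priori make $\sum_\upsilon\Hdg^\int(H,\iota)_\upsilon(s_\upsilon)$ overshoot the target. Resolving this is exactly where the argument must use the $\O_{F^\nr}$-structure beyond the non-equivariant case: the relative Frobenius of $C'$ is $\O_{F^\nr}$-equivariant and cyclically permutes the embeddings via $\upsilon\mapsto\sigma\upsilon$, which forces the dimensions $s_\upsilon$ and the partial degrees $v(\Fitt_0\omega_{C',\upsilon})$ to interleave along each Frobenius orbit and thereby constrains $\deg C'$ by the averaged quantity $\sum_\upsilon\Hdg^\int(H,\iota)_\upsilon(\height C'/f)$. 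This balancing is the technical heart of the statement, carried out in \cite[Proposition~3.10]{XS}; granting it, dividing by $f$ and comparing at break points gives $\HN(C,\iota^\nr)\le\Hdg^\int(H,\iota)$, with equality at the end point. (In the special case that $C$ is semistable the first reduction already suffices, since $\HN(C,\iota^\nr)$ is then the chord of $\Hdg^\int(H,\iota)$, which lies below it by concavity.)
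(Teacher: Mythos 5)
Your proposal follows the same skeleton as the paper's proof: equality of end points, reduction via \cite[Corollaire~10]{Fa1} to bounding $\deg C'$ for $\iota^\nr$-stable subgroups $C'\subseteq C=H[\pi]$, and the componentwise estimate $v(\Fitt_0\omega_{C',\upsilon})\le\Hdg^\int(H,\iota)_\upsilon(s_\upsilon)$ coming from the surjections $\bar{\omega}_{H,\upsilon}\twoheadrightarrow\omega_{C',\upsilon}$ and elementary divisors. You also correctly identify the crux: controlling only $\sum_\upsilon s_\upsilon$ is not enough. The gap is in how you dispose of this crux. What is actually needed --- and what the paper states in precise form, citing the proof of \cite[Proposition~3.10]{XS} --- is the \emph{per-component} bound: for every $\upsilon$, the $\O_K$-module $\omega_{C',\upsilon}$ is generated by $\height C'/f(F|\Q_p)$ elements, i.e.\ $s_\upsilon\le\height C'/f(F|\Q_p)$. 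Granting this, the conclusion is immediate, since each polygon $\Hdg^\int(H,\iota)_\upsilon$ is non-decreasing and one simply sums the inequalities $\Hdg^\int(H,\iota)_\upsilon(s_\upsilon)\le\Hdg^\int(H,\iota)_\upsilon(\height C'/f(F|\Q_p))$ over $\upsilon$; no ``interleaving of the $s_\upsilon$ and the partial degrees along Frobenius orbits'' occurs anywhere. Your proof neither states nor proves this lemma, but replaces it by a heuristic.

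More importantly, the mechanism you gesture at cannot prove that lemma. Equivariance of the relative Frobenius is a statement about the reduction modulo $p$, and over $\k$ alone the components can be arbitrarily unbalanced: for instance $\O_{F^\nr}$ may act on $\alpha_p^{\oplus f(F|\Q_p)}$ through a single embedding $\O_{F^\nr}\to W(\k)\to\k$ (scalars), in which case all of $\omega$ sits in one $\upsilon$-component while the (zero) Frobenius is trivially equivariant; so Frobenius equivariance forces no balancing whatsoever. What rules out such behaviour for $C'$ is flatness over the mixed-characteristic ring $\O_K$ together with étaleness of the generic fibre, where $C'(\overline{K})$ is an $\F_{p^{f(F|\Q_p)}}$-vector space of dimension $\height C'/f(F|\Q_p)$. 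This is the genuinely integral input of \cite[Proposition~3.10]{XS}: one way to obtain it is, after enlarging $K$, to take schematic closures of a full flag of $\k_F$-stable subspaces of $C'(\overline{K})$, reducing by right exactness of $G\mapsto\omega_G$ to $\F_{p^{f(F|\Q_p)}}$-vector-space schemes of height one in the sense of Raynaud, for which each component of $\omega$ is cyclic. To complete your argument you must either import the per-component generation bound from \cite{XS} in this precise form (as the paper does) or reprove it along such lines; the special-fibre Frobenius argument is not a substitute.
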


\begin{proof}
 First, the $\O_{F^\nr}$-equivariant identification~\eqref{ctg-pi-Hpi} implies:
 \[
  \Hdg^\int(H,\iota)(n)=\frac{1}{f(F|\Q_p)}\deg H[\pi]=\HN(H[\pi],\iota^\nr)(n).
 \]
 Next, let $H'\subseteq H[\pi]$ be a step of the Harder-Narasimhan filtration of $H[\pi]$;
 in particular, $H'$ is $\iota^\nr$-stable.
 We have to check that:
 \[
  \frac{\deg H'}{f(F|\Q_p)}\le\Hdg^\int(H,\iota)\left(\frac{\height H'}{f(F|\Q_p)}\right),
 \]
 i.e.\ that:
 \[
  \deg H'\le\sum_\upsilon\Hdg^\int(H,\iota)_\upsilon\left(\frac{\height H'}{f(F|\Q_p)}\right),
 \]
 where $\upsilon$ ranges through the $f(F|\Q_p)$ embeddings of $F^\nr$ in $K_0$.
 In fact, the inclusion $H'\subseteq H[\pi]$ induces a surjection:
 \[
  \omega_H/[\pi]\omega_H\cong\omega_{H[\pi]}\longrightarrow\omega_{H'},
 \]
 which splits into surjections~$\omega_{H,\upsilon}/[\pi]\omega_{H,\upsilon}\to\omega_{H',\upsilon}$
 for all $\upsilon$'s.
 The result follows then from basic properties of Fitting ideals with respect to exact sequences,
 together with the observation that, for every $\upsilon\colon F^\nr\to K_0$,
 the $\O_K$-module~$\omega_{H',\upsilon}$ is generated by $\height H'/f(F|\Q_p)$ elements
 (see the proof of \cite[Proposition~3.10]{XS}).
\end{proof}

\begin{cor}\label{Hdgi-HN}
 Let $(H,\iota)$ be a $p$-divisible group over $\O_K$ with endomorphism structure for $\O_F$.
 Then:
 \[
  \HN(H[p],\iota)\le\Hdg^\int(H,\iota).
 \]
 In particular:
 \[
  \HN(H,\iota)\le\Hdg^\int(H,\iota).
 \]
\end{cor}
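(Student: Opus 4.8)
The corollary asks to deduce $\HN(H[p],\iota)\le\Hdg^\int(H,\iota)$ from the already-established inequality $\HN(H[\pi],\iota^\nr)\le\Hdg^\int(H,\iota)$ of Proposition~\ref{Hdgi-Hpi}. The natural strategy is to relate the Harder-Narasimhan polygon of $H[p]$ (with its full $\O_F$-structure) to that of $H[\pi]$ (with only its unramified structure $\iota^\nr$), using the chain of $\O_F$-stable subgroups $H[\pi]\subseteq H[\pi^2]\subseteq\dots\subseteq H[\pi^{e}]=H[p]$, where $e=e(F|\Q_p)$. The key input is the exact sequence~\eqref{Hpi-ses}, which for $1\le j<e$ exhibits $H[\pi^{i-j}]$ as a quotient of $H[\pi^i]$ via $\iota(\pi)^j$, with kernel $H[\pi^j]$. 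Since $\iota(\pi)$ is an $\O_F$-equivariant isogeny of $H$, all the maps $\iota(\pi)^j\colon H[\pi^i]\to H[\pi^{i-j}]$ are $\O_{F^\nr}$-equivariant epimorphisms of $p$-groups, and in particular each successive quotient $H[\pi^{i}]/H[\pi^{i-1}]$ is $\O_{F^\nr}$-equivariantly isomorphic to $H[\pi]$.

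First I would record the numerical normalisations. Because $\HN(H[p],\iota)$ is defined on $[0,n]$ via dividing by $d=ef(F|\Q_p)$, while $\HN(H[\pi],\iota^\nr)$ is defined on the same interval $[0,n]$ via dividing by $f(F|\Q_p)$ and using $\height H[\pi]=\height H/e$, the two polygons live on the same interval and can be compared directly against $\Hdg^\int(H,\iota)$. The endpoints already agree with that of $\Hdg^\int(H,\iota)$, namely $\dim H/d$, by the endpoint computations in \S\ref{S-HN} and equation~\eqref{Hdgi-ups-end}. The substance is therefore to show $\HN(H[p],\iota)\le\HN(H[\pi],\iota^\nr)$ as functions on $[0,n]$; combined with Proposition~\ref{Hdgi-Hpi} this yields the claim, since $\le$ is transitive on concave functions with matching endpoints. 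The final assertion $\HN(H,\iota)\le\Hdg^\int(H,\iota)$ is then immediate, because $\HN(H,\iota)$ is the infimum (hence $\le$) of the renormalised $\HN^\r(H[p^i],\iota)$, and in particular $\HN(H,\iota)\le\HN(H[p],\iota)$.

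To obtain $\HN(H[p],\iota)\le\HN(H[\pi],\iota^\nr)$, I would exploit that $H[p]$ is $\O_F$-equivariantly built out of $e$ copies of $H[\pi]$. Concretely, the successive quotients in the $\iota(\pi)$-adic filtration $0\subseteq H[\pi]\subseteq\dots\subseteq H[\pi^e]=H[p]$ are each $\O_{F^\nr}$-equivariantly isomorphic to $H[\pi]$ via the isogenies $\iota(\pi)^j$. A clean way to package this is to invoke the compatibility of $\HN$ with the slope renormalisation together with Fargues' formalism: the Harder-Narasimhan polygon of an object admitting a filtration with all graded pieces isomorphic to a fixed $C$ lies below the ``$e$-fold concatenation/average'' of $\HN(C)$, and after the renormalisation by $d$ versus $f(F|\Q_p)$ this average is exactly $\HN(H[\pi],\iota^\nr)$. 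Alternatively, and perhaps more robustly, I would argue pointwise: for any $\iota$-stable step $C'\subseteq H[p]$ of its Harder-Narasimhan filtration, I intersect with the filtration by the $H[\pi^i]$ and use additivity of $\deg$ in short exact sequences to bound $\deg C'$ in terms of degrees of subquotients, each of which is an $\iota^\nr$-stable subobject of a copy of $H[\pi]$; the defining inequality of $\Hdg^\int$-domination for $H[\pi]$ then propagates.

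The main obstacle I anticipate is the bookkeeping in this last step: reconciling the two normalisations and showing that the filtration of $H[p]$ by the $H[\pi^i]$ interacts correctly with the (a priori unrelated) Harder-Narasimhan filtration of $H[p]$. The cleanest route is likely to avoid comparing filtrations directly and instead compare both polygons to $\Hdg^\int$ through Proposition~\ref{Hdgi-Hpi}, reducing everything to the inequality $\HN(H[p],\iota)\le\HN(H[\pi],\iota^\nr)$ and proving the latter by the general fact that in Fargues' formalism, if $C$ admits an $\iota$-stable filtration with graded pieces all $\O_{F^\nr}$-isomorphic to a fixed $D$, then the renormalised $\HN(C)$ lies below $\HN(D)$. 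Verifying that this general fact applies — i.e.\ that the graded pieces $H[\pi^i]/H[\pi^{i-1}]$ are genuinely $\O_{F^\nr}$-equivariantly isomorphic to $H[\pi]$ via~\eqref{Hpi-ses}, and that the degree function behaves additively across the filtration — is where I would concentrate the care.
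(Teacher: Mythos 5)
Your proposal is correct and follows essentially the same route as the paper: the paper's proof also reduces to showing $\HN(H[p],\iota)\le\HN(H[\pi],\iota^\nr)$ and obtains this by repeated application of Fargues' result on short exact sequences (\cite[Proposition~9]{Fa1}) to the sequences~\eqref{Hpi-ses} with $i=e(F|\Q_p)$, which is precisely the ``general fact'' about filtrations with graded pieces isomorphic to $H[\pi]$ that you invoke, with the same renormalisation bookkeeping. The only difference is that the paper cites the exact statement in \cite{Fa1} rather than leaving it as a formalism-level fact to verify.
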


\begin{proof}
 By repeated applications of \cite[Proposition~9]{Fa1} to
 the exact sequences~\eqref{Hpi-ses} with $i=e(F|\Q_p)$,
 we find that $\HN(H[p],\iota)\le\HN(H[\pi],\iota^\nr)$.
\end{proof}

\begin{rmk}\label{Hdgi-HN-rmk}
 The previous corollary provides a sufficient condition for $(H,\iota)$ to be of HN type,
 namely that $\Hdg^\int(H,\iota)=\HN(H,\iota)$.
 Indeed, since $\HN(H[p],\iota)$ is squeezed between these two polygons,
 their equality implies that $\HN(H[p],\iota)=\HN(H,\iota)$.

 More specifically, if a break point of $\HN(H,\iota)$ lies on $\Hdg^\int(H,\iota)$,
 then the previous corollary implies that it also lies on $\HN(H[p],\iota)$.
 This is then sufficient to find an $\iota$-stable sub-$p$-divisible group of $H$ corresponding to
 the break point at hand (see \cite[Corollary~3.4]{AM}, at least if the valuation~$v$ is discrete).
\end{rmk}

\begin{rmk}
 One could look for a relation between the polygons~$\HN(H[p],\iota)$ and $\Hdg(H_\k,\iota)$.
 This is in general hopeless, as the following example shows.
 Assume that $F|\Q_p$ is totally ramified of degree $d=2$.
 Suppose moreover that $\height H=4=2d$, $\dim H=2$ and that, with notation as in \S\ref{S-PR},
 we have $r_\tau=1$ for both embeddings~$\tau\colon F\to K$.
 One can attach to $H$ two ``primitive Hasse invariants'' $m,hasse\in[0,1/2]$,
 such that the usual Hasse invariant of $H$ is given by $ha(H)=2hasse+(p+1)m\in[0,1]$
 (see \cite[\S1]{Bi}).
 Assume that $ha(H)<1/2$ and $p>3$.
 This ensures the existence of the ``canonical subgroup'' of $H[p]$ and
 hence that $\HN(H[p],\iota)(1)=1-ha(H)/2$ (see \cite[Théorème~6]{Fa2}).
 Note that under our assumptions we have $3/4<1-ha(H)/2<1$.
 Now, by \cite[Proposition~1.6]{Bi} we know that $m=0$ if and only if $\omega_H$ is free as
 an $\O_F\otimes_{\Z_p}\O_K$-module.
 Thus,
 if $m=0$ the polygon~$\Hdg(H_\k,\iota)$ has slopes $(1,0)$ and is therefore above $\HN(H[p],\iota)$,
 whereas if $m>0$ then $\Hdg(H_\k,\iota)=(1/2, 1/2)$ is below $\HN(H[p],\iota)$.
\end{rmk}

\subsection{Comparison with the Pappas-Rapoport polygon}

We will now compare the integral Hodge polygon of
$(H,\iota)\in\pdiv_{\O_K,\O_F}$ with its Pappas-Rapoport polygon.
For this purpose, we are going to need the following lemma.

\begin{lem}\label{Hdgi-PR-lem}
 Let $M'\to M$ be an injective homomorphism of finitely presented torsion $\O_K$-modules and
 suppose we are given presentations:
 \begin{equation}\label{pres}
  M\cong\bigoplus_{j=1}^{m}\O_K/a_j\O_K, \qquad M'\cong\bigoplus_{j=1}^{m'}\O_K/b_j\O_K,
 \end{equation}
 for some $m,m'\in\N$ and $a_1,\dots,a_m,b_1,\dots,b_{m'}\in\O_K\setminus\Set{0}$ with
 $v(a_1)\ge\dots\ge v(a_m)$ and $v(b_1)\ge\dots\ge v(b_{m'})$.
 Then, for all $j=1,\dots,\min\Set{m,m'}$ we have that $v(b_j)\le v(a_j)$ and,
 if $m'>m$, then for all $j=m+1,\dots,m'$ we have that $v(b_j)=0$.
\end{lem}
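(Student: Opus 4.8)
The plan is to show that both assertions of the lemma are equivalent to a single inequality between ``counting functions'' of the invariant factors, and then to establish that inequality by realising the counts as dimensions of naturally functorial $\k$-vector spaces.

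First I would reformulate the statement. For a finitely presented torsion $\O_K$-module $N$ with invariant factors of valuations $v(c_1)\ge\dots\ge v(c_k)$, and for $\lambda$ in the value group $v(K^\times)$ with $\lambda>0$, set $N(\lambda)\coloneqq\#\Set{j|v(c_j)\ge\lambda}$. A short check shows that the conjunction of the two claims — namely $v(b_j)\le v(a_j)$ for $j\le\min\Set{m,m'}$ and $v(b_j)=0$ for $m<j\le m'$ — is equivalent to the pointwise inequality $M'(\lambda)\le M(\lambda)$ for every such $\lambda$. Indeed, using the convention $v(a_j)=0$ for $j>m$, the termwise inequality $v(b_j)\le v(a_j)$ for all $j$ holds precisely when, for each $\lambda>0$, the number of $b$'s of valuation $\ge\lambda$ does not exceed the number of $a$'s of valuation $\ge\lambda$ (forward: the indices with $v(b_j)\ge\lambda$ are the first $M'(\lambda)$, and each then has $v(a_j)\ge v(b_j)\ge\lambda$; backward: take $\lambda=v(b_j)$ and read off $v(a_j)\ge\lambda$). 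The remaining case $j>m$ with $v(b_j)>0$ would force $M'(\lambda)>m\ge M(\lambda)$ at $\lambda=v(b_j)$, so it is excluded by the same inequality.

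Next I would compute $N(\lambda)$ intrinsically. Fix $c\in\O_K$ with $v(c)=\lambda$ and consider, inside $N[c]\coloneqq\Ker(c\colon N\to N)$, the submodule $N[\mathfrak m_{<\lambda}]\coloneqq\bigcup_{0<v(c')<\lambda}N[c']$ of elements annihilated by some element of valuation strictly below $\lambda$. Working summand by summand on the presentation \eqref{pres}, one verifies that the quotient $N[c]/N[\mathfrak m_{<\lambda}]$ is annihilated by $\mathfrak m$, hence is a $\k$-vector space, and that $\dim_\k N[c]/N[\mathfrak m_{<\lambda}]=N(\lambda)$: a cyclic summand $\O_K/c_j\O_K$ contributes a one-dimensional space exactly when $v(c_j)\ge\lambda$ and nothing otherwise. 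When $v$ is discrete with uniformiser $\varpi$ and $\lambda=s\,v(\varpi)$, this is simply the familiar graded piece $N[\varpi^s]/N[\varpi^{s-1}]$, whose dimension counts the invariant factors of valuation $\ge\lambda$.

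Finally I would deduce monotonicity from the injection $M'\hookrightarrow M$. The key point is that the induced map $M'[c]/M'[\mathfrak m_{<\lambda}]\to M[c]/M[\mathfrak m_{<\lambda}]$ is again injective: since $M'\hookrightarrow M$, an element of $M'[c]$ is killed by some element of valuation $<\lambda$ in $M'$ if and only if it is killed by such an element in $M$, i.e.\ $M'[\mathfrak m_{<\lambda}]=M'[c]\cap M[\mathfrak m_{<\lambda}]$, so the map has trivial kernel. Taking $\dim_\k$ yields $M'(\lambda)\le M(\lambda)$ for all $\lambda>0$, which by the first step proves both claims. The main obstacle I anticipate is purely the bookkeeping in the non-discretely-valued case: one cannot use powers of a single uniformiser, so the role of $\varpi^{s-1}$ must be played by the ``open'' annihilation condition defining $N[\mathfrak m_{<\lambda}]$, and one must verify carefully that this quotient is a $\k$-vector space of the asserted dimension and that the intersection identity $M'[\mathfrak m_{<\lambda}]=M'[c]\cap M[\mathfrak m_{<\lambda}]$ holds verbatim.
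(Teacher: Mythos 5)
Your proof is correct, and it takes a genuinely different route from the paper's. The paper works directly with the presentations: after padding so that $m'=m$, it notes that any relation $\sum_k c_ke_k=0$ among the standard generators of $M'$ forces $v(b_k)\le v(c_k)$ for all $k$, and then, for each $j$, manufactures such a relation with coefficients $c_ka_j$, $k\le j$, at least one $c_{k_0}$ a unit: the images of $a_je_1,\dots,a_je_j$ in $M$ lie in the first $j-1$ cyclic summands (the remaining ones being $a_j$-torsion), so their lifts to $\O_K^{j-1}$ admit a nontrivial relation with a unit coefficient, which projects back to $M$ and, by injectivity, holds in $M'$; this yields $v(b_j)\le v(b_{k_0})\le v(a_j)$, the case $j>m$ coming out of the same argument via the padding. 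You instead make the counting function intrinsic, realising $\#\Set{j|v(b_j)\ge\lambda}$ as $\dim_\k M'[c]/M'[\mathfrak{m}_{<\lambda}]$, and deduce monotonicity under the injection from the identity $M'[\mathfrak{m}_{<\lambda}]=M'[c]\cap M[\mathfrak{m}_{<\lambda}]$, which holds because annihilators of elements of $M'$ agree whether computed in $M'$ or in $M$. All your supporting claims check out: the equivalence of the two assertions of the lemma with $M'(\lambda)\le M(\lambda)$ for all $\lambda>0$ in the value group; the fact that $N[\mathfrak{m}_{<\lambda}]$ is a submodule of $N[c]$ commuting with finite direct sums (for finitely many elements one takes a common annihilator of maximal valuation $<\lambda$); and the cyclic computation giving exactly one $\k$-dimension per invariant factor of valuation $\ge\lambda$. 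Your route is longer but buys conceptual content: it shows that the valuations $v(b_j)$ are intrinsic invariants of $M'$ independent of the chosen presentation, it treats both assertions uniformly, and it is the correct generalisation to non-discrete valuations of counting torsion by graded pieces --- the same device the paper uses, in the discretely valued case, to describe $\Hdg^\int(H,\iota)_\upsilon$ via the quotients $\bar{\omega}_{H,\upsilon}[\varpi^j]/\bar{\omega}_{H,\upsilon}[\varpi^{j-1}]$, with your directed union $N[\mathfrak{m}_{<\lambda}]=\bigcup_{0<v(c')<\lambda}N[c']$ playing the role of $N[\varpi^{j-1}]$. The paper's argument, by contrast, is shorter and entirely self-contained, requiring nothing beyond manipulations of relations in free modules over a valuation ring.
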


\begin{proof}
 Note first that by possibly adding trivial summands to either of the two presentations,
 we reduce to the case that $m'=m$.

 Let then $e_1,\dots,e_m$ be the standard basis of $M'$ with respect to the given presentation.
 In particular,
 if these elements satisfy a linear equation~$\sum_{j=1}^m c_je_j=0$ for some $c_1,\dots,c_m\in\O_K$,
 then $v(b_j)\le v(c_j)$ for all $j=1,\dots,m$.

 Now, since $M$ is an $a_1$-torsion module and $M'\to M$ is injective,
 we have that $a_1e_1=0$, so that $v(b_1)\le v(a_1)$.
 Fix then $j\in\Set{2,\dots,m}$ and note that the images of $a_je_1,\dots,a_je_j$ in $M$
 are all contained in the submodule $\bigoplus_{l=1}^{j-1}\O_K/a_l\O_K\subseteq M$,
 because the complementary submodule $\bigoplus_{l=j}^m\O_K/a_l\O_K$ is $a_j$-torsion.
 Choosing lifts via the standard surjection~$\O_K^{j-1}\to\bigoplus_{l=1}^{j-1}\O_K/a_l\O_K$,
 we obtain $j$ elements of a free $\O_K$-module of rank $j-1$.
 In particular, these elements must satisfy a nontrivial linear equation,
 where we may assume that at least one coefficient is a unit.
 By projecting this equation back to $\bigoplus_{l=1}^{j-1}\O_K/a_l\O_K$ and
 due to the injectivity of $M'\to M$,
 we find that $\sum_{k=1}^j c_ka_je_k=0$ for some $c_1,\dots,c_j\in\O_K$,
 with $c_{k_0}\in\O_K^\times$ for some $k_0\in\Set{1,\dots,j}$.
 It follows that $v(b_{k_0})\le v(c_{k_0}a_j)=v(a_j)$ and hence that $v(b_j)\le v(b_{k_0})\le v(a_j)$.
 This completes the proof of the lemma.
\end{proof}

We now have all the tools that we need in order to prove the following statement.

\begin{thm}\label{Hdgi-PR}
 Let $(H,\iota)$ be a $p$-divisible group over $\O_K$ with endomorphism structure for $\O_F$.
 Then:
 \[
  \Hdg^\int(H,\iota)\le\PR(H,\iota).
 \]
\end{thm}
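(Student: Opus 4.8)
The plan is to reduce the inequality to a comparison for each embedding $\upsilon$ of $F^\nr$ in $K_0$ separately, and then to recognise that comparison as an instance of the subadditivity of elementary divisors under extensions. Write $e\coloneqq e(F|\Q_p)$ and $f\coloneqq f(F|\Q_p)$, and for a finitely presented torsion $\O_K$-module $M$ let $\delta(M)\in\R^{\bullet}_+$ denote its decreasing elementary-divisor polygon. Since $\PR(H,\iota)=\frac1f\sum_\upsilon P_\upsilon$ with $P_\upsilon\coloneqq\frac1e\sum_{\tau|\upsilon}\PR(H,\iota)_\tau$, since $\Hdg^\int(H,\iota)=\frac1f\sum_\upsilon\Hdg^\int(H,\iota)_\upsilon$, and since each pair $\Hdg^\int(H,\iota)_\upsilon$, $P_\upsilon$ shares the endpoint $r_\upsilon/e$ with $r_\upsilon=\rk_{\O_K}\omega_{H,\upsilon}=\sum_i r_{\upsilon,i}$, additivity of $\le$ in $\R^n_+$ reduces the claim to proving $\Hdg^\int(H,\iota)_\upsilon\le P_\upsilon$ for each $\upsilon$. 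Here $\Hdg^\int(H,\iota)_\upsilon=\delta(\bar\omega_{H,\upsilon})$ with $\bar\omega_{H,\upsilon}=\omega_{H,\upsilon}/[\pi]\omega_{H,\upsilon}$, while $P_\upsilon\colon x\mapsto\frac1e\sum_i\min\{x,r_{\upsilon,i}\}$ and $r_{\upsilon,i}=\dim_K\omega_{H,K,\tau_{\upsilon,i}}$.

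Next I would exhibit a convenient filtration of $\bar\omega_{H,\upsilon}$. The filtration~\eqref{PR-fil} is $\O_F$-stable, with $\omega_{\upsilon,i}/\omega_{\upsilon,i-1}$ free of rank $r_{\upsilon,i}$ and $\O_F$ acting on it through $\tau_{\upsilon,i}$, as recorded in~\S\ref{S-Hdg-PR}. Because $[\pi]$ preserves each $\omega_{\upsilon,i}$ and is injective, applying the snake lemma to $[\pi]$ acting on $0\to\omega_{\upsilon,i-1}\to\omega_{\upsilon,i}\to\omega_{\upsilon,i}/\omega_{\upsilon,i-1}\to0$ — where it induces multiplication by $\tau_{\upsilon,i}(\pi)$ on the quotient — produces short exact sequences $0\to\bar\omega_{\upsilon,i-1}\to\bar\omega_{\upsilon,i}\to(\O_K/\tau_{\upsilon,i}(\pi))^{r_{\upsilon,i}}\to0$, where $\bar\omega_{\upsilon,i}\coloneqq\omega_{\upsilon,i}/[\pi]\omega_{\upsilon,i}$ and $\bar\omega_{\upsilon,e}=\bar\omega_{H,\upsilon}$. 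Thus $\bar\omega_{H,\upsilon}$ carries a filtration whose $i$-th graded piece $G_i\coloneqq(\O_K/\tau_{\upsilon,i}(\pi))^{r_{\upsilon,i}}$ satisfies $\delta(G_i)\colon x\mapsto\frac1e\min\{x,r_{\upsilon,i}\}$, each of its $r_{\upsilon,i}$ divisors having valuation $v(\tau_{\upsilon,i}(\pi))=1/e$.

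The crux, and the step I expect to be the main obstacle, is a subadditivity property: for any short exact sequence $0\to A\to B\to C\to0$ of finitely presented torsion $\O_K$-modules, one has $\delta(B)\le\delta(A)+\delta(C)$ in $\R^n_+$ (the endpoints agree as $\deg$ is additive). Granting this and iterating over the filtration yields $\Hdg^\int(H,\iota)_\upsilon=\delta(\bar\omega_{H,\upsilon})\le\sum_i\delta(G_i)=P_\upsilon$, which finishes the proof after averaging. Note that a single injection of $\bar\omega_{H,\upsilon}$ into a module with polygon $P_\upsilon$ cannot suffice — it already fails when the filtration splits — so a genuine dominance argument is required rather than a termwise one.

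To establish the subadditivity I would use the characterisation $\sum_{j=1}^k\lambda_j(M)=\max\{\deg Q\mid M\twoheadrightarrow Q,\ Q\ \text{generated by}\ \le k\ \text{elements}\}$ of the partial sums of the decreasing elementary-divisor valuations $\lambda_j(M)$. Choosing an optimal $k$-generated quotient $Q$ of $B$, the images of $A$ and $C$ split it into $0\to Q_A\to Q\to Q_C\to0$, where $Q_C$ is a $\le k$-generated quotient of $C$ and $Q_A$ is a $\le k$-generated quotient of $A$; hence $\deg Q=\deg Q_A+\deg Q_C\le\sum_{j\le k}\lambda_j(A)+\sum_{j\le k}\lambda_j(C)$, the bounds on $\deg Q_A$ and $\deg Q_C$ coming from Lemma~\ref{Hdgi-PR-lem} applied via duality (the divisors of a quotient are dominated by those of the module). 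Some care is needed to ensure that these manipulations — existence of elementary divisors, and that a submodule of a $k$-generated module is again $k$-generated — remain valid over the possibly non-discrete valuation ring $\O_K$, but this is standard for finitely presented torsion modules in the present setting.
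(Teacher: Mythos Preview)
Your argument is correct and takes a genuinely different route from the paper. Both proofs begin by reducing to the $\upsilon$-components, but then diverge. The paper fixes $i$ and a subset $I\subseteq\{\tau:\tau|\upsilon\}$, considers the direct summand $\omega_I=\omega_{H,\upsilon}\cap\bigoplus_{\tau\in I}\omega_{H,K,\tau}$, and combines the torsion bound on $\bar\omega_I$ from Lemma~\ref{Hdg-PR-lem} with the elementary-divisor comparison of Lemma~\ref{Hdgi-PR-lem} for the injection $\bar\omega_I\hookrightarrow\bar\omega_{H,\upsilon}$ to bound $\sum_{j\le i}v(a_j)$ directly. You instead push the Pappas--Rapoport filtration~\eqref{PR-fil} through the snake lemma to filter $\bar\omega_{H,\upsilon}$ with graded pieces $(\O_K/\tau_{\upsilon,i}(\pi))^{r_{\upsilon,i}}$, and then invoke a general subadditivity lemma $\delta(B)\le\delta(A)+\delta(C)$ for short exact sequences of finitely presented torsion $\O_K$-modules. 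Your approach is more modular---the subadditivity statement is a clean standalone fact and the deduction from it is immediate---whereas the paper's argument is more self-contained, needing only the two lemmas already in hand and no auxiliary structure result. One point worth making explicit in your write-up: the claim that $Q_A$ is $\le k$-generated (and finitely presented) relies on the fact that a finitely generated submodule of a $k$-generated finitely presented torsion module over a valuation ring is again $\le k$-generated and finitely presented; this follows by induction on $k$ using that finitely generated ideals of $\O_K$ are principal, but it is not entirely trivial in the non-discrete case and deserves a line of justification beyond ``standard''.
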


\begin{proof}
 First of all, we have that $\Hdg^\int(H,\iota)(n)=\dim H/d=\PR(H,\iota)(n)$.
 Next, recall that we have decompositions~\eqref{ctg-dcp-unr}, respectively \eqref{ctg-dcp}:
 \[
  \omega_H=\bigoplus_{\upsilon\colon F^\nr\to K_0}\omega_{H,\upsilon},\qquad
  \omega_{H,K}\coloneqq\omega_H\otimes_{\O_K}K=\bigoplus_{\tau\colon F\to K}\omega_{H,K,\tau}
 \]
 and that for every $\upsilon\colon F^\nr\to K_0$ the latter equation restricts to
 \eqref{ctg-dcp-ups}:
 \[
  \omega_{H,\upsilon,K}\coloneqq\omega_{H,\upsilon}\otimes_{\O_K}K=
  \bigoplus_{\tau|\upsilon}\omega_{H,K,\tau},
 \]
 where $\tau|\upsilon$ stands for the embeddings of $F$ in $K$ which agree with $\upsilon$ on $F^\nr$.
 By taking the average over the $f(F|\Q_p)$ embeddings~$\upsilon$ of $F^\nr$ in $K_0$,
 it suffices to prove that for each of these $\upsilon$ and $0\le i\le n$ we have:
 \[
  \Hdg^\int(H,\iota)_\upsilon(i)\le\frac{1}{e(F|\Q_p)}\sum_{\tau|\upsilon}\min\Set{i,r_\tau},
 \]
 where $r_\tau=\dim_K\omega_{H,K,\tau}$.
 Fix then $\upsilon\colon F^\nr\to K_0$ and write:
 \[
  \bar{\omega}_{H,\upsilon}\coloneqq
  \omega_{H,\upsilon}/[\pi]\omega_{H,\upsilon}\cong\bigoplus_{i=1}^n\O_K/a_i\O_K
 \]
 with $a_1,\dots,a_n\in\O_K\setminus\Set{0}$, $v(a_1)\ge\dots\ge v(a_n)$, as in \eqref{ctg-pi-eld-red}.
 We need to show that for $0\le i\le n$ and
 any subset~$I\subseteq\Set{\tau\colon F\to K|\tau|\upsilon}$ we have:
 \[
  \sum_{j=1}^i v(a_j)\le\frac{1}{e(F|\Q_p)}\left(\sum_{\tau\notin I}r_\tau+i|I|\right),
 \]
 where $|I|$ denotes the cardinality of $I$.
 So we fix $i$ and $I$ as above and consider the $\O_K$-module:
 \[
  \omega_I\coloneqq\omega_{H,\upsilon}\cap\bigoplus_{\tau\in I}\omega_{H,K,\tau}
  \subseteq\omega_{H,\upsilon,K}.
 \]
 By Lemma~\ref{Hdg-PR-lem}, this is a direct summand of $\omega_{H,\upsilon}$,
 free of rank $r_I\coloneqq\sum_{\tau\in I}r_\tau$.
 Moreover, because $[\pi]\colon\omega_{H,\upsilon}\to\omega_{H,\upsilon}$ restricts to
 an injective endomorphism of $\omega_I$ and $\O_K$ is a valuation ring,
 we may write:
 \[
  \bar{\omega}_I\coloneqq\omega_I/[\pi]\omega_I\cong\bigoplus_{j=1}^{r_I}\O_K/b_j\O_K
 \]
 for some nonzero elements $b_1,\dots,b_{r_I}\in\O_K$ with $v(b_1)\ge\dots\ge v(b_{r_I})$.
 The second assertion of Lemma~\ref{Hdg-PR-lem},
 together with the fact that $[\pi]\omega_{H,\upsilon}\cap\omega_I=[\pi]\omega_I$,
 tells us that $\bar{\omega}_I$ is $\rho_I$-torsion, where $\rho_I=\prod_{\tau\in I}\tau(\pi)\in\O_K$.
 In other words:
 \begin{equation}\label{Hdgi-PR-eqn1}
  v(b_j)\le v(\rho_I)=\frac{|I|}{e(F|\Q_p)}
 \end{equation}
 for all $j=1,\dots,r_I$.
 Note that the inclusion $\omega_I\subseteq\omega_{H,\upsilon}$ induces
 an injective map~$\bar{\omega}_I\to\bar{\omega}_{H,\upsilon}$.
 This, by Lemma~\ref{Hdgi-PR-lem},
 implies that $v(b_j)\le v(a_j)$ for all $j=1,\dots,\min\Set{n,r_I}$ and,
 if $r_I>n$, then $v(b_j)=0$ for all $j=n+1,\dots,r_I$.
 In particular:
 \begin{equation}\label{Hdgi-PR-eqn2}
  \sum_{j=i+1}^{r_I}v(b_j)\le\sum_{j=i+1}^n v(a_j).
 \end{equation}
 Finally, because $\pi^{e(F|\Q_p)}$ equals $p$ times a unit in $\O_F$, we have:
 \[
  \sum_{j=1}^{r_I}v(b_j)=v(\det([\pi]|\omega_I))
  =\frac{1}{e(F|\Q_p)}\rk_{\O_K}\omega_I=\frac{r_I}{e(F|\Q_p)}.
 \]
 This allows us to rewrite:
 \begin{align*}
  \sum_{j=1}^i v(a_j) &=\Hdg^\int(H,\iota)_\upsilon(n)-\sum_{j=i+1}^n v(a_j) \\
  &=\frac{1}{e(F|\Q_p)}\rk_{\O_K}\omega_{H,\upsilon}-\sum_{j=i+1}^n v(a_j) \\
  &=\frac{1}{e(F|\Q_p)}\sum_{\tau\notin I}r_\tau+\sum_{j=1}^{r_I}v(b_j)-\sum_{j=i+1}^n v(a_j).
 \end{align*}
 But using first \eqref{Hdgi-PR-eqn2} and then \eqref{Hdgi-PR-eqn1}, we see that:
 \[
  \sum_{j=1}^{r_I}v(b_j)-\sum_{j=i+1}^n v(a_j)
  \le\sum_{j=1}^{r_I}v(b_j)-\sum_{j=i+1}^{r_I} v(b_j)
  =\sum_{j=1}^{\min\Set{i,r_I}}v(b_j)
  \le\frac{i|I|}{e(F|\Q_p)},
 \]
 thus concluding the proof of the theorem.
\end{proof}

Combining the above statement with Corollary~\ref{Hdgi-HN} we obtain the following inequality,
which generalises \cite[Proposition~3.14]{AM} beyond the case when the valuation~$v$ is discrete
(note that in loc.\ cit.\ the Hodge polygon of an object $(H,\iota)\in\pdiv_{\O_K,\O_F}$ refers to
what we call here the Pappas-Rapoport polygon $\PR(H,\iota)$,
as already observed in Remark~\ref{rmk-PR-nm}).

\begin{cor}
 Let $(H,\iota)$ be a $p$-divisible group over $\O_K$ with endomorphism structure for $\O_F$.
 Then:
 \[
  \HN(H[p],\iota)\le\PR(H,\iota).
 \]
\end{cor}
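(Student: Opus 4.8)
The plan is to obtain this inequality by chaining together the two comparison results proved above, exploiting the transitivity of the partial order on polygons. First, I would note that all three polygons involved are functions on the same interval $[0,n]$, where $n=\height H/d$: since $\height H[p]=\height H=nd$, the Harder-Narasimhan polygon $\HN(H[p],\iota)$ is defined on $[0,\height H[p]/d]=[0,n]$, precisely the common domain of $\Hdg^\int(H,\iota)$ and $\PR(H,\iota)$. This reconciliation of domains is what makes the two inequalities directly composable.

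With this in place, I would simply invoke Corollary~\ref{Hdgi-HN}, which gives $\HN(H[p],\iota)\le\Hdg^\int(H,\iota)$, together with Theorem~\ref{Hdgi-PR}, which gives $\Hdg^\int(H,\iota)\le\PR(H,\iota)$. Since the relation $\le$ on piecewise affine linear concave functions is defined by pointwise inequality together with equality of the right endpoints, it is manifestly transitive: from $f\le g$ and $g\le h$ one reads off $f(x)\le h(x)$ for all $x$ and $f(n)=g(n)=h(n)$, whence $f\le h$. Applying this to the chain $\HN(H[p],\iota)\le\Hdg^\int(H,\iota)\le\PR(H,\iota)$ yields the desired inequality, the common endpoint value being $\dim H/d$, in agreement with the endpoint computations carried out for each of the three polygons.

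There is essentially no obstacle here beyond bookkeeping: the entire mathematical content is already contained in Corollary~\ref{Hdgi-HN} and Theorem~\ref{Hdgi-PR}. The only point deserving a moment's care is the verification that the three polygons share the same domain $[0,n]$ and the same right endpoint $\dim H/d$, so that the transitive chain is legitimate in the poset of concave functions introduced in the setup; both facts follow immediately from the definitions and the identity $\height H[p]=\height H$.
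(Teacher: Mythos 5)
Your proof is correct and is exactly the paper's argument: the corollary is obtained by chaining Corollary~\ref{Hdgi-HN} with Theorem~\ref{Hdgi-PR} and using transitivity of the partial order on polygons. Your additional check that all three polygons live on $[0,n]$ with common endpoint $\dim H/d$ (via $\height H[p]=\height H$) is sound bookkeeping that the paper leaves implicit.
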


Consider now the limit case when the integral Hodge polygon is equal to the Pappas-Rapoport polygon.
It turns out that this situation is also detected by the Hodge polygon in special fibre.

\begin{prp}\label{max}
 Let $(H,\iota)$ be a $p$-divisible group over $\O_K$ with endomorphism structure for $\O_F$ and
 denote by $(H_\k,\iota)$ the reduction of $H$ to $\k$ together with the induced $\O_F$-action.
 The equality~$\Hdg^\int(H,\iota)=\PR(H,\iota)$ is equivalent to $\Hdg(H_\k,\iota)=\PR(H,\iota)$.
\end{prp}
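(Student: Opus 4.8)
The plan is to reduce the global equivalence to a statement attached to each embedding~$\upsilon\colon F^\nr\to K_0$, and there to recognise both equalities as the generalised Rapoport condition of Remark~\ref{rmk-gRc}. For each $\upsilon$ set the local Pappas--Rapoport function $p_\upsilon\coloneqq\frac{1}{e(F|\Q_p)}\sum_{\tau|\upsilon}\PR(H,\iota)_\tau$, so that $\PR(H,\iota)=\frac{1}{f(F|\Q_p)}\sum_\upsilon p_\upsilon$, and likewise $\Hdg^\int(H,\iota)=\frac{1}{f(F|\Q_p)}\sum_\upsilon\Hdg^\int(H,\iota)_\upsilon$ and the analogous expression for $\Hdg(H_\k,\iota)$. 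The proof of Theorem~\ref{Hdgi-PR} yields $\Hdg^\int(H,\iota)_\upsilon\le p_\upsilon$, while Remark~\ref{rmk-gRc} yields $\Hdg(H_\k,\iota)_\upsilon\le p_\upsilon$, both with the common endpoint $\frac{1}{e(F|\Q_p)}\rk_{\O_K}\omega_{H,\upsilon}=\frac{1}{e(F|\Q_p)}\sum_{\tau|\upsilon}r_\tau$. Since each difference $p_\upsilon-\Hdg^\int(H,\iota)_\upsilon$, respectively $p_\upsilon-\Hdg(H_\k,\iota)_\upsilon$, is pointwise nonnegative and vanishes at the endpoint, a sum of such differences vanishes exactly when every summand does. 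Both global equalities therefore decouple over $\upsilon$, and I am reduced to proving, for a fixed $\upsilon$, that $\Hdg^\int(H,\iota)_\upsilon=p_\upsilon$ holds if and only if $\Hdg(H_\k,\iota)_\upsilon=p_\upsilon$.

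Fixing $\upsilon$, I would work with the free $\O_K$-module $\omega_{H,\upsilon}$, the injective endomorphism $[\pi]$, the filtration~\eqref{PR-fil} by $\O_K$-direct-summands $\omega_{\upsilon,\bullet}$ whose graded pieces carry the $\O_F$-action through $\tau_{\upsilon,i}$, and the reduction $\omega_{H_\k,\upsilon}=\omega_{H,\upsilon}\otimes_{\O_K}\k$ with its nilpotent operator $\overline{[\pi]}$. On the special-fibre side, Remark~\ref{rmk-gRc} already identifies $\Hdg(H_\k,\iota)_\upsilon=p_\upsilon$ with the condition $\omega_{\upsilon,i,\k}=\omega_{H_\k,\upsilon}[\pi^i]$ for all $i$, i.e.\ with the reduced filtration coinciding with the kernel filtration of $\overline{[\pi]}$. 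On the integral side, unwinding the Fitting-ideal description of $\Hdg^\int(H,\iota)_\upsilon$ shows that $\Hdg^\int(H,\iota)_\upsilon=p_\upsilon$ is equivalent to $v(\Fitt_i\bar\omega_{H,\upsilon})=\frac{1}{e(F|\Q_p)}\sum_{\tau|\upsilon}\max\Set{0,r_\tau-i}$ for all $i$, that is, to the cokernel $\bar\omega_{H,\upsilon}=\omega_{H,\upsilon}/[\pi]\omega_{H,\upsilon}$ attaining the smallest Fitting ideals permitted by Theorem~\ref{Hdgi-PR}.

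The core of the argument is to match these two reformulations, which I would carry out by revisiting the proof of Theorem~\ref{Hdgi-PR} and extracting its equality case. For the optimal subset $I_i\coloneqq\Set{\tau|\upsilon|r_\tau\ge i}$, the bound $\Hdg^\int(H,\iota)_\upsilon(i)\le p_\upsilon(i)$ was produced by combining \eqref{Hdgi-PR-eqn1} and \eqref{Hdgi-PR-eqn2} for the summand $\omega_{I_i}$ of Lemma~\ref{Hdg-PR-lem}; equality for all $i$ forces both of these inequalities to be equalities, which amounts to saying that each $\bar\omega_{I_i}$ is $\rho_{I_i}$-torsion with maximal elementary divisors and that the induced map $\bar\omega_{I_i}\to\bar\omega_{H,\upsilon}$ is saturated in the relevant range. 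Reducing these summand-level statements modulo the maximal ideal of $\O_K$, I expect to recover exactly the equalities $\omega_{\upsilon,i,\k}=\omega_{H_\k,\upsilon}[\pi^i]$; conversely, under the Rapoport condition I would lift a Jordan basis of $\overline{[\pi]}$ adapted to the kernel filtration back to $\omega_{H,\upsilon}$, splitting off unit elementary divisors in $\omega_{H,\upsilon}/[\pi]\omega_{H,\upsilon}$ and thereby forcing the minimal Fitting ideals.

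The hard part will be precisely this matching step. The polygon $\Hdg^\int(H,\iota)_\upsilon$ is read off the Smith normal form of $[\pi]$ on the characteristic-$0$ lattice $\omega_{H,\upsilon}$, whereas $\Hdg(H_\k,\iota)_\upsilon$ is read off the Jordan type of the reduced operator $\overline{[\pi]}$ on $\omega_{H_\k,\upsilon}$; as the examples of \S\ref{S-ex} illustrate, these are a priori independent invariants and are compelled to agree only at the common maximal value $p_\upsilon$. Controlling which inequalities are saturated, and verifying that saturation is preserved both under reduction to $\k$ and under lifting back to $\O_K$ (using Lemma~\ref{Hdgi-PR-lem} to compare elementary divisors along $\bar\omega_{I_i}\hookrightarrow\bar\omega_{H,\upsilon}$), is where the genuine work lies; by contrast, the decoupling over~$\upsilon$ and the Fitting-ideal reformulations are essentially formal.
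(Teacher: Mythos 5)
Your opening reductions are correct and coincide with the paper's own first steps: the proof of Theorem~\ref{Hdgi-PR} does give $\Hdg^\int(H,\iota)_\upsilon\le\PR(H,\iota)_\upsilon$ embedding by embedding, Remark~\ref{rmk-gRc} gives $\Hdg(H_\k,\iota)_\upsilon\le\PR(H,\iota)_\upsilon$, the endpoints all agree, and therefore both global equalities decouple over $\upsilon$ exactly as you say; likewise, your identification of the special-fibre equality with the kernel-filtration condition $\omega_{\upsilon,i,\k}=\omega_{H_\k,\upsilon}[\pi^i]$ is precisely how the paper uses Remark~\ref{rmk-gRc}. Up to this point your proposal and the paper agree, and, as you note, this part is essentially formal.

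The problem is that everything after this point is a plan rather than a proof, and the part you defer (``I expect to recover\dots'', ``I would lift\dots'', ``this is where the genuine work lies'') is the entire mathematical content of the proposition: a bridge, in the extremal case, between the Smith-normal-form data of $[\pi]$ on the lattice $\omega_{H,\upsilon}$ and the Jordan-type data of $\overline{[\pi]}$ on $\omega_{H,\upsilon}\otimes_{\O_K}\k$. Your proposed bridge does not close this gap as sketched. In one direction, saturating \eqref{Hdgi-PR-eqn1} and \eqref{Hdgi-PR-eqn2} for the subsets $I_i$ only yields statements about valuations of elementary divisors of the modules $\omega_{I_i}/[\pi]\omega_{I_i}$ and $\omega_{H,\upsilon}/[\pi]\omega_{H,\upsilon}$; such data does not ``reduce modulo the maximal ideal'' to information about $\ker\overline{[\pi]}^{\,j}$ --- this is exactly the Smith/Jordan discrepancy you yourself flag, and no mechanism is offered for crossing it. (Beware also that $\bar\omega_{I_i}$ being $\rho_{I_i}$-torsion is unconditional, not an equality-case phenomenon.) In the other direction, ``lifting a Jordan basis and splitting off unit elementary divisors'' is a one-sentence compression of what is genuinely the hardest computation. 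The paper crosses the bridge by a different, concrete device: it orders the $r_\tau$ decreasingly, chooses an $\O_K$-basis adapted to the filtration~\eqref{PR-fil}, writes $[\pi]$ as a block upper triangular matrix $P$ with diagonal blocks $\tau_i(\pi)I_{r_i}$ and superdiagonal blocks $A_i$, and shows that \emph{each} of the two equalities is equivalent to every reduced block $\bar A_i$ having full rank $r_{i+1}$; the implication from this rank condition back to $\Hdg^\int(H,\iota)_\upsilon=\PR(H,\iota)_\upsilon$ is then established by an explicit change-of-basis induction on $e(F|\Q_p)$ that computes the elementary divisors of $P$. Some such adapted-basis (or equivalent) analysis is indispensable; until you supply it, or a genuine substitute, your proposal does not prove the proposition.
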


\begin{proof}
 Let us recover the same notation as in the previous proof and set:
 \[
  \PR(H,\iota)_\upsilon\coloneqq\frac{1}{e(F|\Q_p)}\sum_{\tau|\upsilon}\PR(H,\iota)_\tau
 \]
 for every embedding~$\upsilon$ of $F^\nr$ in $K_0$.
 Note that the argument for the previous theorem shows that in fact
 $\Hdg^\int(H,\iota)_\upsilon\le\PR(H,\iota)_\upsilon$ for every $\upsilon$.
 Together with Remark~\ref{rmk-gRc}, this allows us to reduce to proving that for every $\upsilon$,
 the equality~$\Hdg^\int(H,\iota)_\upsilon=\PR(H,\iota)_\upsilon$ is equivalent to
 $\Hdg(H_\k,\iota)_\upsilon=\PR(H,\iota)_\upsilon$.
 Fix then $\upsilon\colon F^\nr\to K_0$ and, for $\tau|\upsilon$,
 order the elements $r_\tau=\dim_K\omega_{H,K,\tau}$ as $r_1\ge r_2\ge\dots\ge r_e$,
 where $e\coloneqq e(F|\Q_p)$.
 This gives an ordering~$\tau_1,\dots,\tau_e$ of the set~$\Set{\tau\colon F\to K|\tau|\upsilon}$.
 Recall now from \S\ref{S-Hdg-PR} that
 from this we can define an $\O_F$-stable filtration~\eqref{PR-fil}:
 \begin{equation}\label{PR-fil-max}
  0=\omega_{\upsilon,0}\subseteq\omega_{\upsilon,1}\subseteq\dots\subseteq\omega_{\upsilon,e}
  =\omega_{H,\upsilon}
 \end{equation}
 by $\O_K$-direct-summands,
 with $\rk_{\O_K}\omega_{\upsilon,i}/\omega_{\upsilon,i-1}=r_i$ for $1\le i\le e$ and
 $\O_F$-action acting via $\tau_i$ on each graded piece~$\omega_{\upsilon,i}/\omega_{\upsilon,i-1}$.
 Choosing then an $\O_K$-basis of $\omega_{H,\upsilon}$ adapted to this filtration,
 we may write $[\pi]\colon\omega_{H,\upsilon}\to\omega_{H,\upsilon}$ as a matrix:
 \[
  P=
  \begin{pmatrix}
  \tau_1(\pi)I_{r_1} & A_1 & \dots & * \\
  0 & \tau_2(\pi)I_{r_2} & \ddots & \vdots  \\
  \vdots & \ddots & \ddots & A_{e-1}  \\
  0 & \dots & 0 & \tau_e(\pi)I_{r_e}
  \end{pmatrix},
 \]
 where, for $m\in\N$, $I_m$ denotes the identity matrix of rank $m$.
 Thus, for $1\le i\le e-1$, $A_i$ is a $r_i\times r_{i+1}$-matrix.
 We will prove that both conditions in the statement are equivalent to the fact that
 for all $i=1,\dots,e-1$ the reduction~$\bar{A}_i$ of the matrix~$A_i$ along
 $\O_K\to\k$ has rank $r_{i+1}$.

 Let us first prove the equivalence of the last assertion with
 $\Hdg(H_\k,\iota)_\upsilon=\PR(H,\iota)_\upsilon$.
 Consider the filtration~$0=\omega_{\upsilon,0,\k}\subseteq\omega_{\upsilon,1,\k}\subseteq\dots
 \subseteq\omega_{\upsilon,e,\k}=\omega_{H,\upsilon}\otimes_{\O_K}\k$ obtained as
 the reduction to $\k$ of \eqref{PR-fil-max} and note, recalling the discussion in \S\ref{S-Hdg-PR},
 that the action of $\pi$ induces maps~$\omega_{\upsilon,i+1,\k}/\omega_{\upsilon,i,\k}
 \to\omega_{\upsilon,i,\k}/\omega_{\upsilon,i-1,\k}$ for $1\le i\le e-1$.
 In fact, each of these maps can be represented respectively by the matrix~$\bar{A}_i$.
 Now, $\bar{A}_i$ having rank $r_{i+1}$ for all $i=1,\dots,e-1$ is equivalent to
 all these maps being injective.
 This, in turn, is realised if and only if $\omega_{\upsilon,i,\k}$ is equal to
 the $\iota(\pi)^i$-torsion submodule of $\omega_{H,\upsilon}\otimes_{\O_K}\k$ for all $i=1,\dots,e$.
 However, as observed in Remark~\ref{rmk-gRc},
 this last condition is equivalent to $\Hdg(H_\k,\iota)_\upsilon=\PR(H,\iota)_\upsilon$.

 Assume now that $\Hdg^\int(H,\iota)_\upsilon=\PR(H,\iota)_\upsilon$.
 Looking at the number of nonzero entries of both polygons,
 we find on one side $\rk_{\O_K}\omega_{H,\upsilon}$ minus the rank of
 the reduction~$\bar{P}$ of the matrix~$P$ to $\k$ and on the other side the number $r_1$.
 Since these must be equal and $\rk_{\O_K}\omega_{H,\upsilon}=\sum_{i=1}^er_i$,
 it follows that the rank of $\bar{P}$ is equal to $r_2+\dots+r_e$.
 Now, because the blocks along the diagonal of $P$ reduce to zero,
 this implies that for $1\le i\le e-1$ the rank of $\bar{A}_i$ is maximal, hence equal to $r_{i+1}$.
 Conversely, assume that the rank of $\bar{A}_i$ is equal to $r_{i+1}$ for all $1\le i\le e-1$.
 We will compute the polygon~$\Hdg^\int(H,\iota)_\upsilon$ by induction on $e$,
 where $e$ is seen simply as the number of diagonal blocks of $P$.
 The cases $e=1,2$ are easily done.
 Let then $e\ge3$ and assume that the result is true for $e-1$.
 Let $\varpi\in\O_K$ be an element of valuation $1/e$, so that $v(\varpi)=v(\tau_i(\pi))$ for any $i$.
 Then, applying the inductive hypothesis to the restriction of $[\pi]$ to $\omega_{\upsilon,e-1}$
 (i.e.\ considering the top left submatrix of $P$ containing the first $e-1$ diagonal blocks),
 one can make a change of bases such that the matrix~$P$ transforms to:
 \[
  \begin{pmatrix}
  I_{s_0} & 0 & \dots & 0 & B_0 \\
  0 & \varpi I_{s_1} & \ddots & \vdots & B_1 \\
  \vdots & \ddots & \ddots & 0 & \vdots \\
  \vdots & {} & \ddots & \varpi^{e-1}I_{s_{e-1}} & B_{e-1}  \\
  0 & \dots & \dots & 0 & \tau_e(\pi)I_{r_e}
  \end{pmatrix},
 \]
 where $s_0=r_2+\dots+r_{e-1},s_1=r_1-r_2,\dots,s_{e-2}=r_{e-2}-r_{e-1},s_{e-1}=r_{e-1}$.
 It is also possible to do this in such a way that the reduction of the matrix~$B_{e-1}$ along
 $\O_K\to\k$ has the same rank as $\bar{A}_{e-1}$, that is, $r_e$.
 Up to a further change of bases, one can then reduce to the case where:
 \[
  B_{e-1}=
  \begin{pmatrix}
  I_{r_e} \\
  0
  \end{pmatrix},
  \text{ the zero block having size }(r_{e-1}-r_e)\times r_e,
 \]
 and $B_i = 0$ for $0\le i\le e-2$.
 By an explicit computation,
 one finally finds that $\Hdg^\int(H,\iota)_\upsilon=\PR(H,\iota)_\upsilon$.
\end{proof}

\begin{rmk}
 Concerning a direct comparison between the integral Hodge polygon of
 $(H,\iota)\in\pdiv_{\O_K,\O_F}$ and the Hodge polygon of its reduction to $\k$,
 the above proposition is unluckily the best that one can obtain,
 unless $(H,\iota)$ is ``$\pi$-diagonalisable''
 (see \S\ref{S-pid}, in particular Remark~\ref{Hdg-pid}).
 In general, we will see in \S\ref{S-ex} that the two polygons are unrelated.
\end{rmk}

\subsection{Behaviour with respect to Cartier duality}

Consider the Cartier dual~$H^D$ of the $p$-divisible group $H$ over $\O_K$.
It carries an $\O_F$-action~$\iota^D\colon\O_F\to\End(H^D)$ induced by
$\iota$ and functoriality of Cartier duality.
Then, the integral Hodge polygon of $(H^D,\iota^D)\in\pdiv_{\O_K,\O_F}$ is related to
$\Hdg^\int(H,\iota)$ as in the following statement.

\begin{prp}
 Let $(H,\iota)$ be a $p$-divisible group over $\O_K$ with endomorphism structure for $\O_F$ and
 write $\height H=dn$ with $n\in\N$.
 Let $\upsilon$ be an embedding of $F^\nr$ in $K_0$.
 If $\Hdg^\int(H,\iota)_\upsilon=(\lambda_{\upsilon,1},\dots,\lambda_{\upsilon,n})\in\R^n_+$,
 then $\Hdg^\int(H^D,\iota^D)_\upsilon=(1-\lambda_{\upsilon,n},\dots,1-\lambda_{\upsilon,1})$.
 Thus, if $\Hdg^\int(H,\iota)=(\lambda_1,\dots,\lambda_n)\in\R^n_+$,
 then $\Hdg^\int(H^D,\iota^D)=(1-\lambda_n,\dots,1-\lambda_1)$.
\end{prp}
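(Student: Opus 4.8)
The plan is to prove the refined, $\upsilon$-wise assertion first and to deduce the global one by averaging. So I would begin by checking that the reflection operation is compatible with the two averages defining the polygons: if for every embedding $\upsilon$ of $F^\nr$ in $K_0$ one has $\Hdg^\int(H,\iota)_\upsilon=(\lambda_{\upsilon,1},\dots,\lambda_{\upsilon,n})$ and $\Hdg^\int(H^D,\iota^D)_\upsilon=(1-\lambda_{\upsilon,n},\dots,1-\lambda_{\upsilon,1})$, then the $i$-th slope of $\Hdg^\int(H^D,\iota^D)=\frac{1}{f(F|\Q_p)}\sum_\upsilon\Hdg^\int(H^D,\iota^D)_\upsilon$ equals $\frac{1}{f(F|\Q_p)}\sum_\upsilon(1-\lambda_{\upsilon,n+1-i})=1-\lambda_{n+1-i}$, where $\lambda_{n+1-i}$ is the $(n+1-i)$-th slope of $\Hdg^\int(H,\iota)$. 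Since each tuple $(\lambda_{\upsilon,j})_j$ is nonincreasing, the reflected tuples are again nonincreasing, so they and their average lie in $\R^n_+$; this reduces the statement to the per-$\upsilon$ reflection.

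Next I would realise $\omega_{H^D}$ as the orthogonal complement of $\omega_H$. Dualising the $\O_F$-equivariant sequence~\eqref{Hdg-fil} over the valuation ring $\O_K$—all terms being free, the sequence stays exact—gives $0\to\omega_{H^D}\to\E^\vee\to\omega_H^\vee\to0$, with $\E^\vee=\Hom_{\O_K}(\E,\O_K)$ again free of rank $n$ over $\O_F\otimes_{\Z_p}\O_K$ (each factor $\O_F\otimes_{\O_{F^\nr},\upsilon}\O_K$ being a complete intersection, hence Gorenstein, over $\O_K$). I would then verify, using the compatibility of the universal vector extension with Cartier duality (\cite{Me}, \cite{Bi}), that this dual sequence coincides with~\eqref{Hdg-fil} for $(H^D,\iota^D)$; in particular $\omega_{H^D}=\omega_H^\perp$ for the evaluation pairing $\langle-,-\rangle\colon\E\times\E^\vee\to\O_K$, and $\iota^D(\pi)$ acts on $\E^\vee$ as the adjoint of $\iota(\pi)$, i.e.\ $\langle\iota(\pi)x,\phi\rangle=\langle x,\iota^D(\pi)\phi\rangle$. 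Since the dual action of $\O_{F^\nr}$ preserves the embedding $\upsilon$, the pairing restricts to a perfect pairing $\E_\upsilon\times\E_\upsilon^\vee\to\O_K$ under which $\omega_{H^D,\upsilon}=\omega_{H,\upsilon}^\perp$ and $[\pi]$ is self-adjoint; note that $\omega_{H,\upsilon}^\perp$ is itself $\pi$-stable and an $\O_K$-direct summand, so that it does compute $\Hdg^\int(H^D,\iota^D)_\upsilon$.

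It then remains to prove the following reflection property, which is the heart of the matter and the main obstacle. Fix $\upsilon$, write $R\coloneqq\O_F\otimes_{\O_{F^\nr},\upsilon}\O_K$ (free of rank $e(F|\Q_p)$ over $\O_K$, with $R/\pi R\cong\O_K/p\O_K$) and let $\E_\upsilon$ be free of rank $n$ over $R$; let $\omega\subseteq\E_\upsilon$ be a $\pi$-stable $\O_K$-direct summand with $\pi$-elementary divisors $v(a_1)\ge\dots\ge v(a_n)$ as in~\eqref{ctg-pi-eld-red}, and let $v(b_1)\ge\dots\ge v(b_n)$ be those of $\omega^\perp\subseteq\E_\upsilon^\vee$. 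The claim is that $v(b_i)=1-v(a_{n+1-i})$ for all $i$. The endpoints already match: since $\omega^\perp\cong(\E_\upsilon/\omega)^\vee$ has $\O_K$-rank $e(F|\Q_p)n-\rk_{\O_K}\omega$ and $\pi^{e(F|\Q_p)}=p$ up to a unit, equation~\eqref{Hdgi-ups-end} applied to $H$ and to $H^D$ gives $\sum_i v(b_i)=n-\sum_i v(a_i)$. For the intermediate break points it suffices to establish, for $0\le i\le n$, the identity $\sum_{j=1}^i v(b_j)=i-\sum_{j=1}^n v(a_j)+\sum_{j=1}^{n-i}v(a_j)$, which is equivalent to the asserted reflection.

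I would prove this last identity by putting $[\pi]$ on $\E_\upsilon$ into the block-triangular normal form adapted to the saturation filtration of $\omega$, exactly as the matrix $P$ in the proof of Proposition~\ref{max}: the diagonal blocks are the scalars $\tau(\pi)$ for $\tau\mid\upsilon$ (with $v(\tau(\pi))=1/e(F|\Q_p)$ and $\sum_{\tau\mid\upsilon}v(\tau(\pi))=1$), while $\omega^\perp$ is cut out by the transposed blocks, so that reading off the Fitting ideals of $\omega^\perp/[\pi]\omega^\perp$ from this form yields the reflection. An alternative is an induction on $e(F|\Q_p)$, peeling off one embedding $\tau$ at a time by means of Lemma~\ref{Hdg-PR-lem} and tracking its effect simultaneously on $\omega$ and on $\omega^\perp$; already the case $e(F|\Q_p)=2$ exhibits the mechanism, where $\pi$ acts as $\tau(\pi)$ on $\omega$ and as $\tau'(\pi)$ on the quotient with $v(\tau(\pi))+v(\tau'(\pi))=1$, accounting for the shift by $1$. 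The delicate point—hence the place where self-adjointness of $[\pi]$ and perfectness of the pairing are indispensable—is to show that passing to the orthogonal complement genuinely \emph{reverses} the order of the elementary divisors, rather than merely reflecting their multiset. Granting the per-$\upsilon$ reflection, the averaging identity of the first paragraph yields $\Hdg^\int(H^D,\iota^D)=(1-\lambda_n,\dots,1-\lambda_1)$ and completes the proof.
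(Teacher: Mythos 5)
Your first two paragraphs are essentially fine: the reduction to a single $\upsilon$ by averaging, the realisation of $\omega_{H^D,\upsilon}$ inside $\E_\upsilon^\vee$ as $\omega_{H,\upsilon}^\perp$, and the matching of end points via \eqref{Hdgi-ups-end} are all correct (although the identification of the dual of \eqref{Hdg-fil} with the sequence \eqref{Hdg-fil} for $(H^D,\iota^D)$ is itself only promised, not verified). But the proof stops exactly where the proposition begins. The per-$\upsilon$ reflection $v(b_i)=1-v(a_{n+1-i})$, which you rightly call the heart of the matter, is never established: you prove only the equality of total sums $\sum_i v(b_i)=n-\sum_i v(a_i)$, and for the intermediate partial sums you offer two strategies, neither carried out nor clearly workable. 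The normal form in the proof of Proposition~\ref{max} triangularises $[\pi]$ restricted to $\omega_{H,\upsilon}$ with respect to the Pappas-Rapoport filtration; it provides no basis simultaneously adapted to $\omega$, to $\omega^\perp$ and to the pairing, so there is nothing from which to ``read off'' the Fitting ideals of $\omega^\perp/[\pi]\omega^\perp$. The induction on $e(F|\Q_p)$ is likewise only named, not set up. Moreover, the point you single out as delicate --- that passing to the orthogonal complement \emph{reverses} the order rather than merely reflecting the multiset --- is not where the difficulty lies: both tuples are sorted decreasingly by definition, so the multiset identity $\Set{v(b_i)}=\Set{1-v(a_j)}$ already forces $v(b_i)=1-v(a_{n+1-i})$. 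It is that multiset identity which is the entire content, and it is missing.

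For comparison, the paper closes this in a few lines by reducing modulo $\pi$ instead of analysing $[\pi]$ on $\E_\upsilon$. Since $\omega_{H,\upsilon}\cap(\pi\otimes1)\E_\upsilon=[\pi]\omega_{H,\upsilon}$ (the quotient $\E_\upsilon/\omega_{H,\upsilon}\cong\omega_{H^D,\upsilon}^\vee$ is $\O_K$-torsion-free, so $[\pi]$ acts injectively on it), the sequence \eqref{Hdg-fil} induces a short exact sequence
\[
 0\longrightarrow\omega_{H,\upsilon}/[\pi]\omega_{H,\upsilon}\longrightarrow(\O_K/p\O_K)^n
 \longrightarrow\omega_{H^D,\upsilon}^\vee/[\pi]^\vee\omega_{H^D,\upsilon}^\vee\longrightarrow0,
\]
the middle term being free of rank $n$ over $\O_K/p\O_K$ because $(\O_F\otimes_{\O_{F^\nr},\upsilon}\O_K)/(\pi\otimes1)\cong\O_K/p\O_K$. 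Taking $N\subseteq\O_K^n$ to be the preimage of the first submodule, one gets a chain $p\O_K^n\subseteq N\subseteq\O_K^n$ with $N/p\O_K^n\cong\omega_{H,\upsilon}/[\pi]\omega_{H,\upsilon}$ and $\O_K^n/N\cong\omega_{H^D,\upsilon}^\vee/[\pi]^\vee\omega_{H^D,\upsilon}^\vee$; a basis of $\O_K^n$ adapted to the free rank-$n$ submodule $N$, say $N=\bigoplus_i c_i\O_K e_i$, exhibits these two modules as $\bigoplus_i\O_K/(p/c_i)\O_K$ and $\bigoplus_i\O_K/c_i\O_K$ respectively, which is precisely the multiset reflection. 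Note also that the paper never needs the compatibility of the universal vector extension with Cartier duality that you invoke: it works throughout with $\omega_{H^D,\upsilon}^\vee\cong\E_\upsilon/\omega_{H,\upsilon}$, which \eqref{Hdg-fil} provides directly, and uses that a matrix and its transpose have the same elementary divisors to pass between $[\pi]^\vee$ on $\omega_{H^D,\upsilon}^\vee$ and $[\pi]$ on $\omega_{H^D,\upsilon}$. Replacing your fourth paragraph by this mod-$\pi$ sandwich argument would complete your write-up; as it stands, the proposal has a genuine gap at its central step.
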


\begin{proof}
 With notation as in \eqref{ctg-dcp-unr},
 note that the exact sequence~\eqref{Hdg-fil} restricts to
 an $\O_F$-equivariant exact sequence of $\O_K$-modules:
 \[
  0\longrightarrow\omega_{H,\upsilon}\longrightarrow\mathcal{E}_\upsilon
  \longrightarrow\omega_{H^D,\upsilon}^\vee\longrightarrow 0,
 \]
 where $\mathcal{E}_\upsilon$ is a free $O_F\otimes_{O_{F^\nr},\upsilon}O_K$-module of rank $n$ and
 $\omega_{H^D,\upsilon}^\vee=\Hom_{\O_K}(\omega_{H^D,\upsilon},\O_K)$ carries
 the $\O_F$-action induced naturally from that on $\omega_{H^D,\upsilon}$.
 Since $\omega_{H,\upsilon}\cap(\pi\otimes1)\mathcal{E}_\upsilon=[\pi]\omega_{H,\upsilon}$,
 one deduces an exact sequence:
 \[
  0\longrightarrow\omega_{H,\upsilon}/[\pi]\omega_{H,\upsilon}\longrightarrow(\O_K/p\O_K)^n
  \longrightarrow\omega_{H^D,\upsilon}^\vee/[\pi]^\vee\omega_{H^D,\upsilon}^\vee\longrightarrow 0,
 \]
 where $[\pi]^\vee$ denotes the endomorphism of
 $\omega_{H^D,\upsilon}^\vee$ given by the action of $\pi$.
 Observe that if $[\pi]\colon\omega_{H^D,\upsilon}\to\omega_{H^D,\upsilon}$ is represented by
 a certain matrix with respect to two $\O_K$-bases of $\omega_{H^D,\upsilon}$,
 then $[\pi]^\vee$ may be represented by the transpose of this matrix.
 In particular, if we have a presentation of
 $\omega_{H^D,\upsilon}/[\pi]\omega_{H^D,\upsilon}$ as in \eqref{ctg-pi-eld-red},
 which is what defines $\Hdg^\int(H^D,\iota^D)_\upsilon$,
 then we may find the same presentation
 for $\omega_{H^D,\upsilon}^\vee/[\pi]^\vee\omega_{H^D,\upsilon}^\vee$.
 The result can now be deduced by the following consideration.
 Let $N\subseteq\O_K^n$ denote the preimage of $\omega_{H,\upsilon}/[\pi]\omega_{H,\upsilon}$ via
 the standard surjection $\O_K^n\to(\O_K/p\O_K)^n$.
 Then, we have a chain of inclusions~$p\O_K^n\subseteq N\subseteq\O_K^n$ with
 $N/p\O_K^n\cong\omega_{H,\upsilon}/[\pi]\omega_{H,\upsilon}$ and
 $\O_K^n/N\cong\omega_{H^D,\upsilon}^\vee/[\pi]^\vee\omega_{H^D,\upsilon}^\vee$.
\end{proof}

\subsection{Behaviour with respect to subobjects}

Given $(H,\iota)\in\pdiv_{\O_K,\O_F}$ and an $\iota$-stable sub-$p$-divisible group $H'\subseteq H$,
we would like to understand what we can say about the integral Hodge polygon of $H'$
(equipped with the induced $\O_F$-action) in comparison to $\Hdg^\int(H,\iota)$.
For this, we need a dual version of Lemma~\ref{Hdgi-PR-lem}.

\begin{lem}\label{sub-lem}
 Let $M\to M'$ be a surjective homomorphism of finitely presented torsion $\O_K$-modules and
 suppose we are given presentations as in \eqref{pres}.
 Then, the same conclusions as in Lemma~\ref{Hdgi-PR-lem} hold.
\end{lem}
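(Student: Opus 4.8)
The plan is to reduce the statement to Lemma~\ref{Hdgi-PR-lem} by passing to Pontryagin-type duals. Concretely, I would work with the contravariant functor $(-)^\vee\coloneqq\Hom_{\O_K}(-,K/\O_K)$ on $\O_K$-modules. Since contravariant $\Hom$ is always left exact, applying $(-)^\vee$ to the given surjection $M\to M'$ produces an \emph{injective} homomorphism $(M')^\vee\to M^\vee$, which is exactly the type of hypothesis required by Lemma~\ref{Hdgi-PR-lem}. The whole point is thus to verify that $(-)^\vee$ preserves the presentations, and in particular the invariant factors, of finitely presented torsion $\O_K$-modules.

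The first step is the cyclic computation. For a nonzero $a\in\O_K$, a homomorphism $\O_K/a\O_K\to K/\O_K$ is determined by the image of $1$, which must be an $a$-torsion element of $K/\O_K$, that is, an element of $(a^{-1}\O_K)/\O_K$; multiplication by $a^{-1}$ identifies $\O_K/a\O_K$ with this submodule, whence $\Hom_{\O_K}(\O_K/a\O_K,K/\O_K)\cong\O_K/a\O_K$. As the direct sums in \eqref{pres} are finite, applying $(-)^\vee$ term by term yields $M^\vee\cong\bigoplus_{j=1}^m\O_K/a_j\O_K$ and $(M')^\vee\cong\bigoplus_{j=1}^{m'}\O_K/b_j\O_K$, with the same elements $a_j,b_j$ up to units and hence the same valuations. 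The second step is then to feed the injection $(M')^\vee\to M^\vee$ into Lemma~\ref{Hdgi-PR-lem}, with $(M')^\vee$ in the role of the submodule (invariant factors $b_j$) and $M^\vee$ in that of the ambient module (invariant factors $a_j$). Its conclusion reads $v(b_j)\le v(a_j)$ for $1\le j\le\min\Set{m,m'}$, together with $v(b_j)=0$ for $m<j\le m'$ when $m'>m$, which is precisely the assertion.

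The step I expect to need the most care is recognising that the termwise inequalities genuinely require this duality and cannot be extracted by a softer argument. A surjection $M\to M'$ does give the Fitting-ideal containments $\Fitt_i M\subseteq\Fitt_i M'$, but reading these off the presentations only produces inequalities between the tail partial sums $\sum_{j>i}v(a_j)$ and $\sum_{j>i}v(b_j)$, which are strictly weaker than the desired $v(b_j)\le v(a_j)$ for every individual $j$ (such partial-sum bounds do not even force the exponent of $M'$ to be bounded by that of $M$, although no actual surjection can violate this). The true termwise control is exactly what Lemma~\ref{Hdgi-PR-lem} supplies, so the only real work is checking that $(-)^\vee$ transports the hypotheses of the present lemma to those of the earlier one; the injectivity of $(M')^\vee\to M^\vee$ is formal. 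A direct argument dualising the proof of Lemma~\ref{Hdgi-PR-lem}---picking generators of $M'$, lifting them through $M$, and extracting a linear relation---would also work, but the duality reduction is shorter and avoids repeating that computation.
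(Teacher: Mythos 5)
Your proof is correct and is essentially the paper's own argument: the paper likewise dualises via the functor $N\mapsto\Hom_{\O_K}(N,K/\O_K)$, notes the cyclic computation $\Hom_{\O_K}(\O_K/a\O_K,K/\O_K)\cong\O_K/a\O_K$ so that presentations (and hence invariant factors) are preserved, and then applies Lemma~\ref{Hdgi-PR-lem} to the resulting injection $(M')^\vee\to M^\vee$. Your additional remark that Fitting-ideal containments alone would only give partial-sum inequalities, not the termwise ones, is a sound observation but not needed for the proof.
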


\begin{proof}
 For any finitely presented torsion $\O_K$-module~$N$, we define a new $\O_K$-module:
 \[
  N^*\coloneqq\Hom_{\O_K}(N,K/\O_K).
 \]
 Note that if $N=\O_K/a\O_K$ with $a\in\O_K$ nonzero
 (i.e.\ if $N$ is cyclic), then $N^*\cong\O_K/a\O_K$,
 the element $1+a\O_K\in\O_K/a\O_K$ corresponding to the map defined by $1+a\O_K\mapsto a^{-1}+\O_K$.
 In general, if $N$ has a certain presentation as direct sum of cyclic modules,
 then we can present $N^*$ in the same way.

 Now, our surjective map~$M\to M'$ gives rise to an injection~$M'^*\to M^*$.
 Since, as observed above,
 $M'^*$ and $M^*$ can be presented in the same way as $M'$ and $M$ respectively,
 we are then reduced to the statement of Lemma~\ref{Hdgi-PR-lem}.
\end{proof}

Suppose now that we have an $\iota$-stable sub-$p$-divisible group $H'$ of $H$.
Write $\height H'=dn'$ with $n'\in\N$ and let $\iota'$ denote the restriction of $\iota$ to $H'$.
The inclusion~$H'\subseteq H$ induces
an $\O_F$-equivariant surjective map~$\omega_H\to\omega_{H'}$.
According to the decomposition~\eqref{ctg-dcp-unr}, this map splits as a direct sum of
surjections~$\omega_{H,\upsilon}\to\omega_{H',\upsilon}$,
with $\upsilon$ ranging through the embeddings of $F^\nr$ in $K_0$.
Quotienting by the action of $\pi$,
we obtain a surjective map~$\bar{\omega}_{H,\upsilon}\to\bar{\omega}_{H',\upsilon}$ for
each $\upsilon$, with notation as in \eqref{ctg-pi-eld-red}.
Applying now the previous lemma, we deduce that for every $i=1,\dots,n'$ the slope of
$\Hdg^\int(H',\iota')_\upsilon$ on the interval~$[i-1,i]$ is less than or equal to the slope of
$\Hdg^\int(H,\iota)_\upsilon$ on the same interval.
Taking the average over $\upsilon$, we find a similar statement for
the full integral Hodge polygons~$\Hdg^\int(H',\iota')$ and $\Hdg^\int(H,\iota)$.
In particular,
the end point~$(n',\dim H'/d)$ of $\Hdg^\int(H',\iota')$ lies on or below $\Hdg^\int(H,\iota)$.
In fact, if this point lies on $\Hdg^\int(H,\iota)$,
then $\Hdg^\int(H',\iota')$ is forced to be equal to
the restriction of $\Hdg^\int(H,\iota)$ to $[0,n']$.
Note that this is for instance the case if $H'$ is the subobject corresponding to a break point of
$\HN(H,\iota)$ lying on $\Hdg^\int(H,\iota)$, as in Remark~\ref{Hdgi-HN-rmk}.

Let us sum up the above conclusions in the following proposition.

\begin{prp}
 Let $(H,\iota)$ be a $p$-divisible group over $\O_K$ with endomorphism structure for $\O_F$,
 let $H'\subseteq H$ be an $\iota$-stable sub-$p$-divisible group,
 write $\height H'=dn'$ with $n'\in\N$ and let $\iota'$ denote the restriction of $\iota$ to $H'$.
 Then, for every $i=1,\dots,n'$ the slope of $\Hdg^\int(H',\iota')$ on
 $[i-1,i]$ is at most the slope of $\Hdg^\int(H,\iota)$ on the same interval.
 Moreover, if $(n',\dim H'/d)$ lies on $\Hdg^\int(H,\iota)$,
 then $\Hdg^\int(H',\iota')$ is equal to the restriction of $\Hdg^\int(H,\iota)$ to $[0,n']$.
\end{prp}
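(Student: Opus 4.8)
The plan is to reduce everything, embedding by embedding $\upsilon\colon F^\nr\to K_0$, to the elementary-divisor comparison provided by Lemma~\ref{sub-lem}. First I would record the basic functoriality: the closed immersion $H'\hookrightarrow H$ induces an $\O_F$-equivariant surjection $\omega_H\to\omega_{H'}$, which by $\O_{F^\nr}$-equivariance and the decomposition~\eqref{ctg-dcp-unr} splits into a direct sum of surjections $\omega_{H,\upsilon}\to\omega_{H',\upsilon}$. Since $[\pi]$ commutes with this map (being part of the $\O_F$-action), reducing modulo the image of $[\pi]$ yields surjections of finitely presented $p$-torsion $\O_K$-modules
\[
 \bar{\omega}_{H,\upsilon}\longrightarrow\bar{\omega}_{H',\upsilon}
\]
for every $\upsilon$, with notation as in \eqref{ctg-pi-eld-red}.

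Next I would present $\bar{\omega}_{H,\upsilon}$ by elementary divisors $a_{\upsilon,1},\dots,a_{\upsilon,n}$ (with $v(a_{\upsilon,1})\ge\dots\ge v(a_{\upsilon,n})$) and $\bar{\omega}_{H',\upsilon}$ by $b_{\upsilon,1},\dots,b_{\upsilon,n'}$, noting that $n'\le n$ because $H'\subseteq H$. Applying Lemma~\ref{sub-lem} to the surjection above gives $v(b_{\upsilon,j})\le v(a_{\upsilon,j})$ for $1\le j\le n'$. By definition the slope of $\Hdg^\int(H',\iota')_\upsilon$ on $[j-1,j]$ is $v(b_{\upsilon,j})$ and that of $\Hdg^\int(H,\iota)_\upsilon$ is $v(a_{\upsilon,j})$, so the inequality holds slopewise for each $\upsilon$. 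Averaging over the $f(F|\Q_p)$ embeddings $\upsilon$ then yields the first assertion for the full polygons.

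For the ``moreover'' part I would run a rigidity argument. Write $s_i$ and $t_i$ for the slopes of $\Hdg^\int(H',\iota')$ and $\Hdg^\int(H,\iota)$ on $[i-1,i]$, $1\le i\le n'$; the first part gives $s_i\le t_i$. The hypothesis that $(n',\dim H'/d)$ lies on $\Hdg^\int(H,\iota)$ means $\sum_{i=1}^{n'}t_i=\dim H'/d=\sum_{i=1}^{n'}s_i$, the last equality being the endpoint value $\Hdg^\int(H',\iota')(n')$. Combined with $s_i\le t_i$, this forces $s_i=t_i$ for all $i$, so $\Hdg^\int(H',\iota')$ coincides with the restriction of $\Hdg^\int(H,\iota)$ to $[0,n']$.

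The genuine content is packaged in Lemma~\ref{sub-lem}, so the main points requiring care are upstream: verifying that the cotangent map $\omega_H\to\omega_{H'}$ is surjective and $\O_{F^\nr}$-equivariant, so that it respects the $\upsilon$-decomposition, and that reduction modulo $[\pi]$ preserves surjectivity. I expect no real obstacle beyond these bookkeeping checks; the comparison itself is immediate once Lemma~\ref{sub-lem} is in hand, and the equality case is the elementary observation that a slopewise inequality between concave polygons with matching values at $n'$ must be an equality on each subinterval.
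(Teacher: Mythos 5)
Your proof is correct and takes essentially the same route as the paper: the $\O_F$-equivariant surjection $\omega_H\to\omega_{H'}$ split along the decomposition~\eqref{ctg-dcp-unr}, reduction modulo $[\pi]$, application of Lemma~\ref{sub-lem} embedding by embedding, and averaging over $\upsilon$. The endpoint rigidity argument you give for the ``moreover'' part is also exactly the one the paper uses (slopewise inequality plus equal sums forces equality of slopes).
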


\begin{rmk}
 Consider the situation of the proposition just stated and let $H''$ be the quotient of $H$ by $H'$;
 it has an induced $\O_F$-action, which we denote by $\iota''$.
 Then, arguing by duality,
 we see that for every $i=1,\dots,n-n'$ the slope of $\Hdg^\int(H'',\iota'')$ on
 $[i-1,i]$ is at least the slope of $\Hdg^\int(H,\iota)$ on $[n'+i-1,n'+i]$.
 Moreover, if $(n',\dim H'/d)$ lies on $\Hdg^\int(H,\iota)$,
 then $\Hdg^\int(H'',\iota'')$ is equal to the restriction of $\Hdg^\int(H,\iota)$ to $[n',n]$,
 up to a shift of coordinates setting the origin in $(n',\dim H'/d)$.
\end{rmk}

\subsection{The \texorpdfstring{$\pi$}{π}-diagonalisable case}\label{S-pid}

In this section we are going to study a special case of $p$-divisible groups over
$\O_K$ with endomorphism structure for $\O_F$, defined as follows.

\begin{dfn}
 The object~$(H,\iota)\in\pdiv_{\O_K,\O_F}$ is called \emph{$\pi$-diagonalisable} if
 the endomorphism~$[\pi]$ of the free $\O_K$-module~$\omega_H$ is diagonalisable.
\end{dfn}

\begin{rmk}\label{rmk-pid}
 Recall that $K$ is assumed to contain a Galois closure of $F$ over $\Q_p$.
 \begin{enumerate}
  \item If $[\pi]\colon\omega_H\to\omega_H$ is diagonalisable,
  then the eigenspace decomposition of $\omega_H$ must recover \eqref{ctg-dcp} after inverting $p$,
  hence each eigenspace of $\omega_H$ is contained in a corresponding
  $\omega_{H,K,\tau}\subseteq\omega_H\otimes_{\O_K}K$.
  In particular, we deduce that $[\pi]\colon\omega_H\to\omega_H$ is diagonalisable if and
  only if the decomposition~\eqref{ctg-dcp} restricts to:
  \begin{equation}\label{ctg-dcp-pid}
   \omega_H=\bigoplus_\tau(\omega_H\cap\omega_{H,K,\tau})
  \end{equation}
  (in general we only have a containment relationship).
  This also means that for $(H,\iota)\in\pdiv_{\O_K,\O_F}$ the notion of
  being $\pi$-diagonalisable does not actually depend on the choice of the uniformiser~$\pi\in\O_F$.

  \item If $F|\Q_p$ is unramified,
  then every object $(H,\iota)\in\pdiv_{\O_K,\O_F}$ is $\pi$-diagonalisable.
  In general, using \eqref{ctg-dcp-unr},
  the $\pi$-diagonalisability reduces to a condition on the restrictions of $[\pi]$ to
  $\omega_{H,\upsilon}$ for each $\upsilon\colon F^\nr\to K_0$.
  Observe now that if $[\pi]\colon\omega_H\to\omega_H$ is diagonalisable,
  then its eigenvalues belong to $\Set{\tau(\pi)\in\O_K|\tau\colon F\to K}$,
  so they are all elements of valuation $1/e(F|\Q_p)$.
  Thus, if $(H,\iota)$ is $\pi$-diagonalisable,
  then for every $\upsilon\colon F^\nr\to K_0$ we have:
  \[
   \Hdg^\int(H,\iota)_\upsilon=\left(\frac{1}{e(F|\Q_p)},\dots,\frac{1}{e(F|\Q_p)},0,\dots,0\right),
  \]
  the number of nonzero entries being equal to $\rk_{\O_K}\omega_{H,\upsilon}$;
  in particular, $\rk_{\O_K}\omega_{H,\upsilon}\le n=\height H/d$ for each $\upsilon$.
  In this sense, the polygons $\Hdg^\int(H,\iota)_\upsilon$ give a measure of the obstruction to
  the diagonalisability of the endomorphism $[\pi]\colon\omega_H\to\omega_H$.

  \item A converse to the statement above is false in general.
  Let for instance $F=\Q_p(\pi)$ with $\pi^2=p=2$.
  Consider then a $p$-divisible group $H$ over $\O_K$ of height~$4$ and dimension~$2$,
  endowed with an $\O_F$-action~$\iota\colon\O_F\to\End(H)$ such that
  the endomorphism~$[\pi]$ of $\omega_H$ induced by $\iota(\pi)$ is given,
  with respect to some $\O_K$-basis~$(e_1,e_2)$, by a matrix:
  \[
  \begin{pmatrix}
   \sqrt{2} & c \\
   0 & -\sqrt{2}
  \end{pmatrix}
  \]
  with $c\in\O_K$ satisfying $1/2\le v(c)<3/2$.
  Now, because $[\pi](e_1)=\sqrt{2}e_1$ and $[\pi](e_2)=-\sqrt{2}(e_2-c/\sqrt{2}e_1)$,
  we have that $\Hdg^\int(H,\iota)=(1/2,1/2)$.
  However, if we denote by $\tau_+$ and $\tau_-$ the two embeddings of $F$ in $K$,
  defined by $\tau_+(\pi)=\sqrt{2}$ and $\tau_-(\pi)=-\sqrt{2}$,
  then we see that $\omega_H\cap\omega_{H,K,\tau_+}=\O_K\cdot e_1$ and
  $\omega_H\cap\omega_{H,K,\tau_-}=\O_K\cdot(e_1-2\sqrt{2}/ce_2)$,
  which together do not span the whole $\omega_H$.
  Note that in this example $F|\Q_p$ is a wildly ramified extension.
  It turns out that if we rule out this possibility,
  then the properties deduced in part (2) of this remark characterise
  the $\pi$-diagonalisability completely.
 \end{enumerate}
\end{rmk}

\begin{prp}\label{pid}
 Assume that $F|\Q_p$ is tamely ramified and
 let $(H,\iota)$ be a $p$-divisible group over $\O_K$ with endomorphism structure for $\O_F$.
 Write $\height H=dn$ with $n\in\N$ and
 consider the decomposition~$\omega_H=\bigoplus_\upsilon\omega_{H,\upsilon}$ as in ~\eqref{ctg-dcp-unr}.
 Then, $(H,\iota)$ is $\pi$-diagonalisable if and only if for every $\upsilon$ we have that
 $\rk_{\O_K}\omega_{H,\upsilon}\le n$ and:
 \begin{equation}\label{Hdgi-pid}
  \Hdg^\int(H,\iota)_\upsilon=\left(\frac{1}{e(F|\Q_p)},\dots,\frac{1}{e(F|\Q_p)},0,\dots,0\right),
 \end{equation}
 the number of nonzero entries being equal to $\rk_{\O_K}\omega_{H,\upsilon}$.
\end{prp}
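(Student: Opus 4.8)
The plan is to prove the statement one $\upsilon$ at a time. By Remark~\ref{rmk-pid}(1), $(H,\iota)$ is $\pi$-diagonalisable if and only if for every embedding $\upsilon\colon F^\nr\to K_0$ the restriction of $[\pi]$ to $\omega_{H,\upsilon}$ is diagonalisable, equivalently $\omega_{H,\upsilon}=\bigoplus_{\tau|\upsilon}(\omega_{H,\upsilon}\cap\omega_{H,K,\tau})$ as in \eqref{ctg-dcp-pid}. The forward implication is already contained in Remark~\ref{rmk-pid}(2), so I would only have to establish the converse: assuming the shape \eqref{Hdgi-pid} of $\Hdg^\int(H,\iota)_\upsilon$ together with $\rk_{\O_K}\omega_{H,\upsilon}\le n$ for every $\upsilon$, I must deduce that each $[\pi]|_{\omega_{H,\upsilon}}$ is diagonalisable.

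Since $\pi$-diagonalisability (Remark~\ref{rmk-pid}(1)) and the integral Hodge polygon are both insensitive to the choice of uniformiser $\pi\in\O_F$, I would first use the tameness of $F|\Q_p$ to replace $\pi$ by an Eisenstein uniformiser with $\pi^{e(F|\Q_p)}\in\O_{F^\nr}$. With this choice, for a fixed $\upsilon$ the $e(F|\Q_p)$ embeddings $\tau|\upsilon$ send $\pi$ to elements $\tau_{\upsilon,j}(\pi)=\zeta^{j}\varpi_\upsilon$, where $\varpi_\upsilon\in\O_K$ is a fixed root of $X^{e(F|\Q_p)}-\upsilon(\pi^{e(F|\Q_p)})$ and $\zeta\in\O_K$ is a primitive $e(F|\Q_p)$-th root of unity. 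All of these have valuation $1/e(F|\Q_p)$, and the decisive point is that $p\nmid e(F|\Q_p)$ forces every difference $\zeta^{j}-\zeta^{j'}$ with $j\neq j'$ to be a \emph{unit} of $\O_K$. I would then reinterpret the hypothesis: the equality \eqref{Hdgi-pid} says precisely that all elementary divisors of the injective map $[\pi]\colon\omega_{H,\upsilon}\to\omega_{H,\upsilon}$ have valuation $1/e(F|\Q_p)$, which, using that $\O_K$ is an elementary divisor domain exactly as in \eqref{ctg-pi-eld}, is equivalent to the identity of lattices $[\pi]\omega_{H,\upsilon}=\varpi_\upsilon\omega_{H,\upsilon}$.

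The crux is then to introduce $\Phi_\upsilon\coloneqq\varpi_\upsilon^{-1}[\pi]|_{\omega_{H,\upsilon}}$. The lattice identity just obtained guarantees that $\Phi_\upsilon$ carries $\omega_{H,\upsilon}$ onto itself, hence is an $\O_K$-linear automorphism, and it satisfies $\Phi_\upsilon^{e(F|\Q_p)}=\id$ because $[\pi]$ is annihilated by $\prod_{\tau|\upsilon}(X-\tau_{\upsilon,j}(\pi))$. I would then factor $X^{e(F|\Q_p)}-1=\prod_{j}(X-\zeta^{j})$ into pairwise comaximal factors of $\O_K[X]$ — comaximality being exactly the unit condition on $\zeta^{j}-\zeta^{j'}$ secured by tameness — and apply the Chinese remainder theorem to the $\O_K[X]/(X^{e(F|\Q_p)}-1)$-module structure on $\omega_{H,\upsilon}$ induced by $\Phi_\upsilon$. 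This produces a direct sum decomposition $\omega_{H,\upsilon}=\bigoplus_{j}\Ker(\Phi_\upsilon-\zeta^{j})=\bigoplus_{j}\Ker([\pi]-\tau_{\upsilon,j}(\pi))$ into $[\pi]$-eigenspaces, each summand being $\omega_{H,\upsilon}\cap\omega_{H,K,\tau_{\upsilon,j}}$ after inverting $p$. Summing over $\upsilon$ recovers \eqref{ctg-dcp-pid} and hence $\pi$-diagonalisability.

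The main obstacle is conceptual rather than computational. Modulo the maximal ideal of $\O_K$ all the eigenvalues $\tau_{\upsilon,j}(\pi)$ collapse to $0$, so $[\pi]$ reduces to a nilpotent operator and the naive ``distinct eigenvalues'' criterion for diagonalisability cannot be invoked directly. The role of the hypothesis \eqref{Hdgi-pid} is precisely to permit the rescaling by $\varpi_\upsilon$ that turns $[\pi]$ into an automorphism whose eigenvalues are the $e(F|\Q_p)$-th roots of unity; tameness then ensures that these remain pairwise distinct modulo the maximal ideal, restoring the comaximality over $\O_K$ needed for the splitting. Recognising that \eqref{Hdgi-pid} is exactly the condition making this normalisation possible — and that it fails without tameness, as illustrated by the wildly ramified example in Remark~\ref{rmk-pid}(3) — is the heart of the argument.
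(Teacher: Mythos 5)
Your proof is correct, and it shares the paper's central idea---using the hypothesis~\eqref{Hdgi-pid} to divide $[\pi]|_{\omega_{H,\upsilon}}$ by a scalar of valuation $1/e(F|\Q_p)$, so as to obtain an invertible operator, with tameness guaranteeing that the relevant roots of unity remain distinct---but your way of concluding is genuinely different. The paper writes $[\pi]=\tau_0(\pi)\psi$ with $\psi$ an automorphism, then passes to the special fibre: the reductions $\tilde{\omega}_{H,\tau_i}$ of the candidate summands $\omega_H\cap\omega_{H,K,\tau_i}$ (direct summands of rank $\dim_K\omega_{H,K,\tau_i}$ by Lemma~\ref{Hdg-PR-lem}) are eigenspaces of $\bar{\psi}$ for the pairwise distinct scalars $\bar{\zeta^i}\in\k$, so their sum is direct, and a dimension count combined with Nakayama's lemma shows they generate $\omega_{H,\upsilon}$. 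You instead stay integral: from the lattice identity $[\pi]\omega_{H,\upsilon}=\varpi_\upsilon\omega_{H,\upsilon}$ (a correct reformulation of~\eqref{Hdgi-pid}) you obtain an automorphism $\Phi_\upsilon=\varpi_\upsilon^{-1}[\pi]$ satisfying $\Phi_\upsilon^{e(F|\Q_p)}=\id$, and the Chinese remainder theorem for $\O_K[X]/(X^{e(F|\Q_p)}-1)$---whose factors $(X-\zeta^j)$ are pairwise comaximal precisely because $e(F|\Q_p)$ is prime to $p$---splits $\omega_{H,\upsilon}$ directly into the integral eigenspaces $\Ker\bigl([\pi]-\tau_{\upsilon,j}(\pi)\bigr)=\omega_H\cap\omega_{H,K,\tau_{\upsilon,j}}$. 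This Maschke-type argument buys a cleaner ending: no reduction to $\k$, no Nakayama, no appeal to Lemma~\ref{Hdg-PR-lem} or to the ranks of the summands (that these ranks equal $\dim_K\omega_{H,K,\tau}$ falls out for free), at the modest cost of invoking the semisimplicity of the $\O_K$-algebra generated by $\Phi_\upsilon$; it also sidesteps the slightly delicate point in the paper's formulation that pairwise trivial intersections alone would not suffice for more than two subspaces (the paper's argument is fine because distinct eigenvalues give a direct sum, but your idempotent decomposition makes directness automatic). Both proofs use tameness at the same spot and break down without it, consistent with the wildly ramified counterexample in Remark~\ref{rmk-pid}.
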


\begin{proof}
 One direction of the double implication is the content of part (2) of the remark above.
 For the other direction, assume that for every embedding~$\upsilon\colon F^\nr\to K_0$
 we have that $r_\upsilon\coloneqq\rk_{\O_K}\omega_{H,\upsilon}\le n$ and
 $\Hdg^\int(H,\iota)_\upsilon$ satisfies the formula in the statement,
 with as many $1/e(F|\Q_p)$ entries as $r_\upsilon$.
 By the first part of Remark~\ref{rmk-pid}, it suffices to prove that \eqref{ctg-dcp} restricts to
 $\omega_H=\bigoplus_\tau(\omega_H\cap\omega_{H,K,\tau})$ or, equivalently,
 that for each $\upsilon$ we have:
 \[
  \omega_{H,\upsilon}=\bigoplus_{\tau|\upsilon}(\omega_H\cap\omega_{H,K,\tau}),
 \]
 where $\tau|\upsilon$ denotes the embeddings~$\tau\colon F\to K$ which agree with
 $\upsilon$ on $F^\nr$.
 Fix then $\upsilon\colon F^\nr\to K_0$ and note that for every $\tau|\upsilon$,
 by Lemma~\ref{Hdg-PR-lem} the $\O_K$-module~$\omega_H\cap\omega_{H,K,\tau}$ is a direct summand of
 $\omega_{H,\upsilon}$, free of rank $\dim_K\omega_{H,K,\tau}$.
 We set:
 \[
  \tilde{\omega}_{H,\tau}\coloneqq(\omega_H\cap\omega_{H,K,\tau})\otimes_{\O_K}\k
   \subseteq\omega_{H,\upsilon}\otimes_{\O_K}\k.
 \]
 By Nakayama's lemma, it is enough to show that $\omega_{H,\upsilon}\otimes_{\O_K}\k$ is equal to
 the sum over $\tau|\upsilon$ of the sub-$\k$-vector-spaces~$\tilde{\omega}_{H,\tau}$.
 However, since:
 \[
  \sum_{\tau|\upsilon}\dim_\k\tilde{\omega}_{H,\tau}=\sum_{\tau|\upsilon}\dim_K\omega_{H,K,\tau}=
  r_\upsilon=\dim_\k(\omega_{H,\upsilon}\otimes_{\O_K}\k),
 \]
 the claim is equivalent to the fact that, for varying $\tau|\upsilon$,
 the $\tilde{\omega}_{H,\tau}$'s have pairwise intersection $0$.

 In order to prove this, recall that because $F|\Q_p$ is tamely ramified,
 we may choose the uniformiser~$\pi$ of $\O_F$ in such a way that $\pi^{e(F|\Q_p)}\in F^\nr$.
 Then, the elements~$\tau(\pi)$ for $\tau|\upsilon$ are exactly
 the different $e(F|\Q_p)$-th roots of $\upsilon(\pi^{e(F|\Q_p)})$ in $K$.
 Thus, fixing one embedding~$\tau_0\colon F\to K$ with $\tau_0|\upsilon$,
 we may write $\Set{\tau\colon F\to K|\tau|\upsilon}=\{\tau_0,\dots,\tau_{e(F|\Q_p)-1}\}$ with
 $\tau_i(\pi)=\zeta^i\tau_0(\pi)$, where $\zeta\in\O_K$ is a primitive $e(F|\Q_p)$-th root of $1$.
 Now, by assumption the endomorphism~$[\pi]\colon\omega_{H,\upsilon}\to\omega_{H,\upsilon}$
 may be represented, with respect to suitable $\O_K$-bases, by a diagonal matrix with
 nonzero entries $a_1,\dots,a_{r_\upsilon}\in\O_K$ all of valuation $1/e(F|\Q_p)$.
 By rescaling the elements of one of the two bases by the appropriate units of $\O_K$,
 we may even represent $[\pi]$ by a scalar matrix of coefficient any element $a\in\O_K$ with
 $v(a)=1/e(F|\Q_p)$; let us take $a=\tau_0(\pi)$.
 This allows us to write $[\pi]=\tau_0(\pi)\psi$ with
 $\psi$ an invertible endomorphism of $\omega_{H,\upsilon}$.
 Let $\bar{\psi}$ denote the reduction of $\psi$ to $\omega_{H,\upsilon}\otimes_{\O_K}\k$.
 Note at this point that for every $i=0,\dots,e(F|\Q_p)-1$,
 the restriction of $[\pi]$ to $\omega_H\cap\omega_{H,K,\tau_i}\subseteq\omega_{H,\upsilon}$
 coincides with the multiplication by $\tau_i(\pi)$,
 hence the restriction of $\psi$ to the same submodule
 coincides with the multiplication by $\tau_i(\pi)/\tau_0(\pi)=\zeta^i$.
 This, in turn, implies that the restriction of $\bar{\psi}$ to $\tilde{\omega}_{H,\tau}$
 equals the multiplication by the image~$\bar{\zeta^i}$ of $\zeta^i$ in $\k$.
 However, since $e(F|\Q_p)$ is coprime to $p$,
 the elements~$\bar{\zeta^i}\in\k$ are distinct for $i=0,\dots,e(F|\Q_p)-1$, ensuring that
 the sub-vector-spaces~$\tilde{\omega}_{H,\tau_0},\dots,\tilde{\omega}_{H,\tau_{e(F|\Q_p)-1}}$ have pairwise intersection $0$.
 This concludes the proof of the proposition.
\end{proof}

\begin{rmk}\label{Hdg-pid}
 Write $\omega_H=\bigoplus_\upsilon\omega_{H,\upsilon}$ as in \eqref{ctg-dcp-unr}.
 Let then $\upsilon$ be an embedding of $F^\nr$ in $K_0$ and
 suppose that $\rk_{\O_K}\omega_{H,\upsilon}\le n$.
 We claim that if $\Hdg^\int(H,\iota)_\upsilon$ satisfies the formula~\eqref{Hdgi-pid},
 then the same holds for $\Hdg(H_\k,\iota)_\upsilon$.
 In particular, if $(H,\iota)\in\pdiv_{\O_K,\O_F}$ is $\pi$-diagonalisable,
 then taking into consideration the second part of Remark~\ref{rmk-pid},
 we have that for every $\upsilon\colon F^\nr\to K_0$:
 \[
  \Hdg(H_\k,\iota)_\upsilon=\left(\frac{1}{e(F|\Q_p)},\dots,\frac{1}{e(F|\Q_p)},0,\dots,0\right)=
   \Hdg^\int(H,\iota)_\upsilon
 \]
 and ultimately $\Hdg(H_\k,\iota)=\Hdg^\int(H,\iota)$.

 To prove the claim, recall first the $\O_F$-equivariant identification~\eqref{Die-ctg-ups}:
 \[
  \D(H_\k)_\upsilon/\phi_{H_\k}\D(H_\k)_{\sigma^{-1}\upsilon}\cong\omega_{H,\upsilon}\otimes_{\O_K}\k,
 \]
 where the left-hand side is part of the decomposition~\eqref{ctg-dcp-unr-spf}.
 Observe next, as in the previous proof, that by assumption
 (and up to base changing to the ring of integers of a suitable finite field extension of $K$)
 the restriction of
 $[\pi]\colon\omega_H\to\omega_H$ to $\omega_{H,\upsilon}$ can be expressed as
 the multiplication by an element of $O_K$ of valuation $1/e(F|\Q_p)$,
 composed with an invertible endomorphism.
 Now, the reduction of this map to $\omega_{H,\upsilon}\otimes_{\O_K}\k$ is zero.
 Thus, given the above identification, we have that
 $\D(H_\k)_\upsilon/\phi_{H_\k}\D(H_\k)_{\sigma^{-1}\upsilon}$ is $\pi\otimes1$-torsion as
 a module over $W_{\O_F,\upsilon}(\k)=\O_F\otimes_{\O_{F^\nr},\upsilon}W(\k)$,
 and this proves the claim.

 The converse to the statement which we just showed is false in general.
 Consider again the case $F=\Q_p(\pi)$ with $\pi^2=p$ (for any $p$)
 and assume that $K$ contains an element~$c$ with $0<v(c)<1/2$.
 Let then $(H,\iota)\in\pdiv_{\O_K,\O_F}$ with $\height H=4$,
 $\dim H=2$ and such that $[\pi]\colon\omega_H\to\omega_H$ is given,
 with respect to some $\O_K$-basis~$(e_1,e_2)$, by a matrix:
 \[
 \begin{pmatrix}
  \sqrt{p} & c \\
  0 & -\sqrt{p}
 \end{pmatrix}.
 \]
 Then, because the reduction of this matrix to $\k$ is zero,
 we have that $\Hdg(H_\k,\iota)=(1/2,1/2)$.
 However, over $\O_K$ we see that
 $[\pi](e_2)=c(e_1-\sqrt{p}/ce_2)$ and $[\pi](e_1-\sqrt{p}/ce_2)=p/ce_2$,
 hence $\Hdg^\int(H,\iota)=(1-v(c),v(c))\ne(1/2,1/2)$.
\end{rmk}

\paragraph{The case of $\O_F$-modules.}
 As examples of $\pi$-diagonalisable $p$-divisible groups over
 $\O_K$ with endomorphism structure for $\O_F$,
 we should mention a particularly relevant class of objects defined as follows.
 Fix an embedding $\tau_0\colon F\to K$.
 The object $(H,\iota)\in\pdiv_{\O_K,\O_F}$ is called a $p$-divisible \emph{$\O_F$-module} if
 the $\O_F$-action induced by $\iota$ on $\omega_H$ is via $\tau_0\colon\O_F\to\O_K$.
 In this case, the decomposition~\eqref{ctg-dcp} reduces to $\omega_{H,K}=\omega_{H,K,\tau_0}$,
 so that the equality in \eqref{ctg-dcp-pid} is clearly satisfied.
 In fact, the previous formulas simplify considerably for $p$-divisible $\O_F$-modules.
 First of all, we have that $\dim H\le n=\height H/d$ by Remark~\ref{ctg-dcp-dim}.
 Furthermore, the polygons~$\Hdg^\int(H,\iota)_\upsilon$ are zero for
 all embeddings~$\upsilon\colon F^\nr\to K_0$ except for
 the one given by the restriction of $\tau_0$.
 Thus:
 \[
  \Hdg^\int(H,\iota)=
   \frac{1}{f(F|\Q_p)}\left(\frac{1}{e(F|\Q_p)},\dots,\frac{1}{e(F|\Q_p},0,\dots,0\right)=
   \left(\frac{1}{d},\dots,\frac{1}{d},0,\dots,0\right),
 \]
 the number of nonzero entries being equal to $\dim H$.
 By Remark~\ref{Hdg-pid},
 a similar situation holds for the polygons $\Hdg(H_\k,\iota)_\upsilon$ and $\Hdg(H_\k,\iota)$.
 Finally, note that even the Pappas-Rapoport polygon of $(H,\iota)$ has only slopes $1/d$ and $0$,
 so that in the end:
 \[
  \Hdg^\int(H,\iota)=\Hdg(H_\k,\iota)=\PR(H,\iota)=
   \left(\frac{1}{d},\dots,\frac{1}{d},0,\dots,0\right).
 \]
 In particular, $p$-divisible $\O_F$-modules are also an example of
 the special case considered in Proposition~\ref{max}.

\section{The integral Hodge polygon in families}

\subsection{Continuity of the integral Hodge polygon}\label{S-cont}

In this section,
we let $E$ be a complete valued field extension of $\Q_p$ and denote by $\O_E$ its valuation ring.
We assume that $E$ contains $F^\nr$, although we do not fix an embedding.
Let $\varpi\in\O_E$ be a pseudouniformiser.
For $r,s\in\N$,
we denote by $\O_E\{S_1,\dots,S_r\}$ the $\varpi$-adic completion of $\O_E[S_1,\dots,S_r]$ and,
if $E$ is discretely valued,
we denote by $\O_E\{S_1,\dots,S_r\}[[T_1,\dots,T_s]]$ the $(\varpi,T_1,\dots,T_s)$-adic completion of
$\O_E[S_1,\dots,S_r,T_1,\dots,T_s]$.
Recall that these rings have a description in terms of (converging) power series,
in particular they do not depend on the choice of $\varpi$.

Let now $\X$ be a formal scheme over $\Spf{\O_E}$ and consider the following two situations.
\begin{enumerate}[label=(\Alph*)]
 \item $\X$ is \emph{locally finitely presented} over $\O_E$,
  i.e.\ it is a locally finite union of open affine formal subschemes, say $\Spf A_i$,
  with each $A_i$ a topologically finitely presented $\O_E$-algebra, that is,
  an $\O_E$-algebra of the form $\O_E\{S_1,\dots,S_r\}/I$ for some $r\in\N$ and
  $I\subseteq\O_E\{S_1,\dots,S_r\}$ a finitely generated ideal.
 \item $E$ is discretely valued and $\X$ is a \emph{special} formal scheme over $\Spf\O_E$,
  i.e.\ it is a locally finite union of open affine formal subschemes, say $\Spf A_i$,
  with each $A_i$ a special $\O_E$-algebra, that is,
  an $\O_E$-algebra of the form $\O_E\{S_1,\dots,S_r\}[[T_1,\dots,T_s]]/I$ for some $r,s\in\N$ and
  $I\subseteq\O_E\{S_1,\dots,S_r\}[[T_1,\dots,T_s]]$ a finitely generated ideal.
  This notion can also be found in the literature under the name of
  \emph{locally formally of finite type} (without the locally finiteness assumption).
\end{enumerate}
We let $\X^\an$ denote the generic fibre of $\X$ as a Berkovich $E$-analytic space
(see \cite[\S 1]{Be2} for the construction in case (A) and \cite[\S 1]{Be3} in case (B)).
Let us remark that the topological space underlying $\X^\an$ is Hausdorff and paracompact.

Observe that in both situations considered above, the formal scheme $\X$ is adic.
If $\I\subseteq\O_\X$ is an ideal of definition of finite type,
where $\O_\X$ is the structure sheaf of $\X$,
we let $\X_i$ denote the scheme with the same underlying topological space as $\X$ and
structure sheaf $\O_\X/\I^{i+1}$, for $i\ge0$.
Then, $\X$ is equal to the inductive limit of the schemes~$\X_i$,
with respect to the obvious transition maps.
By a $p$-divisible group over $\X$ we shall mean a family~$\H=(\H_i)_{i\ge0}$ where
each $\H_i$ is a $p$-divisible group over $\X_i$,
together with compatible isomorphisms~$\H_j\times_{\X_j}\X_i\cong\H_i$ for $j\ge i$.
We say that $\H$ has constant height~$h\in\N$ if each $\H_i$ has constant height $h$ over $\X_i$
(equivalently, if $\H_0$ has constant height~$h$ over $\X_0$).
A homomorphism of $p$-divisible groups over $\X$ is given by a compatible system of
homomorphisms of $p$-divisible groups over $\X_i$, for $i\ge0$.
We remark that these notions are independent of the choice of the ideal of definition $\I$.

\begin{dfn}
 A \emph{$p$-divisible group} over $\X$ \emph{with endomorphism structure} for
 $\O_F$ is a pair~$(\H,\iota)$ consisting of a $p$-divisible group~$\H$ over $\X$ and
 a map of $\Z_p$-algebras $\iota\colon\O_F\to\End(\H)$.
 Note that if $\I\subseteq\O_\X$ is an ideal of definition of finite type and
 $\H=(\H_i)_{i\ge0}$ with notation as above, then $\iota$ amounts to a compatible system of
 maps of $\Z_p$-algebras $\iota_i\colon\O_F\to\End(\H_i)$, for $i\ge0$.
\end{dfn}

Let then $(\H,\iota)$ be a $p$-divisible group over $\X$ with endomorphism structure for $\O_F$ and
suppose that $\H$ has constant height, say $h\in\N$.
For each point $x\in\X^\an$, we let $\Hr(x)$ denote the completed residue field of $x$;
it is a complete valued field extension of $E$.
Then, $x$ induces a map of formal schemes $\Spf\O_x\to\X$ over $\Spf\O_E$,
where $\O_x$ is the valuation ring of $\Hr(x)$.
By base change of $(\H,\iota)$,
we obtain a $p$-divisible group over $\Spf\O_x$ with endomorphism structure for $\O_F$.
Recall that this amounts to the same thing as a $p$-divisible group over $\Spec\O_x$ with
endomorphism structure for $\O_F$ (see \cite[Lemma~II.4.16]{Me});
we denote this object by $(\H_x,\iota)\in\pdiv_{\O_x,\O_F}$.
Note that in particular we have that $h=\height\H_x=dn$, for some $n\in\N$ uniform on $\X$.
Consider now the integral Hodge polygon~$\Hdg^\int(\H_x,\iota)\in\R^n_+$ of $(\H_x,\iota)$.
This defines a function:
\begin{align*}
 \Hdg^\int(\H,\iota)\colon\X^\an &\longrightarrow\R^n_+ \\
 x &\longmapsto\Hdg^\int(\H_x,\iota).
\end{align*}
Let us endow $\R^n_+$ with the topology induced by the usual topology of $\R$.
Then, we have the following result.

\begin{thm}\label{cont}
 Let $\X$ be a formal scheme over $\Spf\O_E$ falling in situation (A) or (B) as above and
 denote by $\X^\an$ its generic fibre as a Berkovich $E$-analytic space.
 Let $(\H,\iota)$ be a $p$-divisible group over $\X$ with endomorphism structure for $\O_F$,
 suppose that $\H$ has constant height $h$ and write $h=dn$ with $n\in\N$.
 Then, the function~$\Hdg^\int(\H,\iota)\colon\X^\an\longrightarrow\R^n_+$ is continuous.
 Moreover, for every fixed element $f_0\in\R^n_+$, the subset:
 \[
  \X^\an_{\Hdg^\int\le f_0}\coloneqq\Set{x\in\X^\an|\Hdg^\int(\H_x,\iota)\le f_0}\subseteq\X^\an
 \]
 defines a closed analytic domain of $\X^\an$.
\end{thm}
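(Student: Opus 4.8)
The plan is to reduce the whole statement to the analytic behaviour of the Fitting ideals of the cokernel of $[\pi]$. Both continuity and being a closed analytic domain are local on $\X^\an$, so first I would restrict to the generic fibre of an open affine $\Spf A\subseteq\X$ as in situation~(A) or~(B). Since $E$ contains $F^\nr$, the idempotents of $\O_{F^\nr}\otimes_{\Z_p}\O_E$ split $\omega_\H$ over $\X$ into a direct sum $\bigoplus_\upsilon\omega_{\H,\upsilon}$ refining \eqref{ctg-dcp-unr} fibrewise, and $[\pi]$ preserves each summand. Because $\omega_\H$ is locally free and its formation commutes with base change, for every $x\in\X^\an$ with valuation ring $\O_x\subseteq\Hr(x)$ one has $\bar\omega_{\H_x,\upsilon}\cong\bar\omega_{\H,\upsilon}\otimes_{\O_\X}\O_x$, and the Fitting ideals base-change accordingly. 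Locally, $\Fitt_k\bar\omega_{\H,\upsilon}$ is generated by finitely many minors of a matrix representing $[\pi]$ on $\omega_{\H,\upsilon}$, whose entries restrict to analytic functions on $\X^\an$.

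For continuity I would use the description $\Hdg^\int(\H_x,\iota)_\upsilon(i)=v_x(\Fitt_0\bar\omega_{\H_x,\upsilon})-v_x(\Fitt_i\bar\omega_{\H_x,\upsilon})$ from the definition of the integral Hodge polygon. The first term is locally constant, equal to $e(F|\Q_p)^{-1}\rk\omega_{\H_x,\upsilon}$ by \eqref{Hdgi-ups-end}, while $v_x(\Fitt_i\bar\omega_{\H_x,\upsilon})$ is the minimum of $v_x(g)$ over the finitely many minors $g$ generating $\Fitt_i$. Since each $x\mapsto|g(x)|$ is continuous on the Berkovich space (\cite{Be2,Be3}), so is $x\mapsto v_x(g)$ with values in $[0,+\infty]$, and a finite minimum of such is continuous; it is moreover everywhere finite, being bounded above by $v_x(\Fitt_0)$. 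Thus each partial sum $x\mapsto\Hdg^\int(\H_x,\iota)(i)$, $0\le i\le n$, is continuous after averaging over the $f(F|\Q_p)$ embeddings $\upsilon$; as a polygon in $\R^n_+$ is determined by its partial sums, the map $\Hdg^\int(\H,\iota)\colon\X^\an\to\R^n_+$ is continuous.

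For the closedness statement, recall that $\Hdg^\int(\H_x,\iota)\le f_0$ means $\Hdg^\int(\H_x,\iota)(i)\le f_0(i)$ for $1\le i\le n$ together with equality at $i=n$. Since $\Hdg^\int(\H_x,\iota)(n)=\dim\H_x/d$ and $\omega_\H$ is locally free, this endpoint equality cuts out an open and closed subset of $\X^\an$ (a union of connected components), on which it remains to impose $\Hdg^\int(\H_x,\iota)(i)\le f_0(i)$ for $1\le i\le n-1$. The key manoeuvre is that $\O_x$ is a valuation ring, so $v_x$ is multiplicative on finitely generated ideals and $\sum_\upsilon v_x(\Fitt_i\bar\omega_{\H_x,\upsilon})=v_x\bigl(\prod_\upsilon\Fitt_i\bar\omega_{\H_x,\upsilon}\bigr)$. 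The product $J_i\coloneqq\prod_\upsilon\Fitt_i\bar\omega_{\H,\upsilon}$ is a coherent ideal generated by the finitely many products $h=\prod_\upsilon g_\upsilon$ of minors, and each inequality $\Hdg^\int(\H_x,\iota)(i)\le f_0(i)$ rewrites as $v_x(J_i)\ge c_i$ for an explicit constant $c_i$, i.e.\ as $v_x(h)\ge c_i$, equivalently $|h(x)|\le|p|^{c_i}$, for every generator $h$ of $J_i$. Each such locus is a closed analytic domain, and the desired set is the intersection of these finitely many domains with the open and closed endpoint locus, hence itself a closed analytic domain.

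The step I expect to be the main obstacle is the rigorous passage between the formal/coherent data over $\X$ and analytic functions on $\X^\an$: one must verify that forming $\omega_{\H,\upsilon}$, applying $[\pi]$, taking the cokernel and its Fitting ideals all commute with base change to $\O_x$ and descend to coherent ideal sheaves on $\X^\an$ whose pointwise valuations compute the polygon. Once this is secured, the multiplicativity identity $\sum_\upsilon v_x(\Fitt_i)=v_x(\prod_\upsilon\Fitt_i)$ is precisely what turns the \emph{averaged} polygon inequality into finitely many honest analytic inequalities, instead of a sum of minima with no evident analytic meaning.
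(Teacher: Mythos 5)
Your proposal is correct and takes essentially the same route as the paper's proof: decompose $\omega_\H=\bigoplus_\upsilon\omega_{\H,\upsilon}$, express the polygon through valuations of the Fitting ideals of $\omega_{\H,\upsilon}/[\pi]\omega_{\H,\upsilon}$, reduce to an affine piece via the locally finite cover of $\X^\an$ by closed analytic domains, and use that the pullback valuation of the product ideal $\prod_\upsilon\Fitt_j\bar{\omega}_{\H,\upsilon}$ is the minimum of $v_x$ over finitely many regular functions on $\X^\an$. In particular, your ``key manoeuvre'' (converting the averaged inequality into finitely many analytic inequalities via the product of the Fitting ideals over $\upsilon$) is exactly the paper's device, and your treatment of the endpoint condition via local constancy of $x\mapsto\dim\H_x$ matches the paper's reduction to the case where $\omega_\H$ has constant rank.
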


\begin{proof}
 Let $\I\subseteq\O_\X$ be an ideal of definition of finite type and
 write $\X=\varinjlim_{i\ge0}\X_i$ and $\H=(\H_i)_{i\ge0}$ as above.
 For $i\ge0$, let $\omega_{\H_i}$ denote the cotangent sheaf of $\H_i$ along the identity section.
 Then, $\omega_\H\coloneqq\varprojlim_{i\ge0}\omega_{\H_i}$ is a finite and locally free $\O_\X$-module
 (see \cite[3.3.1]{BBM} and recall that $\X$ is $p$-adically complete),
 with the property that for each point~$x\in\X^\an$,
 the pullback of $\omega_\H$ along $\Spf\O_x\to\X$ identifies with $\omega_{\H_x}$.
 Note that $\omega_\H$ is endowed with an action of $\O_F$ induced by $\iota$.
 Thus, because $E$ contains $F^\nr$,
 we have a decomposition~$\omega_\H=\bigoplus_\upsilon\omega_{\H,\upsilon}$ into
 finite locally free $\O_\X$-submodules,
 where $\upsilon$ ranges through the embeddings of $F^\nr$ in $E$ and
 $\O_{F^\nr}$ acts on $\omega_{\H,\upsilon}$ via $\upsilon\colon\O_{F^\nr}\to\O_E\to\O_\X$.
 For each $\upsilon$, let $[\pi]\colon\omega_{\H,\upsilon}\to\omega_{\H,\upsilon}$ denote
 the endomorphism induced by $\iota(\pi)$ and
 set $\bar{\omega}_{\H,\upsilon}\coloneqq\omega_{\H,\upsilon}/[\pi]\omega_{\H,\upsilon}$,
 a coherent $\O_\X$-module.
 For $j\in\N$, then, we have coherent sheaves of ideals $\Fitt_j\bar{\omega}_{\H,\upsilon}$ which
 locally recover the Fitting ideals of $\bar{\omega}_{\H,\upsilon}$ (see \cite[2.8.13]{Ab}).
 We obtain a description of the function~$\Hdg^\int(\H,\iota)$ as follows.
 Let $x\in\X^\an$, write $v_x$ for the valuation on $\Hr(x)$ and,
 for $j\ge0$ and $\upsilon\colon F^\nr\to E$,
 denote by $(\Fitt_j\bar{\omega}_{\H,\upsilon})_x$ the pullback of
 $\Fitt_j\bar{\omega}_{\H,\upsilon}$ along $\Spf\O_x\to\X$.
 Then:
 \begin{align*}
  \Hdg^\int(\H,\iota)(x)(j)
   &=\frac{1}{f(F|\Q_p)}\sum_\upsilon\Hdg^\int(\H_x,\iota)_\upsilon(j) \\
   &=\frac{1}{f(F|\Q_p)}\sum_\upsilon
    (v_x((\Fitt_0\bar{\omega}_{\H,\upsilon})_x)-v_x((\Fitt_j\bar{\omega}_{\H,\upsilon})_x)) \\
   &=\Hdg^\int(\H_x,\iota)(n)-
    \frac{1}{f(F|\Q_p)}\sum_\upsilon v_x((\Fitt_j\bar{\omega}_{\H,\upsilon})_x) \\
   &=\frac{\dim\H_x}{d}-\frac{1}{f(F|\Q_p)}v_x\left(\prod_\upsilon(\Fitt_j\bar{\omega}_{\H,\upsilon})_x\right),
 \quad j\in\Set{0,\dots,n}.
 \end{align*}
 Recall now, from the construction of $\X^\an$,
 that if $\X=\bigcup_k\X_k$ is a locally finite covering of $\X$ by open affine formal subschemes,
 then the generic fibres $\X_k^\an$ provide a locally finite covering of
 $\X^\an$ by closed analytic domains,
 so that both claimed statements may be checked on each $\X_k^\an$ independently.
 Thus, it suffices to treat the case that $\X$ is affine, say $\X=\Spf A$,
 with $A$ an $\O_E$-algebra which is topologically of finite type (in situation (A)),
 respectively special (in situation (B)).
 Furthermore, we may assume without loss of generality that $\omega_\H$ has constant rank over $\X$,
 hence that $x\mapsto\dim\H_x$ is constant over $\X^\an$.
 Let now $j\in\Set{0,\dots,n}$ and choose generators $f_{j,1},\dots,f_{j,r_j}\in A$ of
 $\prod_\upsilon\Fitt_j\bar{\omega}_{\H,\upsilon}$;
 observe that these elements define regular functions on $\X^\an$.
 The results follow then from the fact that for every $x\in\X^\an$ we have:
 \[
  v_x\left(\prod_\upsilon(\Fitt_j\bar{\omega}_{\H,\upsilon})_x\right)
   =\min\Set{v_x(f_{j,1}(x)),\dots,v_x(f_{j,r_j}(x))}.
 \]
\end{proof}

\begin{rmk}
 Assume that $F|\Q_p$ is totally ramified and that we are in situation (B).
 Then, in light of Remark~\ref{Hpi}, the above theorem also follows from \cite[Proposition~16]{Fa1}.
\end{rmk}

\begin{rmk}
 Let us outline the meaning of the previous theorem
 first from the point of view of classical rigid analytic geometry,
 then in terms of adic spaces.
 \begin{enumerate}
 \item
 Let $\X^\rig$ denote the generic fibre of $\X$ as a rigid analytic space over $E$
 (see \cite[\S 0.2]{Bert}).
 We can identify $\X^\rig$ with the rigid analytic space associated to
 the Berkovich space~$\X^\an$ (see \cite[\S 1.6]{Be1});
 as a set, it is given by the points~$x\in\X^\an$ such that $\Hr(x)$ is a finite extension of $E$.
 Moreover, the intersection with $\X^\rig$ of any analytic domain of $\X^\an$
 is an open admissible subset of $\X^\rig$.
 Thus, the final statement of Theorem~\ref{cont} means that for every fixed element $f_0\in\R^n_+$,
 the subset~$\X^\rig_{\Hdg^\int\le f_0}\coloneqq\X^\rig\cap\X^\an_{\Hdg^\int\le f_0}\subseteq\X^\rig$
 is open admissible.
 In fact, $\X^\rig$ may be constructed even without the locally finiteness assumption in (A) or (B),
 with any open affine covering of $\X$ yielding an admissible covering of $\X^\rig$.
 Furthermore, every point $x\in\X^\rig$ induces again a map of
 formal schemes $\Spf\O_x\to\X$ over $\Spf\O_E$,
 so that we can define the function~$\Hdg^\int(\H,\iota)$ similarly as above.
 Then, the above statement holds even in this generality,
 with an analogous proof to that of the final statement of Theorem~\ref{cont}.

 \item
 Let $\X^\ad$ denote the adic space associated with $\X^\an$ (see \cite[Remark~8.3.2]{Hu}).
 As topological spaces, we can identify $\X^\an$ with the maximal Hausdorff quotient of $\X^\ad$
 (see loc.\ cit.\ Lemma~8.1.8).
 Then, by composing $\Hdg^\int(\H,\iota)\colon\X^\an\to\R^n_+$ with the quotient map~$\X^\ad\to\X^\an$,
 we obtain by Theorem~\ref{cont} a continuous function~$\Hdg^\int(\H,\iota)\colon\X^\ad\to\R^n_+$.
 Note that, $\R^n_+$ being a Hausdorff topological space,
 it is really equivalent whether we consider $\Hdg^\int(\H,\iota)$ as
 a function from $\X^\ad$ or from $\X^\an$.
 \end{enumerate}
\end{rmk}

\subsection{An example}\label{S-ex}

Let us study here a noteworthy example in the framework of the previous discussion.
Assume that $E$ is discretely valued,
that it contains a square root $\sqrt{p}$ of $p$ and that its residue field~$\k_E$ is perfect.
Let then $F=\Q_p(\pi)$ with $\pi^3=p$, so that $d=e(F|\Q_p)=3$.
We consider the special $\O_E$-algebra~$A=\O_E[[T,U]]/(TU-\sqrt{p})$ and
the affine formal scheme~$\X=\Spf A$ over $\Spf\O_E$.
Then, $\X^\an$ is an open annulus inside the $E$-analytic closed unit disc~$D_E(1)$:
\[
 \X^\an=\Set{x\in D_E(1)|0<v_x(T)<\frac{1}{2}}\subseteq D_E(1)=(\Spf\O_E\{T\})^\an.
\]
We choose the ideal of definition $\I\subseteq\O_\X$ generated by $T,U$ and a uniformiser of $E$,
so that, with notation as before, $\X_0=\Spec\k_E$ corresponds to the reduced special fibre of $\X$.
Let now $H$ be the $p$-divisible group over $\X_0$ whose contravariant Dieudonné crystal
(in the sense of \cite[\S 3.3]{BBM}) evaluated at $W(\k_E)$ is given by:
\[
 D_H=W(\k_E)^6,\qquad
 \phi_H=
 \begin{pmatrix}
 0  & p  & 0  & {} & {} & {} \\
 0  & 0  & p  & {} & \text{\huge{$0$}} & {} \\
 1  & 0  & 0  & {} & {} & {} \\
 {} & {} & {} & 0  & 0  & p  \\
 {} & \text{\huge{$0$}} & {} & 1  & 0  & 0  \\
 {} & {} & {} & 0  & 1  & 0
 \end{pmatrix}\sigma.
\]
Here, $W(\k_E)$ is the ring of Witt vectors with coefficients in $\k_E$ and
$\sigma$ its Frobenius endomorphism.
We let $\pi\in\O_F$ act on $(D_H,\phi_H)$ via the matrix:
\[
 [\pi]=
 \begin{pmatrix}
 0  & 0  & p  & {} & {} & {} \\
 1  & 0  & 0  & {} & \text{\huge{$0$}} & {} \\
 0  & 1  & 0  & {} & {} & {} \\
 {} & {} & {} & 0  & 0  & p  \\
 {} & \text{\huge{$0$}} & {} & 1  & 0  & 0  \\
 {} & {} & {} & 0  & 1  & 0
 \end{pmatrix},
\]
which satisfies $[\pi]\circ\phi_H=\phi_H\circ[\pi]$ and $[\pi]^3=p\cdot\id_{D_H}$.
Denoting by $\iota$ the corresponding $\O_F$-action on $H$,
we obtain an object $(H,\iota)\in\pdiv_{\k_E,\O_F}$, with $\height H=6=2d$.

Consider now the free $A$-module~$D_H\otimes_{W(\k_E)}A=A^6$,
denote by $f_1,\dots,f_6$ the standard basis and let $L$ be the free submodule generated by
$e_1\coloneqq f_2+Tf_4$, $e_2\coloneqq f_3+Tf_5$, $e_3\coloneqq U\sqrt{p}f_1+f_6$.
Observe that the quotient $A^6/L$ is a free $A$-module and that
$L\otimes_A A/\I\subseteq A^6\otimes_A A/\I=D_H/p D_H$ identifies with
the sub-$\k_E$-vector-space~$V_H D_H/p D_H$,
where $V_H$ denotes the Verschiebung of $(D_H,\phi_H)$, i.e.\ $V_H\phi_H=p\cdot\id_{D_H}$.
Then, by Grothendieck-Messing theory,
there exists a $p$-divisible group $\H=(\H_i)_{i\ge0}$ over $\X$ with $\H_0=H$ and $\omega_\H=L$,
where $\omega_\H=\varprojlim_{i\ge0}\omega_{\H_i}$ as in the proof of the previous theorem
(here, we apply \cite[Theorem~V.1.6]{Me} inductively to the thickenings $\X_{i+1}\to\X_i$, for $i\ge0$;
since the defining ideal has square zero,
these thickenings are endowed with the trivial divided power structure).
Note that $L\subseteq D_H\otimes_{W(\k_E)}A$ is stable under the action of $\pi$ that
we obtain extending $[\pi]$ by linearity.
Precisely, the restriction of $[\pi]$ to $L$ is given by the matrix:
\begin{equation}\label{exA}
 \begin{pmatrix}
 0 & 0 & U\sqrt{p} \\
 1 & 0 & 0 \\
 0 & T & 0
 \end{pmatrix}
\end{equation}
with respect to the basis $e_1,e_2,e_3$.
In particular, $\iota$ upgrades to an $\O_F$-action on $\H$,
giving rise to a $p$-divisible group~$(\H,\iota)$ over $\X$ with endomorphism structure for $\O_F$.
Moreover, the induced action of $\pi$ on $\omega_\H=L$ is described by the above matrix.
Using this, we can then compute the function:
\begin{align*}
 \Hdg^\int(\H,\iota)\colon\X^\an &\longrightarrow\R^2_+ \\
 x &\longmapsto\Hdg^\int(\H_x,\iota)=(v_x(U\sqrt{p}),v_x(T))=(1-v_x(T),v_x(T)).
\end{align*}
Observe that for each point~$x\in\X^\an$,
corresponding to the map of formal schemes $\Spf\O_x\to\X$ over $\Spf\O_E$,
the reduction of $(\H_x,\iota)$ to the residue field $\k_x$ of $\O_x$ coincides with
the base change of $(H,\iota)$ along $\k_E\to\k_x$.
In particular, its Hodge polygon is given by:
\[
 \Hdg(H,\iota)=\left(\frac{2}{3},\frac{1}{3}\right),
\]
as it can be seen reducing \eqref{exA} modulo $\I$.
The same observation can be made with regard to the ``Newton polygon'',
for whose definition in this context we refer to \cite[définition~3.1 and Proposition~1.1.9]{BH1}
(it is in fact just a rescaled version of the classical Newton polygon,
see loc.\ cit.\ Remarque~1.1.12).
In our case, up to reversing the order of the slopes to comply with our conventions,
the Newton polygon of $(H,\iota)$, hence of the reduction of $(\H_x,\iota)$ to $\k_x$, is:
\[
 \Newt(H,\iota)=\left(\frac{2}{3},\frac{1}{3}\right).
\]
In conclusion, the family $(\H,\iota)$ provides examples of objects in $\pdiv_{\O_K,\O_F}$
(for suitable complete valued fields $K|E$) whose integral Hodge polygon can be either
above the Hodge polygon and the Newton polygon of their reduction
(for $x\in\X^\an$ with $0<v_x(T)\le1/3$) or below them (for $x\in\X^\an$ with $1/3\le v_x(T)<1/2$).

\noindent
\textsc{Stéphane Bijakowski} \\
Centre de Mathématiques Laurent Schwartz (CMLS), CNRS, \'Ecole polytechnique,
Institut Polytechnique de Paris, 91120 Palaiseau, France \\
E-mail: \href{mailto:stephane.bijakowski@polytechnique.edu}{stephane.bijakowski@polytechnique.edu}

\bigskip\noindent
\textsc{Andrea Marrama} \\
Dipartimento di Matematica ``Tullio Levi-Civita'', Università degli Studi di Padova \\
Via Trieste 63, 35121 Padova, Italy \\
E-mail: \href{mailto:andrea.marrama@math.unipd.it}{andrea.marrama@math.unipd.it}

\end{document}